\newcommand{\pphi}{\|\partial_\xi^2 \hat{\phi}\|_{L^\infty_{\xi \eta}}}
\newcommand{\ha}{\hat{\phi}}
\newcommand{\va}{\varphi}
\newcommand{\les}{\lesssim}
\newcommand{\si}{\sgn(\xi)}
\newcommand{\lan}{\langle \xi \rangle}
\newcommand{\La}{\|\phi\|_{L^2_{r_1,r_2}}}
\newcommand{\lanx}{\langle x \rangle}
\newcommand{\lany}{\langle y \rangle}
\newcommand{\ti}{\tilde {\chi}}   
\newcommand{\li}{L^\infty_{\xi \eta}}
\newcommand{\D}{D^{a}}
\newcommand{\Dta}{\mathcal{D}^{\alpha}_\xi}
\newcommand{\h}{\mathcal H}
\newcommand{\la}{\langle x \rangle_{N}}
\newcommand{\Dt}{\mathcal{D}^{\theta}_\xi}
\newcommand{\dt}{D^{\alpha}_\xi}
\newcommand{\dte}{D^{\theta}_\xi}                                      
\newcommand{\R}{\mathbb R}
\newcommand{\Z}{\mathbb Z}
\newcommand{\p}{\partial}
\newcommand{\sgn}{\text{sgn}}
\numberwithin{equation}{section}
\newtheorem{theorem}{Theorem}[section]
\newtheorem{proposition}[theorem]{Proposition}
\newtheorem{lemma}[theorem]{Lemma}
\newtheorem{claim}[theorem]{Claim}
\begin{document}
\vglue-1cm \hskip1cm
\title[The gBO-ZK equation in weighted spaces]{Persistence properties for the dispersion generalized BO-ZK equation in weighted anisotropic Sobolev spaces}

%\author[]{}
%\address{}
%\email{}
%\thanks{}
%\begin{abstract}
%\end{abstract}

%\maketitle

\author[A. Cunha]{Alysson Cunha}
%    Address of record for the research reported here
\address{Instituto de Matem\'atica e Estat\'istica(IME).
Universidade Federal de Goi\'as(UFG), Campus Samambaia, 131, 74001-970, Goi\^ania, Bra\-zil}
\email{alysson@ufg.br}
%\thanks will become a 1st page footnote.
%\thanks{The second author was partially supported by CNPq/Brazil.}
%. The third author was supported by a NSF grant.}

\author[A. Pastor]{Ademir Pastor}
\address{IMECC-UNICAMP, Rua S\'ergio Buarque de Holanda, 651, 13083-859, Cam\-pi\-nas-SP, Bra\-zil}
\email{apastor@ime.unicamp.br}
%%%%%%%%%%%%%%%%%%%%%%%%%%%%%%%%%%%%%%%%%%%%%%%%%%%%%%%%%%%%%%%%%%%%%%%%%%%%%%%%%%%%%%%%%%%%%%%%

\subjclass[2010]{35A01, 35B60, 35Q53, 35R11}

\keywords{gBO-ZK equation, Initial-value problem, Well-posedness, Persistence, Weighted spaces}

\begin{abstract}
In this paper we study  the initial-value problem associated with the dispersion
generalized-Benjamin-Ono-Zakharov-Kuznetsov equation,
$$
u_{t}+D^{a+1}_x \partial_{x}u+u_{xyy}+uu_{x}=0, \qquad a\in(0,1).
$$
 More specifically, we study the persistence property of the solution  in the weighted anisotropic Sobolev spaces
$$
H^{(1+a)s,2s}(\R^{2})\cap L^{2}((x^{2r_1} +y^{2r_2})dxdy),
$$
for appropriate $s$, $r_1$ and $r_2$. By establishing unique continuation properties we also show that our results are sharp with respect to the decay in the $x$-direction.
\end{abstract}
 
\maketitle

\section{Introduction}\label{introduction}

This paper is concerned with the initial-value problem (IVP) associated with the two-dimensional dispersion
generalized-Benjamin-Ono-Zakharov-Kuznetsov (gBO-ZK) equation,
\begin{equation}\label{gbozk}
\begin{cases}
u_{t}+D^{a+1}_x \partial_{x}u+u_{xyy}+uu_{x}=0, \;\;(x,y)\in\R^2, \;t>0, \quad a \in [0,1], \\
u(x,y,0)=\phi(x,y),
\end{cases}
\end{equation}
where $D^{a+1}_x$ stands for the fractional derivative of order $a+1$ with respect to the variable $x$ and is defined, via Fourier transform, as $D^{a+1}_x f(x,y)=(|\xi|^{a+1}\widehat{f})^\vee(x,y)$.

In the limiting case $a=1$, equation in \eqref{gbozk} becomes the Zakharov-Kuznetsov (ZK) equation
\begin{equation}\label{zk}
u_{t}+\partial_{x} \Delta u+uu_{x}=0, 
\end{equation}
while for $a=0$ it reduces to the Benjamin-Ono-Zakharov-Kuznetsov (BO-ZK) equation
\begin{equation}\label{bozk}
u_{t}+\mathcal{H}\partial_x^2 u+u_{xyy}+uu_{x}=0, 
\end{equation}
 where $\h$ denotes the Hilbert transform in the $x$-variable. Equations in \eqref{zk} and \eqref{bozk} appear in physical application. Indeed, the ZK equation was first derived in \cite{ZK} and it models the propagation of nonlinear ion-acoustic waves in magnetized  plasma (see also \cite{LLS} for a rigorous derivation in the long-wave limit of the Euler-Poisson system). On the other hand, the BO-ZK equation was introduced in \cite{Jorge} and \cite{Latorre} and it has
 applications to thin nanoconductors on a dielectric substrate.
 
 From the mathematical viewpoint, equation in \eqref{gbozk} may be seen as a two-dimensional extension of the dispersion generalized Benjamin-Ono equation,
\begin{equation}\label{dgbo}
u_{t}+D^{a+1}_x \partial_{x}u+uu_{x}=0,
\end{equation}
 in much the same way  ZK and BO-ZK equations may be seen as two-dimensional versions of the well-known Korteweg-de Vries and Benjamin-Ono equations, respectively.
 
 Both ZK and BO-ZK equations have been extensively studied in the last two decades. In the next paragraphs we recall some results concerning the well-posedness in weighted Sobolev spaces and which are close to the main issue of this manuscript. Here and throughout the paper by well-posedness we mean in Kato's sense, that is, it includes existence, uniqueness, persistence (if the initial data belongs to some function space $X$ then there exists a unique solution that also belong to $X$) and continuous dependence upon the initial data. In addition, if these properties hold in a small time interval we say the IVP is locally well-posed; on the other, if the properties hold for all $t>0$ we say that the IVP is globally well-posed.  Concerning the ZK equation, the IVP in weighted spaces was studied in \cite{BUM} and \cite{FP}.  In  \cite{BUM} the authors proved the local well-posedness in the isotropic space $H^s(\R^2)\cap L^2((1+x^2+y^2)^{s/2}dxdy)$, $s>3/4$; in their proof they took the advantage of change of variables introduced in \cite{GHerr} in order to explore the symmetric form of \eqref{zk}. On the other hand, in \cite{FP} the authors proved the local well-posedness in the anisotropic spaces $H^s(\R^2)\cap L^2((1+|x|^{2r_1}+|y|^{2r_2})dxdy)$, where $s>3/4$ and $r_1,r_2>0$ are such that $\max\{r_1,r_2\}\leq s/2$. Their proof is a little bit different from the one \cite{BUM}; the main tool is a commutator estimate between weights and the linear group associated with \eqref{zk}. In addition, their method also extend to the generalized nonlinearity $u^ku_x$, $k\geq2$.
 
Concerning the BO-ZK equation, local well posedness in weighted spaces was studied in \cite{AP} from several viewpoints. First the authors proved local well-posedness in $H^s(\R^2)\cap L^2(w^2dxdy)$, $s>2$, provided $w=w(x,y)$ is a weight with bounded derivatives up to order three. In addition, if $r\in(1,5/2)$ and $s\geq2r$ then local well-posedness holds in $\mathcal{Z}_{s,r}:=H^s(\R^2)\cap L^2((1+x^2+y^2)^rdxdy)$. Also, if $r\in[5/2,7/2)$ then local well-posedness in $\mathcal{Z}_{s,r}$ holds provided the initial data $\phi$ is such that $\widehat{\phi}(0,\eta)=0$, for any $\eta\in\R$, where the hat stands for the Fourier transform; in this case, as long as the solution exists it also satisfies $\widehat{u}(0,\eta,t)=0$. These results were shown to be sharp in the sense that a sufficiently smooth nontrivial solution do not persist in $ L^2((1+x^2+y^2)^{7/2}dxdy)$. For recent results concerning local well-posedness in the standard  Sobolev spaces we refer the reader to \cite{APlow} and \cite{Nascimento}.

Another model that extends \eqref{dgbo} to a two-dimensional model is the so-called fractional Zakharov-Kuznetsov equation
\begin{equation}\label{fzk}
u_{t}+D^{a+1} \partial_{x}u+uu_{x}=0,  \quad a\in[0,1],
\end{equation}
where now $D^{a+1}$ is the operator defined  in Fourier variables as $\widehat{D^{a+1}f}(\xi,\eta)=(\xi^2+\eta^2)^{(a+1)/2}\widehat{f}(\xi,\eta)$, which has been studied very recently. By using the short-time Strichartz method introduced in \cite{KT} to deal with the Benjamin-Ono equation the authors in \cite{HLRKW} considered $a=0$ and established local well-posedness in $H^s(\R^2)$, $s>5/3$. They also proved an ill-posedness result in the sense that the data-to-solution map cannot be $C^2$-differentiable from $H^s(\R^2)$ to $H^s(\R^2)$, for any $s\in\R$. The local well-posedness was extended to $0\leq a<1$ in \cite{Schippa} where, by using transversality and localization of time to small frequency dependent time intervals, the author showed the local well-posedness in $H^s(\R^2)$, $s>3/2-a$. In weighted spaces, local well-posedness was studied in \cite{riano} only for $a=0$. In particular it was shown that local well-posedness in $\mathcal{Z}_{s,r}$ holds for $s\geq r$ and $r\in[0,3)$ (with $s>5/3$); if $r\in[3,4)$ then local well-posedness in the same space holds provide the initial data also satisfies $\widehat{\phi}(0,0)=0$. These results are sharp in the sense that no nontrivial solutions persist in $\mathcal{Z}_{4,4}$.

 The IVP \eqref{gbozk} in anisotropic Sobolev spaces $ H^{(1+a)s,2s}(\R^2)$ was studied in \cite{ribaud}. For future references we quote their result in  next theorem.

\vskip.2cm

\noindent {\bf Theorem A.}  Let $E^s=H^{(1+a)s,2s}(\R^2)$.

(a)  Assume $a\in[0,1]$ and $s>\frac{2}{a+1}-\frac{3}{4}$. Then \eqref{gbozk} is locally well-posed in $E^s$.

(b) Assume $a\in(3/5,1]$ and $s=1/2$. Then \eqref{gbozk} is globally well-posed in $E^{1/2}$.
\vskip.2cm

To prove part (a) in Theorem A the authors used the method introduced in \cite{IKT}, which combines the energy method with linear and nonlinear estimates in the short-time Bourgain spaces. Part (b) may be proved taking the advantage of the conservation of the quantities
$$
\int_{\R^2} \left(  \left|D_x^{\frac{a+1}{2}}u\right|^2+u_y^2-\frac{1}{3}u^3 \right)dxdy \quad \mbox{and} \quad \int_{\R^2}u^2\,dxdy
$$
to obtain an priori bound for the local solution.

Let us now turn attention to the results in the present paper. Our purpose here is to extend the  well-posedness  results of Theorem A to anisotropic weighted spaces. Thus our main goal is to establish the persistence property in $ L^2((1+|x|^{2r_1}+|y|^{2r_2})dxdy)$ for appropriate $r_1,r_2\geq0$. As we pointed out above, the cases $a=0$ and $a=1$ have already been treated in the literature. So, we will restrict our attention to the case $a\in(0,1)$; to the best of our knowledge this case has not been treated.

Our first result reads as follows (see next section for the definition of the function spaces).

\begin{theorem}\label{anisogbozk}	Let $a\in (0,1)$ and $r_1,r_2\geq0$. Assume
\[
s>\frac{2}{a+1}-\frac{3}{4} \qquad \mbox{and} \qquad s\geq \frac{2r_2}{1+a}.
\] 
The following statements are true.
	\begin{enumerate}
	\item [1)] If $r_1\in [0,1]$ and $s\geq 1$, then the IVP \eqref{gbozk} is locally well-posed in $\mathrm{Z}^{s}_{r_1,r_2}$.
	\item[2)] If $r_1\in(1,2]$ and $s\geq r_1+\frac{1}{1+a}$, then the IVP \eqref{gbozk} is locally well-posed in $\mathrm{Z}^{s}_{r_1,r_2}$.
	
\item [3)] If $r_1 \in (2,5/2+a)$, $s\geq r_1+\frac{1}{1+a}$ and $r_2>2$, then the IVP \eqref{gbozk} is locally well-posed in $\mathrm{Z}^{s}_{r_1,r_2}$.
		
		\item[4)] If $r_1\in [5/2+a,7/2+a)$, $s\geq r_1+\frac{1}{1+a}$ and $r_2>3$, then the IVP \eqref{gbozk} is locally well-posed in $\dot{\mathrm{Z}}_{r_{1},r_{2}}^{s}$.
\end{enumerate}
In addition the time interval where the solution exists is the same as in Theorem A.
\end{theorem}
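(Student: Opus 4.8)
The plan is to combine the local well-posedness in the unweighted spaces $E^s$ from Theorem A with a sequence of weighted energy estimates, treating the $x$-weight and $y$-weight essentially independently. The $y$-direction is the easy one: since the dispersion in $y$ is a pure second-order term $u_{xyy}$ (no fractional operator), the commutator of $\langle y\rangle_N^{\theta}$ (a truncated weight) with the linear propagator behaves like in the ZK case treated in \cite{FP}, and the condition $s\ge 2r_2/(1+a)$ is exactly what is needed for the interpolation between the regularity $H^{(1+a)s,2s}$ and the weight $|y|^{2r_2}$ to close (note $2s$ derivatives in $y$ pair against $r_2$ powers of $y$, and the factor $(1+a)$ enters through the anisotropic scaling). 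So the bulk of the argument, and the source of the case distinctions $1)$–$4)$, is the $x$-weight, which interacts with the fractional dispersive operator $D_x^{a+1}\partial_x = D_x^{2+a}\,\mathrm{sgn}(\xi)\cdot(\text{stuff})$; more precisely the symbol is $i\xi|\xi|^{1+a}$, whose derivative in $\xi$ has order $1+a$, so each $x$-derivative of the weight that falls on the propagator costs $1+a$ derivatives of $u$. This is why the thresholds come in steps of size $1$ in $r_1$ but the regularity cost is $r_1+\frac{1}{1+a}$ once $r_1>1$.

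\smallskip

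\noindent\textbf{Step 1: Truncated weights and a priori estimates.} I would introduce, for $N>0$, smooth truncated weights $\langle x\rangle_N$ (bounded, equal to $\langle x\rangle$ for $|x|\le N$, with derivatives uniformly bounded in $N$) and run the energy method on $\langle x\rangle_N^{r_1}\langle y\rangle_N^{r_2} u$ together with the regularity $E^s$. Differentiating in time and using the equation, the terms to control are: (i) the commutator of the weight with the linear part $\partial_t + D_x^{a+1}\partial_x + \partial_x\partial_y^2$, and (ii) the contribution of the nonlinearity $uu_x$. For (i) the key is a commutator/interpolation lemma (the $x$-analogue of the ones in \cite{FP}, \cite{AP}): writing $\langle x\rangle_N^{r_1}$ via its Fourier multiplier $\mathcal D_\xi^{r_1}$, one shows that $[\mathcal D_\xi^{r_1}, e^{it\xi|\xi|^{1+a}}]$ applied to a function is controlled, after using the pointwise bound on $\partial_\xi^{r_1}$ of the phase (which forces the fractional-derivative gain $(1+a)r_1$ minus lower order, heuristically), by $\langle t\rangle$ times $\|u\|$ in a space with $\approx (1+a)(r_1-1)+1$ more $x$-derivatives — this is where the requirement $s\ge r_1+\frac{1}{1+a}$ (equivalently $(1+a)s \ge (1+a)r_1+1$) enters. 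For (ii), $\|\langle x\rangle_N^{r_1}\langle y\rangle_N^{r_2}(uu_x)\|_{L^2}$ is handled by Leibniz: distribute the weight, use that $u\in E^s$ with $s$ large enough that $L^\infty$ and algebra estimates apply, and absorb the worst piece $u\cdot(\langle x\rangle_N^{r_1}\langle y\rangle_N^{r_2} u_x)$ by integration by parts (the usual $\frac12\partial_x$ trick), which costs one more $x$-derivative on the weighted quantity and is why item $2)$ needs $s\ge r_1+\frac1{1+a}$ rather than just $s\ge r_1$.

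\smallskip

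\noindent\textbf{Step 2: Obstructions and the homogeneous space for large $r_1$.} When $r_1$ crosses $5/2+a$, the same phenomenon as in \cite{AP}, \cite{riano} occurs: $\langle x\rangle^{r_1}$ applied to the Duhamel integral of the linear flow produces a term behaving like $t\,\widehat{\phi}(0,\eta)$ (from $\partial_\xi$ hitting the phase enough times to produce $\xi^{-1}$-type singularities at $\xi=0$), which is not in $L^2$ unless $\widehat{\phi}(0,\eta)=0$; hence items $3)$ and $4)$ require $r_2>2$, resp.\ $r_2>3$, to give meaning to $\widehat{u}(0,\eta,t)$ (so that the trace at $\xi=0$ is a well-defined $L^2_\eta$ or $H^1_\eta$ function by Sobolev embedding in $y$), and item $4)$ is stated in the homogeneous space $\dot{\mathrm Z}^s_{r_1,r_2}$ where that vanishing condition is built in — one must also check it is propagated by the flow, which follows by evaluating the equation at $\xi=0$: $\partial_t\widehat u(0,\eta,t) = -\widehat{uu_x}(0,\eta,t)=0$ since $\widehat{uu_x}(0,\eta)=\frac{i\xi}{2}\widehat{u^2}\big|_{\xi=0}=0$.

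\smallskip

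\noindent\textbf{Step 3: Passing to the limit $N\to\infty$ and upgrading to well-posedness.} With uniform-in-$N$ bounds on $\|\langle x\rangle_N^{r_1}\langle y\rangle_N^{r_2} u(t)\|_{L^2}$ on the Theorem~A time interval, Fatou's lemma gives $u(t)\in L^2((|x|^{2r_1}+|y|^{2r_2})dxdy)$, so persistence holds; uniqueness is inherited from Theorem~A, and continuous dependence follows from the same energy estimates applied to differences of solutions (here one uses that the weighted norm is quasi-subadditive and the estimates are at worst quadratic in the relevant norms, via a Bona–Smith-type argument). \textbf{The main obstacle} I anticipate is Step~1(i): obtaining the sharp commutator estimate for the fractional propagator, i.e.\ controlling $\mathcal D_\xi^{r_1}\big(e^{it\xi|\xi|^{1+a}}\widehat v\big)$ with the claimed regularity loss exactly $r_1+\frac{1}{1+a}$ and with polynomial-in-$t$ constants, uniformly for non-integer $r_1$ and across the range $a\in(0,1)$ — this requires a careful Leibniz-type rule for the fractional operator $\mathcal D_\xi^{r_1}$ acting on the product $e^{it\phi(\xi)}\widehat v(\xi,\eta)$, handling the non-smoothness of $|\xi|^{1+a}$ at the origin and the interplay between the $\xi$-localization and the $\eta$-variable, and is the technical heart that replaces the simpler KdV/ZK computations.
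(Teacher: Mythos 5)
Your high-level skeleton agrees with the paper in several places (local theory from Theorem A so that only persistence in $L^2_{r_1,r_2}$ must be proved, truncated weights and a Gronwall argument for the $y$-direction, the threshold $r_1=5/2+a$ tied to $\hat\phi(0,\eta)=0$ and its propagation via $\partial_t\hat u(0,\eta,t)=0$), but there is a genuine gap in how you treat the $x$-weights for $r_1>1$. Your Step 1 conflates two different schemes: an energy estimate for $\frac{d}{dt}\|\langle x\rangle_N^{r_1}u\|^2$ (where no propagator appears and the relevant object is the commutator $[\langle x\rangle_N^{r_1},D_x^{1+a}\partial_x]$) and a Duhamel-type bound involving $[\mathcal D_\xi^{r_1},e^{it\xi|\xi|^{1+a}}]$. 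The paper runs the differential-equation/energy argument only for $r_1\in[0,1]$, where a single power of the weight can be handled by Calder\'on-type commutators (Theorem \ref{Comu}, Proposition \ref{ComuDerivada}), and explicitly states that this process cannot be reiterated for $r_1>1$; for all of Parts 2)--4) it switches to the integral equation and estimates $\||x|^{r_1}U(t)\phi\|$ directly on the Fourier side. You never explain how your unified weighted-energy identity would close for fractional $r_1>1$, and the "main obstacle" you defer -- controlling $\mathcal D_\xi^{r_1}(e^{it\xi(\eta^2-|\xi|^{1+a})}\hat v)$ with the stated loss -- is precisely the bulk of the proof: the explicit expansions \eqref{F1}--\eqref{F5}, Lemma \ref{DF}, Lemma \ref{Dchip}, the Stein-derivative characterizations (Propositions \ref{Dstein}, \ref{DsteinL2}, \ref{DsteinL3}), a case splitting in $a\lessgtr 1/2$, and in Part 4) the Taylor expansion \eqref{taylor} exploiting $\hat\phi(0,\eta)=0$ to kill the singular symbols $|\xi|^{a-1}$, $|\xi|^{a-2}$ at $\xi=0$. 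Without this analysis the theorem is not established, and the claimed uniform commutator lemma is not an off-the-shelf fact.

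Two further inaccuracies in the sketch: the hypotheses $r_2>2$ (Part 3) and $r_2>3$ (Part 4) are not there "to give meaning to $\hat u(0,\eta,t)$"; they are used, together with $r_1>2$ (resp.\ $r_1>3$), to bound $\|\hat\phi\|_{L^\infty_{\xi\eta}}$, $\|\partial_\xi\hat\phi\|_{L^\infty_{\xi\eta}}$ and $\|\partial_\xi^2\hat\phi\|_{L^\infty_{\xi\eta}}$ by $\|\langle x\rangle^{r_1}\phi\|+\|\langle y\rangle^{r_2}\phi\|$ via Sobolev embedding in the Fourier variables, which is what makes the estimates of the singular terms ($K_1$, $E_1$, $G_6$, etc.) close. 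Likewise, the obstruction at $r_1=5/2+a$ is not a "$\xi^{-1}$-type" term producing $t\hat\phi(0,\eta)$: it is the term $F_1\sim t\,\sgn(\xi)|\xi|^{a}\hat\phi$ in $\partial_\xi^2(\psi\hat\phi)$, and the failure/success is decided by the sharp criterion $\mathcal D_\xi^\theta(|\xi|^a\sgn(\xi)\varphi)\in L^2$ iff $\theta<a+1/2$, which is also the mechanism behind Theorem \ref{P1}. These are fixable misstatements, but as written they do not supply the missing quantitative core of Parts 2)--4).
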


Our arguments to prove Theorem \ref{anisogbozk} are inspired in the ones presented in \cite{FLP1}, where the authors proved the well-posedness in weighted spaces for the dispersion generalized BO equation \eqref{dgbo}. Since we are dealing with a two-dimensional model,  the arguments do not follow directly from \cite{FLP1} and we need to deal with many additional terms in the necessary estimates. Due to the nonlocal operator $D^{a+1}_x$,  the most difficult part relies on the estimates when the weights are set in the $x$-direction and we need to play with several product and commutator estimates.

Some remarks concerning the assumptions of Theorem \ref{anisogbozk} are in order. First of all, the condition $s>\frac{2}{a+1}-\frac{3}{4}$  appears in order to have the local well-posedness according to Theorem A, in such a way we spend our efforts  to show the persistence property in the weighted space. The conditions $s\geq1$  and $s\geq r_1+\frac{1}{1+a}$ are used to bound the solution in the resolution space $E^s$. Probably the regularity $s$ may be pushed down to $(1+a)s\geq 2\max\{r_1,r_2\}$, which agrees with the case $a=1$ as described above; however, our strategy do not allow us to achieve this index. In addition, since we use Sobolev's embedding in Fourier variables to estimate some terms, this give rise to the assumptions $r_2>2$ in part 3) and $r_2>3$ in part 4).

Note that part 4) in Theorem \ref{anisogbozk} establishes the well-posedness in $\dot{\mathrm{Z}}_{r_{1},r_{2}}^{s}$, which means that the initial data satisfies $\widehat{\phi}(0,\eta)=0$, for any $\eta\in\R$. Next theorem shows that this is a necessary condition to have local well-posedness in the  following sense: if a sufficiently smooth solution has a decay of order $5/2+a$ in the $x$-direction then the initial data satisfies $\widehat{\phi}(0,\eta)=0$, for any $\eta\in\R$.

\begin{theorem}\label{P1}
	Let $u\in C([0,T]; \mathrm{Z}_{r_1,r_2}^{s})$ be a solution of the IVP
	\eqref{gbozk}, where $a \in (0,1)$, $s\geq 2$ and $r_1,r_2>2$.
	If there exist two different times $t_1, t_2 \in [0,T]$ such that $u(t_j)\in \mathrm{Z}_{5/2+a,r_2}^{s}, \ j=1,2, \ $ then $$\hat{u}(0,\eta,t)=0,$$
for any $\eta\in \R$	and any $t\in[0,T]$.
\end{theorem}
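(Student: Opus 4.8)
The plan is to exploit the conservation law coming from integrating the equation in $x$ against a suitable test function and to convert the decay hypothesis into an integrability statement that forces the zeroth Fourier mode in $x$ to vanish. Concretely, write $g(\eta,t):=\widehat{u}(0,\eta,t)$. Taking the Fourier transform of \eqref{gbozk} in both variables and evaluating at $\xi=0$, the dispersive terms $D^{a+1}_x\partial_x u$ and $u_{xyy}$ contribute symbols $i\,\mathrm{sgn}(\xi)|\xi|^{a+1}$ and $-i\xi\eta^2$, each of which vanishes at $\xi=0$; hence
\begin{equation}\label{P1eq1}
\partial_t \widehat{u}(0,\eta,t) = -\tfrac{1}{2}\,\widehat{\partial_x(u^2)}(0,\eta,t) = -\tfrac{i\xi}{2}\,\widehat{u^2}(\xi,\eta,t)\Big|_{\xi=0}=0,
\end{equation}
provided $\widehat{u^2}(\cdot,\eta,t)$ is continuous near $\xi=0$, which is guaranteed because $u(t)\in \mathrm{Z}^s_{r_1,r_2}$ with $r_1>2$ makes $u(t)^2$ integrable enough that $\xi\mapsto\widehat{u^2}(\xi,\eta,t)$ is $C^1$ in $\xi$ near the origin. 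So $g(\eta,\cdot)$ is constant in $t$; equivalently $\widehat{u}(0,\eta,t)=\widehat{\phi}(0,\eta)$ for all $t\in[0,T]$.

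The second and decisive step is to show this constant is zero. Here I would use the extra decay at the two times $t_1,t_2$ together with the dispersive smoothing hidden in the equation. Integrate \eqref{gbozk} in $x$ over $\R$: writing $I(y,t):=\int_\R u(x,y,t)\,dx$ (which makes sense since $r_1>2>1/2$), the terms $u_{xyy}$ and $uu_x=\frac12\partial_x(u^2)$ integrate to zero, while the nonlocal term yields $\int_\R D^{a+1}_x\partial_x u\,dx$. The point is that $\int_\R D^{a+1}_x\partial_x u(x,y,t)\,dx$ does \emph{not} vanish in general: writing it via the Fourier transform in $x$, it equals $\lim_{\xi\to0} i\,\mathrm{sgn}(\xi)|\xi|^{a+1}\widehat{u}^{x}(\xi,y,t)=0$ only if $\widehat{u}^x(\cdot,y,t)$ is bounded near $\xi=0$ — which again follows from $u(t)\in L^2(|x|^{2r_1}dx\,dy)$. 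Thus $\partial_t I(y,t)=0$ as well, so $I(y,t)=I(y,0)$. This, however, is just the $\eta$-Fourier transform of the previous identity, so it gives no new information; the genuine gain must come from the \emph{additional} decay $u(t_j)\in \mathrm{Z}^s_{5/2+a,r_2}$. The mechanism — exactly as in \cite{AP} and \cite{riano} — is that the nonlocal operator $D^{a+1}_x\partial_x$ applied to a function with more than $(5/2+a)$ decay produces, after integration in time, a term behaving like $|x|^{-(5/2+a)}$ near $x=\pm\infty$ times a nonzero multiple of $\widehat{u}(0,\eta,t)$; if this is to be consistent with $u(t_2)$ having decay of order $5/2+a$, the coefficient $\widehat{u}(0,\eta,\cdot)$ must vanish.

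More precisely, I would run the following argument. Apply the weight: the Duhamel formula gives, for $t$ between $t_1$ and $t_2$,
\begin{equation}\label{P1eq2}
u(t) = U(t-t_1)u(t_1) - \int_{t_1}^t U(t-t')\,\partial_x(u^2/2)(t')\,dt',
\end{equation}
where $U(t)=\exp(-t(D^{a+1}_x\partial_x+\partial_x\partial_y^2))$. The key is the action of the group on weights: one has an identity of the form $|x|^{\rho}U(t)f = U(t)\big(|x|^\rho f\big) + \text{(commutator terms)}$, and because the symbol $\mathrm{sgn}(\xi)|\xi|^{a+1}$ is only $C^{2+a-\varepsilon}$ at $\xi=0$, differentiating it $3$ times (as needed to reach the weight $|x|^{5/2+a}$, i.e. the Sobolev-type order $r_1\in[5/2+a,7/2+a)$) produces a term proportional to $\delta_0''$-type singularity whose inverse Fourier transform is a fixed nonzero multiple $c_a$ of $|x|^{-(5/2+a)}$ paired against $\widehat{u}(0,\eta,t)$. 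Carrying this through, $|x|^{5/2+a}u(t_2)\in L^2$ combined with \eqref{P1eq2} forces
\begin{equation}\label{P1eq3}
c_a\,(t_2-t_1)\,\widehat{u}(0,\eta,\cdot)\ \in\ L^2_\eta \quad\text{only if}\quad \widehat{u}(0,\eta,\cdot)\equiv 0,
\end{equation}
since a nonzero constant-in-$x$ contribution times $|x|^{-(5/2+a)}$ fails to lie in the required weighted space at the $L^2$ level (the borderline exponent $5/2+a$ is exactly where $|x|^{5/2+a}\cdot|x|^{-(5/2+a)}=1\notin L^2(\mathbb R)$). Combined with step one, $\widehat{u}(0,\eta,t)=\widehat{u}(0,\eta,t_1)=0$ for all $t\in[0,T]$, which is the claim.

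The main obstacle I anticipate is making the third step rigorous: one must carefully extract, from the composition of the linear group $U(t)$ with the weight $|x|^{5/2+a}$, the precise singular contribution of the low-frequency part of the symbol $\mathrm{sgn}(\xi)|\xi|^{a+1}$, controlling all the remaining (regular) commutator terms in $L^2$ using the hypotheses $s\ge2$, $r_1,r_2>2$, and $u\in C([0,T];\mathrm{Z}^s_{r_1,r_2})$. This requires a careful fractional-Leibniz / Calderón-type commutator analysis adapted to the anisotropic group — essentially the two-dimensional analogue of the computation in \cite{FLP1} and \cite{AP}, and where the non-integer order $a$ makes the bookkeeping of the singular term delicate. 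The $y$-weight plays only an auxiliary role (it guarantees the needed integrability in $y$ via $r_2>2$ and Sobolev embedding in Fourier variables), so the heart of the matter is entirely in the $x$-direction.
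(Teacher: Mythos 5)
Your overall route is the paper's route: the constancy of $\hat u(0,\eta,t)$ (your Step 1 is exactly \eqref{consquan}--\eqref{fourieru}), then the Duhamel formula weighted by $|x|^{5/2+a}$, with the conclusion drawn from a borderline failure of square integrability of the one low-frequency term proportional to $\hat\phi(0,\eta)$. However, the decisive third step is only announced, and the heuristic you offer for it is quantitatively wrong. Differentiating the symbol $\xi|\xi|^{1+a}$ twice (resp.\ three times) in $\xi$ produces $\sgn(\xi)|\xi|^{a}$ (resp.\ $|\xi|^{a-1}$); for $a\in(0,1)$ these are locally integrable, so no Dirac-delta type singularity arises, and the tail of the solution generated by $\hat\phi(0,\eta)\neq0$ is of size $|x|^{-(3+a)}$, not $|x|^{-(5/2+a)}$. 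Indeed, if the tail were $|x|^{-(5/2+a)}$, persistence of the weight $|x|^{r_1}$ would already fail for $r_1\geq 2+a$, contradicting part 3) of Theorem \ref{anisogbozk}; it is the exponent $3+a$ that makes $5/2+a$ the threshold, since $|x|^{5/2+a}\cdot|x|^{-(3+a)}=|x|^{-1/2}$ just fails to be square integrable at infinity.

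What is missing is precisely the analytic core of the paper's proof. One writes $|x|^{5/2+a}$ as $|x|^{\alpha}x^{2}$ with $\alpha=a+\tfrac12$ when $a<\tfrac12$ (resp.\ $|x|^{\alpha}|x|^{3}$ with $\alpha=a-\tfrac12$ when $a\geq\tfrac12$), passes to the Fourier side via $D_\xi^{\alpha}\partial_\xi^{2}$ (resp.\ $D_\xi^{\alpha}\partial_\xi^{3}$), expands $\partial_\xi^{2}(\psi\hat\phi)$, $\partial_\xi^{3}(\psi\hat\phi)$ as in \eqref{F2}--\eqref{F3}, and then proves that \emph{every} term, both in the linear part and in the Duhamel integral, except the one carrying $\sgn(\xi)|\xi|^{a}\hat\phi(0,\eta)$ (resp.\ $|\xi|^{a-1}\hat\phi(0,\eta)$), lies in $L^{2}$ under the hypotheses $s\geq2$, $r_1,r_2>2$; this occupies Claims \ref{claim1}--\ref{claim2}-type estimates and uses the commutator bound of Proposition \ref{C}, Lemma \ref{Dchip}, interpolation, and Kato--Ponce estimates for $\partial_x u^{2}$. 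Finally, evaluating at $t=t_2$, Fubini together with the Stein-derivative characterization (Theorem \ref{stein}) and the sharp facts $\mathcal D^{a+1/2}\big(\sgn(\xi)|\xi|^{a}\varphi\big)\notin L^{2}$ and $\mathcal D^{a-1/2}\big(|\xi|^{a-1}\varphi\big)\notin L^{2}$ (Propositions \ref{Dstein} and \ref{DsteinL2}) forces $\hat\phi(0,\eta)=0$ a.e. Your proposal defers all of this (``the main obstacle I anticipate''), makes no case distinction at $a=\tfrac12$ (which changes both the number of $\xi$-derivatives and which proposition supplies the non-$L^2$ statement), and offers no substitute tool for the Stein-derivative computations; as written it is a plausible plan whose key step, in the form stated, would not go through.
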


Having Theorem \ref{anisogbozk} in hand, a natural question is what happens if $r_1\geq 7/2+a$. Next theorem establishes that a nontrivial sufficiently smooth solution cannot have such a decay in the $x$-direction. In particular local well-posedness is not expected in $\mathrm{Z}^{s}_{r_1,r_2}$, for $r_1\geq 7/2+a$.

\begin{theorem}\label{UCP} Let $u\in C([0,T]; \mathrm{Z}_{r_1,r_2}^{s})$ be a solution of the IVP
	\eqref{gbozk}, where $a \in (0,1)$, $s\geq 4$ and $r_1,r_2>3$. If there exist three different times $t_1, t_2, t_3\in [0,T]$ such
	that $u(t_j)\in \mathrm{Z}_{7/2+a,r_2}^{s}, \ j=1,2,3$, then $$u(t)\equiv 0,$$
	for any $t\in[0,T]$.
\end{theorem}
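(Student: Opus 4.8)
The plan is to reduce the three-time unique continuation statement to the two-time statement of Theorem \ref{P1} together with a Cauchy-type argument on the quantity $\widehat{u}(0,\eta,t)$. First I would note that since $u(t_j)\in \mathrm{Z}_{7/2+a,r_2}^{s}\subset \mathrm{Z}_{5/2+a,r_2}^{s}$ for $j=1,2,3$, Theorem \ref{P1} applies (using $t_1,t_2$) and yields $\widehat{u}(0,\eta,t)=0$ for all $\eta\in\R$ and all $t\in[0,T]$. Equivalently, $\int_{\R} u(x,y,t)\,dx=0$ for a.e. $y$ and every $t$, and in particular the function $x\mapsto u(x,y,t)$ has a primitive that decays at $\pm\infty$; this is exactly the extra cancellation that makes the weight $|x|^{7/2+a}$ borderline admissible. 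The strategy then is to differentiate in $t$ the $L^2$-type weighted norm (or rather a suitable pairing) involving the weight $|x|^{2(5/2+a)}=|x|^{5+2a}$, exploiting the equation, and to extract from the three times $t_1,t_2,t_3$ a contradiction unless $u\equiv 0$.

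The key steps, in order, would be: (i) justify rigorously, via the truncated weights $\langle x\rangle_N^{\theta}$ used throughout the paper and the linear estimates behind Theorem \ref{anisogbozk}, that $u$ can be paired against $x^{5+2a}$ and that the resulting time derivative can be computed using the equation $u_t=-D^{a+1}_x\partial_x u-u_{xyy}-uu_x$; (ii) compute $\frac{d}{dt}\int_{\R^2} x^{5+2a}\,u^2\,dxdy$ (or the analogue with one power of the weight and a pairing with a fixed function), integrating by parts in $x$ and $y$. The local terms $u_{xyy}$ and $uu_x$ contribute, after integration by parts, lower-order weighted quantities controlled by $\|u(t)\|_{\mathrm{Z}_{7/2+a,r_2}^{s}}$ on the set where $u(t_j)$ has the extra decay, and at other times by $\|u(t)\|_{\mathrm{Z}_{r_1,r_2}^{s}}$; the delicate term is the nonlocal contribution $\int x^{5+2a}\,u\,D^{a+1}_x u_x\,dxdy$, which I would handle by the Fourier-side commutator identities of the type $\mathcal{D}^{\alpha}_\xi(|\xi|^{1+a}\widehat{u})$ versus $|\xi|^{1+a}\mathcal{D}^{\alpha}_\xi\widehat{u}$ appearing in the proof of Theorem \ref{anisogbozk}, the cancellation $\widehat{u}(0,\eta,t)=0$ ensuring that the would-be singular term $|\xi|^{1+a}$ near $\xi=0$ pairs against a function vanishing at $\xi=0$. (iii) From step (ii), derive that along the flow a certain moment — morally $\int_{\R^2} x\,u(x,y,t)\,dxdy$ or the "second moment in $x$" of $u$ — evolves polynomially (in fact affinely, after using $\widehat u(0,\eta,t)\equiv 0$ twice) in $t$; the three conditions $u(t_j)\in \mathrm{Z}_{7/2+a,r_2}^{s}$ then force this affine function of $t$ to vanish at three distinct points, hence to be identically zero, which pins down an additional identity such as $\partial_\xi\widehat u(0,\eta,t)=0$ and $\partial_\xi^2\widehat u(0,\eta,t)=0$ for all $t$. (iv) Conclude: having $\widehat u(0,\eta,t)$ vanish together with its first two $\xi$-derivatives, combined with the regularity $s\geq 4$ and decay $r_1>3$, is incompatible with $u(t)\not\equiv 0$ — this last step is a Liouville/unique-continuation statement proved exactly as in the BO-ZK case \cite{AP} and the fractional ZK case \cite{riano}, by examining the evolution equation for $\widehat u$ at $\xi=0$ and showing that a nonzero datum would immediately break the decay.

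The main obstacle I anticipate is step (ii), specifically controlling the nonlocal term $D^{a+1}_x\partial_x u$ against the high weight $x^{5+2a}$: multiplication by $x^{5+2a}$ corresponds on the Fourier side to applying $\partial_\xi^{5+2a}$ (a fractional derivative of non-integer order $5+2a$), and one must commute this past the multiplier $|\xi|^{1+a}$, which is only finitely smooth at the origin. This is precisely the computation that determines the threshold $7/2+a$: the commutator $[\mathcal{D}^{5+2a}_\xi,|\xi|^{1+a}]$ produces a term behaving like $|\xi|^{-(3/2+a)}$ times lower-order $\xi$-derivatives of $\widehat u$, which is locally integrable near $\xi=0$ only when paired with something vanishing sufficiently fast at $\xi=0$ — and the vanishing furnished by Theorem \ref{P1} plus the affine-in-$t$ argument of step (iii) is exactly enough to close it, but not more, which is why $r_1=7/2+a$ is the true obstruction rather than a larger value. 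Managing this requires the same interpolation and Stein-type fractional-derivative product/commutator lemmas (with the weight $\langle x\rangle_N^\theta$ regularization to make everything rigorous) that underpin the proof of Theorem \ref{anisogbozk}, so I would set up the argument to reuse those estimates wholesale rather than reprove them.
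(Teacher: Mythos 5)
Your opening move (apply Theorem \ref{P1} to two of the three times to get $\widehat{u}(0,\eta,t)\equiv 0$) is correct and is also how the paper begins, but after that the proposal diverges and, as written, has a genuine gap: you never supply the mechanism by which the hypothesis $u(t_j)\in \mathrm{Z}_{7/2+a,r_2}^{s}$ produces the vanishing of any moment. The paper works on the integral equation \eqref{121}: multiplying by $|x|^{7/2+a}$, writing $7/2+a=4+\alpha$ with $\alpha=a-1/2$ (for $a\in(1/2,1)$; the other ranges use $\partial_\xi^3$ or the integer case), expanding $\partial_\xi^4(\psi\,\widehat{\cdot}\,)$ via \eqref{F5}, and showing that every term except those carrying the singular factors $|\xi|^{a-1}$, $|\xi|^{a-2}$ (namely $H_6$, $H_{12}$, handled with Taylor's formula \eqref{taylor}, which is where $\widehat{u}(0,\eta,t)=0$ enters) belongs to $L^2$. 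The surviving obstruction is exactly $D^{\alpha}_{\xi}\big(|\xi|^{a-1}\varphi(\xi)\chi\big)$, which by Proposition \ref{DsteinL2} is \emph{not} in $L^2_\xi$, multiplied by the coefficient $\int_0^{t_2}\!\!\int x e^{-i\eta y}u(x,y,\tau)\,dxdy\,d\tau$. Finiteness of the weighted norm at the pair $(t_1,t_2)$ therefore forces this time-integrated first moment to vanish; a mean-value argument gives $\tau_1\in(t_1,t_2)$ (and, repeating with $(t_2,t_3)$, a $\tau_2\in(t_2,t_3)$) at which $\int x\,u\,dxdy=0$; and since \eqref{dtgb} with $\eta=0$ gives $\frac{d}{d\tau}\int x\,u\,dxdy=\frac12\|u(\tau)\|^2=\frac12\|\phi\|^2$, an affine function with two zeros, one concludes $\phi=0$. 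None of this chain is present in your steps (ii)--(iv).

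Concretely: (1) your step (ii) proposes the energy identity for $\int x^{5+2a}u^2\,dxdy$, i.e. the weighted norm $\||x|^{5/2+a}u\|^2$, which tests decay of order $5/2+a$ only — this is inside the persistence range of Theorem \ref{anisogbozk}, Part 4), so it cannot detect the $7/2+a$ threshold; (2) your step (iii) asserts that $u(t_j)\in\mathrm{Z}_{7/2+a,r_2}^{s}$ forces the moment to vanish \emph{at} $t_j$ — this is false as a pointwise-in-time statement (strong decay does not annihilate moments; think of a Gaussian), and the actual vanishing obtained in the paper is of \emph{time-integrated} moments over the intervals between decay times, extracted from the non-$L^2$ factor above; (3) your step (iv) invokes a static Liouville principle ("$\widehat u$, $\partial_\xi\widehat u$, $\partial_\xi^2\widehat u$ vanish at $\xi=0$, hence $u\equiv0$"), which is not a true statement for a fixed-time function and is not what the paper (or \cite{AP}, \cite{riano}) uses — the conclusion comes from the dynamical identity $\frac{d}{d\tau}\int x\,u\,dxdy=\frac12\|u\|^2$ together with $L^2$ conservation, not from vanishing of $\xi$-derivatives alone. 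To repair the argument you would essentially have to abandon the weighted energy computation and redo the Fourier-side Duhamel analysis that isolates the borderline term and its coefficient, which is the heart of the paper's proof.
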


Statements in Theorems \ref{P1} and \ref{UCP} may be seen as unique continuation principles. The first results in this direction for nonlocal dispersive equations was put forward by R. Iorio in \cite{Iorio}, \cite{Iorio1} and \cite{Iorio2}, where  the author studied the Benjamin-Ono equation in weighed spaces $L^2((1+|x|^{k})dx)$ with $k$ an integer number. Iorio's results were extended to encompass non-integer values of $k$ in  \cite{GermanPonce}. Then, similar results were established for the dispersion generalized Benjamin-Ono equation \eqref{dgbo} in \cite{FLP1}. Our strategy to prove Theorems \ref{P1} and \ref{UCP} are also inspired in \cite{FLP1}.

The paper is organized as follows. In Section \ref{notation} we introduce some notation and give preliminary results. In particular we recall several product and commutator estimates. In Section \ref{localweighted1} we prove Theorem \ref{anisogbozk}; we divide the proof by first proving the persistence with respect to weights in the $y$-direction and then in the $x$-direction. Finally, in Section \ref{uniquep} we prove Theorems \ref{P1} and \ref{UCP}.

%%%%%%%%%%%%%%%%%%%%%%%%%%%%%%%%%%%%%%%%%%%%%%%%%%%%%%%%%%%%%%%%%%%%%%%%%%%%%%%%%%%%%%%%%%%%%%%%%%%%%%%%%%%%%%%%%%%%
%%%%%%%%%%%%%%%%%%%%%%%%%%%%%%%%%%%%%%%%%%%%%%%%%%%%%%%%%%%%%%%%%%%%%%%%%%%%%%%%%%%%%%%%%%%%%%%%%%%%%%%%%%%%%%%%%%%%
%%%%%%%%%%%%%%%%%%%%%%%%%%%%%%%%%%%%%%%%%%%%%%%%%%%%%%%%%%%%%%%%%%%%%%%%%%%%%%%%%%%%%%%%%%%%%%%%%%%%%%%%%%%%%%%%%%%%

\section{Notation and Preliminaries}\label{notation}

Let us first introduce some notation. We use $c$ to denote various positive constants
that may vary line by line; if necessary we use subscript to indicate
dependence on parameters. Given positive numbers $A$ and $B$, we write $A\lesssim B$ to say that $A\leq cB$ for some positive constant $c$. By $\|\cdot\|_{L^p(\R^d)}$ we denote the usual $L^p(\R^d)$ norm. If no confusion is caused we will use $\|\cdot\|_{p}$ instead of $\|\cdot\|_{L^p(\R^d)}$.
For short we denote the $L^2$ norm simply by $\|\cdot\|$. In particular, if
$f=f(x,y)$ then $\|f\|=\|\|f(\cdot,y)\|_{L^2_x}\|_{L^2_y}$, where by
$\|\cdot\|_{L^2_z}$ we mean the $L^2_z$ norm with respect to the variable
$z$.  The scalar product
in $L^2$ will be then represented by $(\cdot,\cdot)$. For any $s\in \R$, $H^s:=H^s(\R^d)$ represents the usual $L^2$-based
Sobolev space endowed with the norm $\|\cdot\|_{H^s}$. The Fourier transform of $f$ is
defined by
$$
\hat{f}(\zeta)=\int_{\R^d}e^{-ix\cdot\zeta}f(x)dx, \qquad \zeta=(\zeta_1,\ldots,\zeta_d)\in\R^d.
$$
Given any complex number $z$ and a function $f$ defined on $\R^d$, let us define the Bessel and Riesz operators, via their Fourier transforms, as follows
$$
\widehat{J^z_{x_i}f}(\zeta)=(1+|\zeta_i|^2)^{z/2}\hat{f}(\zeta), \quad \widehat{D^z_{x_i}f}(\zeta)=|\zeta_i|^{z}\hat{f}(\zeta), 
$$
$$
\widehat{J^zf}(\zeta)=(1+|\zeta|^2)^{z/2}\hat{f}(\zeta), \quad \widehat{D^zf}(\zeta)=|\zeta|^{z}\hat{f}(\zeta).
$$
Given $s_1,s_2\in \R$, the anisotropic Sobolev space
$H^{s_1,s_2}=H^{s_1,s_2}(\R^2)$ is the set of all tempered distributions $f=f(x,y)$
such that
$$
\|f\|^{2}_{H^{s_1,s_2}}:=\|f\|^{2} +
\|J_x^{s_1}f\|^{2}+\|J_y^{s_2}f\|^{2}<\infty.
$$
We also define the Sobolev spaces in $x$- and $y$-directions, $H_x^{s_1}$ and $H_y^{s_2}$, respectively, as being the set of tempered distributions $f$ such
that
$$\|f\|_{H^{s_1}_x}:=\|J_x^{s_1}f\|<\infty \quad \mbox{and} \quad \|f\|_{H^{s_2}_y}:= \|J_y^{s_2}f\|<\infty.$$

Let $r_1,r_2\in \R$. We define $L^2_{r_1,r_2}$ to be the space all functions $f=f(x,y)$ satisfying
$$
\|f\|_{L^2_{r_1,r_2}}^2:= \int_{\R^2}(1+x^{2r_1}+y^{2r_2})|f(x,y)|^2 dxdy<\infty.
$$
Note that $L^2_{r_1,r_2}=L^2_{r_1,0}\cap L^2_{0,r_2}$.
For $s_1,s_2,r_1,r_2\in \R$, we denote
$$
\mathcal{Z}_{r_1,r_2}^{s_1,s_2}:=H^{s_1,s_2}(\R^2)\cap L^2_{r_1,r_2}(\R^2),
$$
 The norm in $\mathcal{Z}_{r_1,r_2}^{s_1,s_2}$ is
given by
$\|\cdot\|_{\mathcal{Z}_{r_1,r_2}^{s_1,s_2}}^2=\|\cdot\|_{H^{s_1,s_2}}^2+\|\cdot\|_{L^2_{r_1,r_2}}^2$.
Also, the subspace $\dot{\mathcal{Z}}_{r_1,r_2}^{s_1,s_2}$ of $\mathcal{Z}_{r_1,r_2}^{s_1,s_2}$ is defined as
$$
\dot{\mathcal{Z}}_{r_1,r_2}^{s_1,s_2}:=\{f\in\mathcal{Z}_{r_1,r_2}^{s_1,s_2}\ | \
\hat{f}(0,\eta)=0, \ \eta\in\R \}.
$$
Finally, the spaces $\mathrm{Z}^{s}_{r_1,r_2}$ and $\dot{\mathrm{Z}}^{s}_{r_1,r_2}$ are defined as
$$
\mathrm{Z}^{s}_{r_1,r_2}:= \mathcal{Z}_{r_{1},r_{2}}^{(1+a)s,2s} \quad \mbox{and} \quad \dot{\mathrm{Z}}^{s}_{r_1,r_2}:= \dot{\mathcal{Z}}_{r_{1},r_{2}}^{(1+a)s,2s}.
$$

Suppose  $\phi\in \mathcal{Z}_{r_1,r_2}^{s_1,s_2}$ and let $u$ be the corresponding local solution of
\eqref{gbozk}. Assuming  that $u$ is sufficiently regular, we can integrate
the equation with respect to $x$ to obtain
\begin{equation}\label{consquan}
\int_{\R}u(x,y,t)dx=\int_\R\phi(x,y)dx, \quad y\in\R
\end{equation}
as long as the solution exists. This implies that
\begin{equation}\label{fourieru}
\hat{u}(0,\eta,t)=\hat{\phi}(0,\eta), \quad \eta\in \R,
\end{equation}
for all $t$ for which the solution exists. In particular, if $\phi\in
\dot{\mathcal{Z}}_{r_1,r_2}^{s_1,s_2}$ then $u(t)\in
\dot{\mathcal{Z}}_{r_1,r_2}^{s_1,s_2}$ for any
$t$ for which the solution exits.

Next, we introduce some preliminaries results which will be useful to
prove our main results. We start with  some commutator estimates. 

\begin{theorem}\label{Comu}
For any $p\in (1,\infty)$ and $l,m\in \Z^{+}\cup \{0\},$ with $l+m\geq 1,$ there exists a constant  $c>0$, depending only on $p,l$, and $m$ such that
\begin{equation*}
\|\partial_{x}^{l}[\mathcal{H};g]\partial_{x}^{m}f\|_{L^p(\R)}\lesssim \|\partial_{x}^{l+m}g\|_{L^\infty(\R)}\|f\|_{L^p(\R)},
\end{equation*}
where $\partial_{x}^k$ denotes the derivative of order $k$.
\end{theorem}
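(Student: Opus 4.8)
The plan is to prove the commutator estimate $\|\partial_x^l[\mathcal H;g]\partial_x^m f\|_{L^p(\R)}\lesssim \|\partial_x^{l+m}g\|_{L^\infty(\R)}\|f\|_{L^p(\R)}$ by reducing it, via the Leibniz rule and the fact that $\mathcal H$ commutes with $\partial_x$, to the single basic case $l=0$, $m=1$, namely $\|[\mathcal H;g]\partial_x f\|_{L^p}\lesssim \|g'\|_{L^\infty}\|f\|_{L^p}$, and then invoking the Calderón commutator theorem. I would first record the identity $\partial_x^l[\mathcal H;g]\partial_x^m f = [\mathcal H;\partial_x^l(g\,\cdot)]\partial_x^m f$ is \emph{not} quite right, so instead I would proceed by expanding $\partial_x^l(\mathcal H(g\partial_x^m f)) - \partial_x^l(g\,\mathcal H\partial_x^m f)$ using that $\mathcal H\partial_x = \partial_x\mathcal H$ to pull all the outer $x$-derivatives inside, and the Leibniz rule on the product $g\,\mathcal H\partial_x^m f$, collecting terms of the form $[\mathcal H;\partial_x^j g]\partial_x^{k} f$ with $j\geq 1$ and $j+k = l+m$ (the terms where no derivative falls on $g$ cancel against the corresponding part of $\mathcal H(g\partial_x^{l+m}f)$, because those do not involve a commutator structure — more precisely $\partial_x^{l+m}\mathcal H(gh) - g\,\partial_x^{l+m}\mathcal H h$ expands into a sum over $j\geq 1$ of binomial-coefficient multiples of $(\partial_x^j g)\,\partial_x^{l+m-j}\mathcal H h - $ commutator pieces). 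This bookkeeping shows it suffices to bound, for each $1\leq j\leq l+m$, a term of shape $\|[\mathcal H;\partial_x^{j}g]\,\partial_x^{l+m-j}f\|_{L^p}$, and after a further integration-by-parts inside the commutator one reduces each such term to the Calderón commutator $\|[\mathcal H;G']\partial_x F\|_{L^p}\lesssim\|G'\|_\infty\|F\|_{L^p}$ with $G = \partial_x^{j-1}g$ (so $G' = \partial_x^j g$, and $\|G'\|_\infty \leq \|\partial_x^{l+m}g\|_\infty$ only when $j=l+m$; for smaller $j$ one needs the a priori assumption that $g\in\mathcal S$ or has all the relevant derivatives bounded, which in the applications of this paper is automatic since $g$ will be a smooth weight).

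Concretely, the one irreducible estimate I would state and use as a black box is the classical Calderón first commutator bound: for $p\in(1,\infty)$ there is $c_p$ with $\|[\mathcal H;b]\,\partial_x f\|_{L^p}\leq c_p\|b'\|_{L^\infty}\|f\|_{L^p}$; this is standard (Calderón, Coifman–Meyer) and I would cite it rather than reprove it, since a self-contained proof would go through the $T(1)$ theorem or the rising-sun / good-$\lambda$ machinery and is far outside the scope of the paper. Given that input, the case $l=0$, $m=1$ is immediate. For general $m$ with $l=0$: write $[\mathcal H;g]\partial_x^m f$ and repeatedly integrate by parts / use $\mathcal H\partial_x = \partial_x\mathcal H$ to trade each extra $\partial_x$ hitting $\partial_x^m f$ for one hitting $g$, at the cost of Leibniz terms, arriving at a sum of Calderón commutators $[\mathcal H;\partial_x^{m-1-i}g']\partial_x(\partial_x^{i}f)$ — actually cleaner: use that $\partial_x^m(gh) = \sum_{i=0}^m\binom{m}{i}(\partial_x^i g)(\partial_x^{m-i}h)$ applied to $h = f$ \emph{after} writing $g\partial_x^m f = \partial_x^m(gf) - \sum_{i=1}^m\binom mi (\partial_x^i g)(\partial_x^{m-i}f)$, so that $[\mathcal H;g]\partial_x^m f = [\mathcal H,g]\,(\text{stuff})$ telescopes into lower-order commutators plus one genuine top-order piece. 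For general $l$, apply $\partial_x^l$ to the $l=0$ result and push the derivatives through $\mathcal H$, once more generating only Leibniz terms each of which is again of the reduced form.

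The main obstacle is purely organizational rather than analytic: correctly tracking the Leibniz expansion so that (i) every resulting term really is of the commutator type $[\mathcal H;\,\partial_x^{\geq 1}g\,]\,\partial_x^{(\cdot)}f$ — i.e. verifying the non-commutator ``diagonal'' terms cancel — and (ii) every derivative that lands on $g$ is of order at most $l+m$, so that $\|\partial_x^{l+m}g\|_{L^\infty}$ (together with, if needed, lower-order derivatives of $g$, which are controlled in the intended applications) indeed dominates. I expect no difficulty from the harmonic-analysis side once the Calderón bound is cited; the only subtlety to flag is that the constant depends on $p,l,m$ through the binomial coefficients and the number of terms, exactly as the statement allows. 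I would therefore present the argument as: (1) state the Calderón commutator lemma; (2) do $l=0,m=1$; (3) induct on $m$ with $l=0$ via Leibniz and $\mathcal H\partial_x=\partial_x\mathcal H$; (4) handle general $l$ by applying $\partial_x^l$ and commuting it inside, reducing to step (3). For a paper of this type a couple of lines citing the literature for the Calderón estimate and a short induction is entirely adequate, and I would keep the combinatorial expansion schematic rather than writing out all binomial terms.
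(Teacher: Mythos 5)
The paper itself does not prove Theorem \ref{Comu}: it is quoted as a known generalization of Calder\'on's commutator estimate, with the proof delegated to Lemma 3.1 of Dawson--McGahagan--Ponce and Theorem 6 of Fonseca--Ponce. Your plan, by contrast, is to derive the general case from the \emph{first} Calder\'on commutator by Leibniz manipulations, and this reduction has a genuine gap: it is circular precisely in the cases the paper needs, namely $l+m\ge 2$. The only identities that ``$\mathcal H\partial_x=\partial_x\mathcal H$ plus Leibniz'' produce are of the type $\partial_x([\mathcal H;h]v)=[\mathcal H;\partial_x h]v+[\mathcal H;h]\partial_x v$, hence $\partial_x^l[\mathcal H;g]\partial_x^m f=\sum_{j=0}^{l}\binom{l}{j}[\mathcal H;\partial_x^j g]\partial_x^{l+m-j}f$ and variants obtained by moving derivatives off $f$; every such rearrangement regenerates, with coefficient one, a commutator of the \emph{same total order} carrying at least two derivatives on the $f$ side (for instance $[\mathcal H;g]\partial_x^{l+m}f$, or dually $\partial_x^{l+m}[\mathcal H;g]f$), so the scheme never terminates at the first commutator. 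Nor can you apply the first-commutator bound to a term $[\mathcal H;\partial_x^j g]\partial_x^k f$ with $k\ge 2$: that puts $\|\partial_x^{k-1}f\|_{L^p}$ on the right-hand side, which is inadmissible since $f$ is only assumed to be in $L^p$. The irreducible core appears already for $l=m=1$: the kernel of $\partial_x[\mathcal H;g]\partial_x$ is $c\,\bigl(g'(x)+g'(y)-2\tfrac{g(x)-g(y)}{x-y}\bigr)(x-y)^{-2}$, a second-order Calder\'on commutator, and its $L^p$ bound by $\|g''\|_{L^\infty}\|f\|_{L^p}$ is a theorem of Calder\'on--Coifman--Meyer type, not a formal consequence of the first commutator. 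That estimate, for all $l+m$, is exactly the content of the results the paper cites.

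A secondary defect: your fallback of assuming all intermediate derivatives of $g$ bounded (or $g\in\mathcal S$) proves a weaker statement than Theorem \ref{Comu}, whose constant must depend only on $p,l,m$ and whose right-hand side involves only $\partial_x^{l+m}g$; moreover, in Section \ref{localweighted1} the symbol is the truncated weight $g=\langle x\rangle_N^{\theta}$, whose derivatives of order at least one are bounded uniformly in $N$ while $g$ itself grows like $N^{\theta}$, so any estimate that leaks $\|g\|_{L^\infty}$ (for example from the $j=0$ term you are forced to retain) would destroy the $N$-uniformity required by the weighted energy argument. If you want a proof rather than a citation, the standard route is through the kernel: after integrations by parts the kernel of $\partial_x^l[\mathcal H;g]\partial_x^m$ is $c\,\partial_x^l\partial_y^m\bigl(\tfrac{g(x)-g(y)}{x-y}\bigr)$; writing $\tfrac{g(x)-g(y)}{x-y}=\int_0^1 g'(y+t(x-y))\,dt$ and integrating once by parts in $t$, one is left with the harmless terms $\mathcal H(g^{(l+m)}f)$ and $g^{(l+m)}\mathcal Hf$ plus operators with kernels $\tfrac{1}{x-y}\int_0^1 q(t)\,g^{(l+m)}(y+t(x-y))\,dt$ for polynomial $q$, whose $L^p$-boundedness with norm $\lesssim\|g^{(l+m)}\|_{L^\infty}$ is again the (higher) Calder\'on commutator theorem one must invoke in any case --- which is what the paper does by citation.
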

\begin{proof}
This is a generalization of the Calder\'on commutator estimates \cite{Cald}. See Lemma 3.1 in \cite{Dawson} or Theorem 6 in \cite{GermanPonce}.
\end{proof}

\begin{proposition}\label{Jota}
Let  $\varrho \in L^{\infty}(\R)$, with $\partial_x^k \varrho \in L^{2}(\R)$ for $k=1,2$. Then, for any $\theta\in (0,1)$, there exists a constant $c>0$, depending only on $\varrho$ and $\theta$, such that
\begin{equation}\label{Jota1}
\|[J^\theta;\varrho]f\|_{L^2(\R)}\leq c \|f\|_{L^2(\R)}.
\end{equation}
In addition,
\begin{equation}\label{Jotaf1}
\|J^\theta(\varrho f)\|_{L^2(\R)}\leq c \|J^\theta f\|_{L^2(\R)}.
\end{equation}
\end{proposition}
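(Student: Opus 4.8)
The plan is to prove the two inequalities \eqref{Jota1} and \eqref{Jotaf1} in order, deriving the second as an easy consequence of the first together with the trivial estimate for multiplication by a bounded function. Throughout, the Bessel operator $J^\theta$ here is the one-dimensional operator $J^\theta_x$ acting on functions of a single variable, with symbol $(1+|\xi|^2)^{\theta/2}$; since $\theta\in(0,1)$ this symbol behaves like a fractional derivative of order $\theta$ at high frequency, so the heuristic is that $[J^\theta;\varrho]$ should be an operator of order $\theta-1\le 0$, hence bounded on $L^2$, provided $\varrho$ has one derivative in a suitable sense. The hypotheses $\varrho\in L^\infty$ and $\partial_x\varrho,\partial_x^2\varrho\in L^2$ are exactly what is needed to make this rigorous while controlling both the low- and high-frequency regimes.

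First I would establish \eqref{Jota1}. Write $m(\xi)=(1+|\xi|^2)^{\theta/2}$ and compute the kernel of the commutator: on the Fourier side, $\widehat{[J^\theta;\varrho]f}(\xi)=\frac{1}{2\pi}\int \big(m(\xi)-m(\xi')\big)\hat\varrho(\xi-\xi')\hat f(\xi')\,d\xi'$, so that $[J^\theta;\varrho]$ has convolution-type kernel $K(\xi,\xi')=\big(m(\xi)-m(\xi')\big)\hat\varrho(\xi-\xi')$. By the mean value theorem, $|m(\xi)-m(\xi')|\le c\,|\xi-\xi'|\,\langle\xi\rangle^{\theta-1}+c\,|\xi-\xi'|\,\langle\xi'\rangle^{\theta-1}$ after splitting into the regions $|\xi-\xi'|\le\tfrac12\langle\xi'\rangle$ (where $\langle\xi\rangle\sim\langle\xi'\rangle$ and the gradient bound $|\nabla m|\lesssim\langle\cdot\rangle^{\theta-1}$ applies) and $|\xi-\xi'|\ge\tfrac12\langle\xi'\rangle$ (where one simply uses $|m(\xi)-m(\xi')|\le m(\xi)+m(\xi')\lesssim\langle\xi\rangle^\theta+\langle\xi'\rangle^\theta\lesssim\langle\xi-\xi'\rangle\big(\langle\xi\rangle^{\theta-1}+\langle\xi'\rangle^{\theta-1}\big)$ since $\theta<1$). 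In all cases one gets
\[
|K(\xi,\xi')|\lesssim \langle\xi-\xi'\rangle\,|\hat\varrho(\xi-\xi')|\,\big(\langle\xi\rangle^{\theta-1}+\langle\xi'\rangle^{\theta-1}\big)\le \langle\xi-\xi'\rangle\,|\hat\varrho(\xi-\xi')|,
\]
using $\theta-1<0$ so $\langle\xi\rangle^{\theta-1},\langle\xi'\rangle^{\theta-1}\le1$. Now $\langle\xi-\xi'\rangle\,|\hat\varrho(\xi-\xi')|\in L^1(\R)$ because $\langle\zeta\rangle\hat\varrho(\zeta)\in L^2$ (this is where $\partial_x^2\varrho\in L^2$ is used, via $\langle\zeta\rangle^2\hat\varrho\in L^2$ combined with $\hat\varrho\in L^2$, giving $\langle\zeta\rangle\hat\varrho\in L^2$; together with $\langle\zeta\rangle^{-1}\in L^2$, Cauchy--Schwarz yields $\langle\zeta\rangle\hat\varrho\in L^1$). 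Hence the kernel $K$ satisfies $\sup_\xi\int|K(\xi,\xi')|\,d\xi'<\infty$ and $\sup_{\xi'}\int|K(\xi,\xi')|\,d\xi<\infty$, so Schur's test gives boundedness of $[J^\theta;\varrho]$ on $L^2$, i.e. \eqref{Jota1}, with constant controlled by $\|\langle\cdot\rangle\hat\varrho\|_{L^1}\lesssim\|\varrho\|_{L^2}^{1/2}\cdot(\dots)$; being a bit more careful one sees the constant depends only on $\|\varrho\|_{L^\infty}$, $\|\partial_x\varrho\|_{L^2}$, $\|\partial_x^2\varrho\|_{L^2}$ and $\theta$.

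Then \eqref{Jotaf1} follows immediately: $J^\theta(\varrho f)=[J^\theta;\varrho]f+\varrho\,J^\theta f$, and by \eqref{Jota1} and $\varrho\in L^\infty$,
\[
\|J^\theta(\varrho f)\|_{L^2}\le \|[J^\theta;\varrho]f\|_{L^2}+\|\varrho\|_{L^\infty}\|J^\theta f\|_{L^2}\le c\|f\|_{L^2}+\|\varrho\|_{L^\infty}\|J^\theta f\|_{L^2}\le c\|J^\theta f\|_{L^2},
\]
where in the last step we used $\|f\|_{L^2}\le\|J^\theta f\|_{L^2}$ since the Bessel symbol satisfies $(1+|\xi|^2)^{\theta/2}\ge1$. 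The main obstacle is the first step, and within it the genuinely delicate point is obtaining a kernel bound good enough for Schur's test uniformly down to low frequencies: the naive estimate $|m(\xi)-m(\xi')|\lesssim|\xi-\xi'|^\theta$ would only give $|K|\lesssim|\xi-\xi'|^\theta|\hat\varrho(\xi-\xi')|$, which need not be integrable, so one really must exploit the splitting into near- and far-diagonal regions and the fact that at high frequency $m$ has a bounded gradient. An alternative, perhaps cleaner, route for this step is to invoke the Kato--Ponce / Kenig--Ponce--Vega fractional Leibniz and commutator estimates directly (as is done elsewhere in this literature), but the self-contained Schur-test argument above keeps the proof elementary and makes the roles of the hypotheses on $\varrho$ transparent.
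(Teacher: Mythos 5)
Your derivation of \eqref{Jotaf1} from \eqref{Jota1} is correct, and the overall strategy for \eqref{Jota1} (estimate the Fourier kernel $(m(\xi)-m(\xi'))\hat\varrho(\xi-\xi')$ of the commutator and apply Schur's test / Young's inequality) is sound. The gap is in how you make the kernel integrable: you assert $\hat\varrho\in L^2$ and from it $\langle\cdot\rangle\hat\varrho\in L^1$, but $\varrho\in L^2$ is \emph{not} among the hypotheses --- only $\varrho\in L^\infty$ and $\partial_x\varrho,\partial_x^2\varrho\in L^2$ are assumed. This is not a removable technicality: in the very application of this proposition in the paper (estimate \eqref{A211}), $\varrho(\xi)=|\xi|^{1+a}(1-\varphi(\xi))/\langle\xi\rangle^{1+a}$ tends to $1$ at infinity, so $\varrho\notin L^2$ and $\hat\varrho$ is only a tempered distribution (containing a Dirac mass); the quantity $\|\langle\cdot\rangle\hat\varrho\|_{L^1}$ controlling your Schur constant is infinite, and your closing remark that the constant involves $\|\varrho\|_{L^2}^{1/2}$ has the same defect. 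The problem enters exactly when you upgrade the mean-value bound $|m(\xi)-m(\xi')|\lesssim|\xi-\xi'|$ to the weaker $\lesssim\langle\xi-\xi'\rangle$, thereby throwing away the vanishing of the symbol difference on the diagonal, which is the feature that lets the commutator see only \emph{derivatives} of $\varrho$.

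The argument is repaired by keeping that vanishing. For $\theta\in(0,1)$ the symbol $m(\xi)=(1+\xi^2)^{\theta/2}$ is globally Lipschitz, so one can write $m(\xi)-m(\xi')=(\xi-\xi')\,n(\xi,\xi-\xi')$ with $n$ smooth and $|n|\le\|m'\|_{L^\infty}$; since $(\xi-\xi')\hat\varrho(\xi-\xi')$ coincides (up to a constant factor) with $\widehat{\partial_x\varrho}(\xi-\xi')$, the kernel is pointwise bounded by $c\,|\widehat{\partial_x\varrho}(\xi-\xi')|$, and this factorization also disposes of the distributional nature of $\hat\varrho$, because the pairing of $\hat\varrho$ against $\zeta\,n(\xi,\zeta)\hat f(\xi-\zeta)$ is the pairing of the genuine $L^2$ function $\widehat{\partial_x\varrho}$ against a Schwartz function. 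Finally, $\partial_x\varrho\in H^1(\R)$ by hypothesis, so Cauchy--Schwarz gives $\widehat{\partial_x\varrho}\in L^1(\R)$, and Young's inequality yields \eqref{Jota1} with $c\lesssim\|\partial_x\varrho\|_{L^2}+\|\partial_x^2\varrho\|_{L^2}$ --- using exactly the stated hypotheses. With this correction your scheme gives a complete self-contained proof; note the paper itself does not argue directly but cites Propositions 2.4 and 2.5 of \cite{FLP1}, which handle precisely this kind of bounded, non-decaying $\varrho$.
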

\begin{proof}
See Propositions 2.4 and 2.5 in \cite{FLP1}.
\end{proof}

\begin{proposition}\label{ComuDerivada}
	Let $0\leq \alpha<1$, $0<\beta\leq 1-\alpha$, $1<p<\infty$ and $d\geq 1$, then 
	\begin{equation*}
	\|D^{\alpha}[D^{\beta};g]D^{1-(\alpha+\beta)}f\|_{L^p(\R^d)}\leq c\, \|\nabla g\|_{L^\infty(\R^d)}\|f\|_{L^p(\R^d)},
	\end{equation*}
where $c$ depends on $\alpha,\beta,p$, and $d$.
\end{proposition}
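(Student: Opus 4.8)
The plan is to reduce the statement, by complex interpolation, to the endpoint $\alpha=0$ and then to exploit the explicit kernel of the commutator. First observe that, since $D^{s}$ is self‑adjoint and $[D^\beta;g]^{\ast}=-[D^\beta;\bar g]$, the $L^2$‑adjoint of $D^{\alpha}[D^\beta;g]D^{1-\alpha-\beta}$ is $-D^{1-\alpha-\beta}[D^\beta;\bar g]D^{\alpha}$, so that
\[
\big\|D^{\alpha}[D^\beta;g]D^{1-\alpha-\beta}\big\|_{L^p\to L^p}=\big\|D^{1-\alpha-\beta}[D^\beta;\bar g]D^{\alpha}\big\|_{L^{p'}\to L^{p'}}.
\]
Considering the analytic family $T_z=D^{z}[D^\beta;g]D^{1-\beta-z}$ on the strip $0\le \mathrm{Re}\,z\le 1-\beta$ and using the Mikhlin bounds $\|D^{iy}\|_{L^p\to L^p}\lesssim_{d,p}(1+|y|)^{N}$ for the imaginary powers (admissible growth for Stein's interpolation theorem), the bound for all $\alpha\in[0,1-\beta]$ and all $p\in(1,\infty)$ follows from the two endpoint bounds $T_0=[D^\beta;g]D^{1-\beta}$ and $T_{1-\beta}=D^{1-\beta}[D^\beta;g]$; by the displayed duality these are equivalent, so everything comes down to
\[
\big\|[D^\beta;g]D^{1-\beta}f\big\|_{L^p(\R^d)}\lesssim \|\nabla g\|_{L^\infty(\R^d)}\,\|f\|_{L^p(\R^d)},\qquad 1<p<\infty .
\]
The borderline case $\beta=1$, which under the hypotheses forces $\alpha=0$, is the classical Calder\'on commutator estimate for $[(-\Delta)^{1/2},g]$: writing $(-\Delta)^{1/2}=\sum_l\mathcal{R}_l\partial_l$ it splits into $\sum_l\mathcal{R}_l(\partial_l g\,\cdot)$ and $\sum_l[\mathcal{R}_l;g]\partial_l$, and in dimension $d=1$ it is already contained in Theorem \ref{Comu}.

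For $0<\beta<1$ I would start from the pointwise representation, valid for sufficiently regular $h$,
\[
[D^\beta;g]h(x)=c_{d,\beta}\int_{\R^d}\frac{\big(g(x)-g(x-z)\big)\,h(x-z)}{|z|^{d+\beta}}\,dz ,
\]
together with the identity $g(x)-g(x-z)=\int_0^1 z\cdot\nabla g(x-\theta z)\,d\theta$. Replacing $\nabla g(x-\theta z)$ by its value $\nabla g(x)$ at $\theta=0$ produces the principal term $c\sum_l\partial_{x_l}g(x)\,(K_l\ast D^{1-\beta}f)(x)$ with $K_l(z)=z_l|z|^{-d-\beta}$; since $\widehat{K_l}(\xi)=c\,\xi_l|\xi|^{\beta-2}$ this equals $c\sum_l\partial_{x_l}g(x)\,\mathcal{R}_l f(x)$, whose $L^p$ norm is $\lesssim\|\nabla g\|_{L^\infty}\|f\|_{L^p}$ by the boundedness of the Riesz transforms. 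What remains is a genuinely bilinear error term; for it (and, alternatively, for the operator with general $\alpha$) I would run a Littlewood--Paley/paraproduct decomposition in the frequencies $\eta$ of $g$ and $\zeta$ of $f$. On the Fourier side the operator is the bilinear multiplier with symbol $|\eta+\zeta|^{\alpha}\big(|\eta+\zeta|^{\beta}-|\zeta|^{\beta}\big)|\zeta|^{1-\alpha-\beta}$, and the formula $|\eta+\zeta|^{\beta}-|\zeta|^{\beta}=\sum_l\eta_l\,\beta\int_0^1|t\eta+\zeta|^{\beta-2}(t\eta_l+\zeta_l)\,dt$ recasts it as $\sum_l\eta_l\,m_l(\eta,\zeta)$ with $m_l$ homogeneous of degree zero and smooth away from the subspaces $\{\zeta=0\}$, $\{\eta+\zeta=0\}$, $\{t\eta+\zeta=0\}$. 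On the non‑resonant regions $|\eta|\ll|\zeta|$ and $|\zeta|\ll|\eta|$ the dyadically localized pieces of $m_l$ are smooth and obey the Coifman--Meyer/Mikhlin--H\"ormander bounds, so the corresponding parts of $\sum_l T_{m_l}(\partial_{x_l}g,f)$ are bounded by $\sum_l\|\partial_{x_l}g\|_{L^\infty}\|f\|_{L^p}$ after reassembling the dyadic sums with the Littlewood--Paley square function and a Schur‑type estimate; the geometric decay needed for these summations comes from $\beta<1$ in the high--low regime and from the cancellation in $|\eta+\zeta|^{\beta}-|\zeta|^{\beta}$ in the low--high regime.

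The step I expect to be the main obstacle is the resonant interaction, where $g$ and $f$ carry comparable frequencies $\sim 2^j$ while the output frequency is much smaller: there $|\eta+\zeta|^{\beta}-|\zeta|^{\beta}$ is merely comparable to $-|\zeta|^{\beta}$, so crude size bounds for the localized multipliers do not decay as the output scale drops and fail to be summable. One must exploit the exact cancellation carried by the full commutator (equivalently, the oddness of the kernel $z\mapsto z_l|z|^{-d-\beta}$), not just its size; this is also precisely where the hypothesis $\nabla g\in L^\infty$ --- rather than $g\in L^\infty$ --- must be handled with care, since only the gradient of $g$ is controlled. A clean way to package the resonant part is to verify that $\mathcal{K}\circ D^{1-\beta}$, with $\mathcal{K}$ the integral operator of kernel $(g(x)-g(y))|x-y|^{-d-\beta}$, is a Calder\'on--Zygmund operator of norm $\lesssim\|\nabla g\|_{L^\infty}$: after the reductions above this amounts to the first Calder\'on commutator together with an $L^2$‑bound of $T(1)$‑type, from which the full range $1<p<\infty$ follows by Calder\'on--Zygmund theory.
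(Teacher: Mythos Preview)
The paper does not actually prove this proposition: its ``proof'' is a one-line citation to Proposition~3.10 in Li, \emph{On Kato--Ponce and fractional Leibniz} (and to the earlier one-dimensional versions in Dawson--McGahagan--Ponce and Fonseca--Linares--Ponce). So there is no argument in the paper to compare against; what you have written is a genuine attempt at a direct proof where the authors were content to quote the literature.

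Your overall strategy is the standard and correct one, and it is essentially how the cited references proceed: reduce by Stein complex interpolation (the polynomial growth of $\|D^{iy}\|_{L^p\to L^p}$ is indeed admissible), use duality to collapse the two endpoints of the strip to a single estimate for $[D^\beta;g]D^{1-\beta}$, and then analyze that operator either through its kernel or through a paraproduct decomposition of the bilinear symbol $(|\eta+\zeta|^\beta-|\zeta|^\beta)|\zeta|^{1-\beta}$. Your identification of the resonant (high--high to low) region as the crux, and of Calder\'on's first commutator / $T(1)$ as the natural tool there, is also on target.

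That said, as written this is a roadmap rather than a proof. Two places would need to be made precise before it counts as complete. First, in the interpolation step you must verify the \emph{analyticity} and admissible growth of $z\mapsto T_z$ on the strip, not just cite the Mikhlin bound for $D^{iy}$; the operator $T_z$ also contains $D^{1-\beta-z}$, and you need the composite bounds at both vertical lines uniformly in $\mathrm{Im}\,z$ and for every $p$ in the target range. Second---and this is the real gap you yourself flag---the resonant piece is only described, not handled: saying ``verify that $\mathcal K\circ D^{1-\beta}$ is a Calder\'on--Zygmund operator of norm $\lesssim\|\nabla g\|_{L^\infty}$'' is precisely the content of the proposition in that frequency regime, and appealing to a $T(1)$-type argument without checking the weak boundedness property and the $T(1),T^\ast(1)\in\mathrm{BMO}$ conditions (or an equivalent substitute) leaves the hardest step unproved. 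If you intend a self-contained argument, this is where the work lies; otherwise, citing Li's Proposition~3.10 (as the paper does) is the honest shortcut.
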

\begin{proof}
	This result is a consequence of Proposition 3.10 in \cite{dong}. For a similar result in the one-dimensional case see Proposition 3.2 in \cite{Dawson}. See also Proposition 2.2 in \cite{FLP1}.
\end{proof}

\begin{proposition}\label{C}
	If $f\in L^{2}(\R)$ and $\Phi \in H^{2}(\R),$ then
\begin{equation*}
\|[D^{\alpha};\Phi]f\|_{L^2(\R)}\lesssim \|\Phi\|_{H^2(\R)}\|f\|_{L^2(\R)},
\end{equation*}
where $\alpha \in (0,1)$.
\end{proposition}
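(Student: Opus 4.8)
The plan is to prove Proposition \ref{C} by interpolation between the two endpoint cases $\alpha \to 0$ and $\alpha \to 1$, exploiting the structure of the commutator. First I would record the two endpoint estimates. At the ``$\alpha = 1$'' end, writing $[D;\Phi]f = D(\Phi f) - \Phi Df$, one uses that $D = \mathcal{H}\partial_x$ and the Calder\'on commutator estimate from Theorem \ref{Comu} (with $l=m=0$, so $[\mathcal H;\Phi]\partial_x f$ on one side) together with the identity $[\mathcal H\partial_x;\Phi]f = [\mathcal H;\Phi]\partial_x f + \mathcal H(\Phi_x f)$; this bounds $\|[D;\Phi]f\|_{L^2}$ by $(\|\Phi_x\|_{L^\infty} + \|\Phi_x\|_{L^\infty})\|f\|_{L^2} \lesssim \|\Phi\|_{H^2}\|f\|_{L^2}$, using the Sobolev embedding $H^1(\R)\hookrightarrow L^\infty(\R)$ so that $\|\Phi_x\|_{L^\infty}\lesssim \|\Phi_x\|_{H^1}\lesssim \|\Phi\|_{H^2}$. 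At the ``$\alpha = 0$'' end the commutator is identically zero, so trivially $\|[D^0;\Phi]f\|_{L^2}=0\le \|\Phi\|_{H^2}\|f\|_{L^2}$.

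Next I would set up an analytic family of operators. Fix $\Phi$ and $f$ and consider, for $z$ in the strip $0\le \operatorname{Re} z\le 1$, the operator $T_z g := [D^z;\Phi] g = D^z(\Phi g) - \Phi D^z g$ (acting on, say, Schwartz functions, then extended). One checks $z\mapsto T_z$ is analytic in the interior and continuous and of admissible growth up to the boundary when tested against Schwartz functions, so Stein's complex interpolation theorem applies. On the line $\operatorname{Re} z = 0$ we have the trivial bound (the commutator of $D^{iy}$, a bounded Fourier multiplier, with multiplication need not vanish — here is a subtlety, so instead I would interpolate the family $\widetilde T_z := e^{z^2}[D^z;\Phi]$ only between $z=0$ real and $z=1$ real after first reducing the strip, or more simply invoke the real-variable interpolation already packaged in the cited references). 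In fact the cleanest route is to avoid complex interpolation altogether: decompose $D^\alpha = D^\alpha[D^{1-\alpha}; \,\cdot\,]$-type telescoping is not available, so instead I would apply Proposition \ref{ComuDerivada} directly. Taking in Proposition \ref{ComuDerivada} the exponents $\beta = \alpha$ and the outer derivative order $0$ (i.e. $\alpha_{\text{there}} = 0$, $\beta_{\text{there}} = \alpha$, $1-(\alpha_{\text{there}}+\beta_{\text{there}}) = 1-\alpha$) gives $\|[D^{\alpha};g]D^{1-\alpha}f\|_{L^2}\lesssim \|\nabla g\|_{L^\infty}\|f\|_{L^2}$, which is not quite the stated inequality because of the extra $D^{1-\alpha}$. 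So the genuine content of Proposition \ref{C} is the removal of that derivative at the cost of upgrading $\|\nabla g\|_{L^\infty}$ to $\|\Phi\|_{H^2}$.

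Therefore the main line of argument I would actually carry out is: write $[D^\alpha;\Phi]f = D^\alpha(\Phi f) - \Phi D^\alpha f$ and split $f$ into low and high frequencies, $f = P_{\le 1}f + P_{\ge 1}f$. On high frequencies, $\|D^\alpha g\|_{L^2}\sim \|D^{1-\alpha}(D^{2\alpha-1}g)\|\lesssim$ does not directly help; cleaner is to use $\|[D^\alpha;\Phi] P_{\ge 1} f\| \le \|[D^\alpha;\Phi] D^{1-\alpha}(D^{\alpha-1}P_{\ge 1}f)\|\lesssim \|\nabla\Phi\|_{L^\infty}\|D^{\alpha-1}P_{\ge 1}f\|\lesssim \|\nabla\Phi\|_{L^\infty}\|f\|$ since $\alpha - 1 < 0$; here I used Proposition \ref{ComuDerivada}. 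On low frequencies, one bounds each term separately: $\|\Phi D^\alpha P_{\le 1}f\|\le \|\Phi\|_{L^\infty}\|D^\alpha P_{\le 1}f\|\lesssim \|\Phi\|_{L^\infty}\|f\|$ (Bernstein), and $\|D^\alpha(\Phi P_{\le 1}f)\|\le \|D^\alpha(\Phi P_{\le 1}f)\|$; since $\Phi\in H^2\subset H^\alpha$ and $P_{\le 1}f\in L^\infty$-bounded-in-$L^2$... this term needs $\|D^\alpha(\Phi g)\|\lesssim \|J^\alpha\Phi\|_{L^2}\|g\|_{L^\infty} + \|\Phi\|_{L^\infty}\|J^\alpha g\|_{L^2}$ type fractional Leibniz, giving $\lesssim \|\Phi\|_{H^2}\|f\|$. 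Collecting, and using $\|\nabla\Phi\|_{L^\infty}\lesssim \|\Phi\|_{H^2}$, $\|\Phi\|_{L^\infty}\lesssim \|\Phi\|_{H^2}$ from Sobolev embedding in dimension one, yields the claim.

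\textbf{Main obstacle.} The delicate point is the high-frequency piece: one cannot naively put $D^\alpha$ on $\Phi$ because $\Phi$ is only $H^2$, not better, so the derivative must genuinely be moved off $f$ via the commutator structure and Proposition \ref{ComuDerivada}, which is exactly why the hypothesis $\Phi\in H^2(\R)$ (rather than merely $\nabla\Phi\in L^\infty$) is needed — to control the low-frequency fractional-Leibniz term — and why the result is phrased with $\|\Phi\|_{H^2}$ on the right-hand side. I expect bookkeeping of the Littlewood--Paley pieces and the choice of fractional Leibniz rule to be the only real work; everything else is a direct application of the cited commutator propositions and one-dimensional Sobolev embedding.
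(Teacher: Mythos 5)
Your final argument is correct, but it is not the paper's: the paper offers no proof of Proposition \ref{C} at all, it simply cites Proposition 2.12 of \cite{dBO}. What you do instead is assemble a self-contained proof from tools already stated in Section \ref{notation}: split $f=P_{\leq 1}f+P_{\geq 1}f$; on high frequencies write $[D^{\alpha};\Phi]P_{\geq 1}f=[D^{\alpha};\Phi]D^{1-\alpha}h$ with $h=D^{\alpha-1}P_{\geq 1}f$, $\|h\|\lesssim\|f\|$, and apply Proposition \ref{ComuDerivada} with outer exponent $0$ (allowed, since it permits $\alpha=0$ there and $\beta=\alpha\leq 1$), which gives the bound $\|\partial_x\Phi\|_{L^\infty}\|f\|$; on low frequencies estimate the two halves of the commutator separately via Bernstein ($\|P_{\leq 1}f\|_{L^\infty}\lesssim\|f\|$, $\|D^{\alpha}P_{\leq 1}f\|\lesssim\|f\|$) and the product estimate $\|J^{\alpha}(\Phi g)\|\lesssim\|J^{\alpha}\Phi\|\,\|g\|_{L^\infty}+\|\Phi\|_{L^\infty}\|J^{\alpha}g\|$, which the paper itself quotes from \cite{KP} and \cite{Taylor}. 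With the one-dimensional embeddings $\|\Phi\|_{L^\infty}+\|\partial_x\Phi\|_{L^\infty}\lesssim\|\Phi\|_{H^2}$ this yields the stated inequality, and in fact shows that only $\|\Phi\|_{L^\infty}+\|\partial_x\Phi\|_{L^\infty}+\|D^{\alpha}\Phi\|_{L^2}$ enters, so $H^2$ is merely a convenient sufficient hypothesis. The trade-off is clear: the paper's route is a one-line outsourcing to \cite{dBO}, whereas yours is longer but keeps the argument internal to the paper and only uses Proposition \ref{ComuDerivada}, Bernstein, Sobolev embedding, and Kato--Ponce.

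Two blemishes, neither fatal. The opening interpolation discussion is a dead end that you yourself abandon, and within it the invocation of Theorem \ref{Comu} with $l=m=0$ is not permitted (the theorem requires $l+m\geq 1$; the correct choice for $[\mathcal H;\Phi]\partial_x f$ is $l=0$, $m=1$). Since the $\alpha=1$ endpoint plays no role in your final argument, this only clutters the write-up: delete the first two paragraphs, state the low/high-frequency argument for Schwartz $f$, and extend by density.
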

\begin{proof}
See Proposition 2.12 in \cite{dBO}.
\end{proof}

In what follows, $L^{p}_{s}$ denotes the Sobolev space defined as $L^{p}_{s}:=(1-\Delta)^{-s/2}L^{p}(\R^d)$. Such spaces
can be characterized by the Stein derivative of order $b$ as follows.

\begin{theorem}\label{stein}
	Let $b\in (0,1)$ and $2d/(d+2b)<p<\infty.$ Then $f\in L^{p}_{b}(\R^{d})$ if and only if
	\begin{itemize}
		\item [a)] $f\in L^{p}(\R^{d}),$ 
		\item [b)]
		$\mathcal{D}^{b}f(x):={\displaystyle \left (
			\int_{\R^{d}}\frac{|f(x)-f(y)|^{2}}{|x-y|^{d+2b}}dy\right)^{1/2}} \in
		L^{p}(\R^{d}),$ with
		\begin{equation}\label{equiv}
		\|f\|_{b,p}:=\|J^{b}f\|_{p}\simeq \|f\|_{p}+\|D^{b}f\|_{p}\simeq \|f\|_{p}+\|\mathcal{D}^{b}f\|_{p}.
		\end{equation}
	\end{itemize}
\end{theorem}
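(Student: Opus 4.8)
The statement is the classical characterization of Bessel potential spaces due to E. Stein, and the \emph{if and only if} assertion is a formal consequence of the norm equivalences in \eqref{equiv}: once we know $\|J^{b}f\|_{p}\simeq\|f\|_{p}+\|\mathcal{D}^{b}f\|_{p}$, membership $f\in L^{p}_{b}$ (i.e. $J^{b}f\in L^{p}$) holds exactly when the right-hand side is finite, that is, when $f\in L^{p}$ and $\mathcal{D}^{b}f\in L^{p}$. So the plan is to establish the two equivalences in \eqref{equiv}. The first one, $\|J^{b}f\|_{p}\simeq\|f\|_{p}+\|D^{b}f\|_{p}$, involves no Stein derivative and is a pure Fourier-multiplier statement: with $J^{b}=(1-\Delta)^{b/2}$ and $D^{b}=(-\Delta)^{b/2}$ it suffices to show that
$$
m_{1}(\xi)=\frac{(1+|\xi|^{2})^{b/2}}{1+|\xi|^{b}},\qquad m_{2}(\xi)=\frac{1+|\xi|^{b}}{(1+|\xi|^{2})^{b/2}}
$$
are $L^{p}(\R^{d})$ multipliers. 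Both are bounded, smooth away from the origin, and satisfy the Mikhlin--H\"ormander bounds $|\partial^{\gamma}m_{j}(\xi)|\lesssim|\xi|^{-|\gamma|}$ for $|\gamma|\le\lfloor d/2\rfloor+1$; the only delicate point is the low-frequency region, where one splits off a smooth cutoff near $\xi=0$ and treats the remaining piece as a bounded, rapidly decaying symbol. The Mikhlin multiplier theorem (and its $H^{p}$ analogue in the low range of $p$) then yields this equivalence.

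The heart of the matter is the second equivalence, for which the decisive fact is that $\mathcal{D}^{b}$ and $D^{b}$ agree, up to a constant, at the level of $L^{2}$. Using $\widehat{f(\cdot)-f(\cdot-z)}(\xi)=(1-e^{-iz\cdot\xi})\hat f(\xi)$, Fubini and Plancherel give
$$
\|\mathcal{D}^{b}f\|_{L^2}^2=\int_{\R^d}\int_{\R^d}\frac{|f(x)-f(x-z)|^2}{|z|^{d+2b}}\,dz\,dx=\int_{\R^d}|\hat f(\xi)|^2\Big(\int_{\R^d}\frac{|1-e^{-iz\cdot\xi}|^2}{|z|^{d+2b}}\,dz\Big)\,d\xi=c_{d,b}\,\|D^{b}f\|_{L^2}^2,
$$
where the inner integral equals $c_{d,b}|\xi|^{2b}$ by homogeneity and rotation invariance (it converges precisely because $0<b<1$). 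This settles $p=2$ exactly and, by polarization, provides the quadratic identity that will drive the reverse inequality below.

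To reach general $p$ I would pass to polar coordinates and view $\mathcal{D}^{b}$ as a difference square function,
$$
\mathcal{D}^{b}f(x)^2=\int_{0}^{\infty}t^{-1-2b}\Big(\int_{S^{d-1}}|f(x)-f(x-t\omega)|^2\,d\omega\Big)\,dt,
$$
that is, as the Hilbert-space norm $\mathcal{D}^{b}f(x)=\|Tf(x)\|_{\mathcal H}$ of a vector-valued operator $T$ into $\mathcal H=L^2((0,\infty)\times S^{d-1};\,t^{-1-2b}\,dt\,d\omega)$, which by the $L^{2}$ identity above is bounded from $L^{2}$ into $L^{2}(\mathcal H)$. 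Writing $g=D^{b}f$ and $U=TD^{-b}$, so that $\mathcal{D}^{b}f=\|Ug\|_{\mathcal H}$ and $U$ has order zero, the forward inequality $\|\mathcal{D}^{b}f\|_{p}\lesssim\|D^{b}f\|_{p}=\|g\|_{p}$ amounts to the $L^{p}\to L^{p}(\mathcal H)$ boundedness of $U$. I would obtain it by comparing the difference square function with the Littlewood--Paley--Stein $g$-function of the semigroup generated by $D^{b}$ and invoking vector-valued Littlewood--Paley/Calder\'on--Zygmund theory; this covers $1<p<\infty$, and the weak-type endpoint together with interpolation (and $H^{p}$ arguments in the low range) extends it down to the sharp threshold $2d/(d+2b)<p$. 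For the reverse bound, the $L^{2}$ identity furnishes the quadratic relation $U^{*}U=c\,\mathrm{Id}$, whence $g=c^{-1}U^{*}(Ug)$; since the adjoint $U^{*}$ is again bounded on the relevant $L^{p}$, this yields $\|D^{b}f\|_{p}=\|g\|_{p}\lesssim\|Ug\|_{L^{p}(\mathcal H)}=\|\mathcal{D}^{b}f\|_{p}$. Adding $\|f\|_{p}$ to both sides gives the second equivalence.

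The main obstacle is precisely this passage below $L^{2}$. The clean Plancherel identity yields only $p=2$, and reaching the sharp range $2d/(d+2b)<p$ --- which dips below $1$ in low dimension --- demands the delicate harmonic analysis: the convolution kernel of $U$ is singular in the auxiliary variable (its pointwise $\mathcal H$-norm even diverges), so a direct verification of the Hilbert-space-valued H\"ormander condition is not available and one must route the argument through the comparison with a semigroup square function; likewise the weak-type/$H^{p}$ endpoints and the $L^{p}$ boundedness of $U^{*}$ in the low range require care. By contrast, the first equivalence and the $L^{2}$ identity are routine.
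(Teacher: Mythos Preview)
The paper does not actually prove this theorem: it simply writes ``See Theorem 1 in \cite{Stein}'' and moves on, treating it as a classical black box. So there is no ``paper's own proof'' to compare against; any self-contained argument you give is automatically a different route.

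Your sketch is a reasonable roadmap for the result, and you are candid about where the real work lies. A couple of remarks. First, the Mikhlin argument for the first equivalence is fine for $1<p<\infty$, but the stated range $2d/(d+2b)<p<\infty$ can dip below $1$ (e.g.\ $d=1$), and Mikhlin does not apply there; you gesture at an $H^{p}$ analogue, but this would need to be made precise. Second, for the square-function comparison you correctly flag that the vector-valued kernel of $U=TD^{-b}$ does not satisfy a naive H\"ormander condition, and you propose routing through a semigroup $g$-function; this is indeed how Stein's original argument proceeds (via the Poisson semigroup and his $g_{\lambda}^{*}$ function), but the comparison step and the reverse inequality in the low-$p$ range are the substantive content of the theorem and your outline leaves them as assertions rather than arguments. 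In short: nothing you wrote is wrong, but the proof is not self-contained at exactly the points that make the theorem nontrivial, which is presumably why the paper just cites it.
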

\begin{proof}
	See Theorem 1 in \cite{Stein}.
\end{proof}

 From the last equivalence in
\eqref{equiv} we see that the $L^p$ norms of $D^b$ and $\mathcal{D}^{b}$ are equivalent. The advantage in using $\mathcal{D}^{b}$	is that it is suitable when dealing with pointwise  estimates, as we will se below. In addition, from  Fubini's theorem we have the product estimate (see \cite[Proposition 1]{NahasPonce})
\begin{equation}\label{Leib}
\|\mathcal{D}^{b}(fg)\|_{L^2(\R^d)} \leq \|f\mathcal{D}^{b}g\|_{L^2(\R^d)} + \|g\mathcal{D}^{b}f\|_{L^2(\R^d)}.
\end{equation}

We also recall the following.

\begin{lemma}\label{Leibnitz}
Let $b\in (0,1)$ and $h$ be a measurable function on $\R$ such that $h,h'\in L^{\infty}(\R)$. Then, for all $x\in \R$
\begin{equation}\label{Lei}
\mathcal{D}^b h(x)\lesssim \|h\|_{L^{\infty}(\R)}+\|h'\|_{L^\infty(\R)}.
\end{equation}
Moreover,
\begin{equation}\label{Leibh}
\|\mathcal{D}^{b}(h f)\|_{L^2(\R)} \leq \|\mathcal {D}^b h\|_{L^\infty(\R)} \|f\|_{L^2(\R)} + \|h\|_{L^\infty(\R)} \|\mathcal{D}^{b}f\|_{L^2(\R)}.
\end{equation}
\end{lemma}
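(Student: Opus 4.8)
The plan is to prove the two claims separately, both by exploiting the integral definition of $\mathcal{D}^b$ from Theorem \ref{stein}(b), specialized to $d=1$. For the pointwise bound \eqref{Lei}, fix $x\in\R$ and split the defining integral $\int_\R |h(x)-h(x-z)|^2 |z|^{-1-2b}\,dz$ into the regions $|z|\leq 1$ and $|z|>1$. On $|z|>1$ one uses $|h(x)-h(x-z)|\leq 2\|h\|_{L^\infty}$, so that part of the integral is bounded by $4\|h\|_{L^\infty}^2\int_{|z|>1}|z|^{-1-2b}\,dz$, which is a finite constant since $2b>0$. On $|z|\leq 1$ one uses the mean value theorem, $|h(x)-h(x-z)|\leq \|h'\|_{L^\infty}|z|$, giving a bound by $\|h'\|_{L^\infty}^2\int_{|z|\leq 1}|z|^{2}|z|^{-1-2b}\,dz=\|h'\|_{L^\infty}^2\int_{|z|\leq 1}|z|^{1-2b}\,dz$, which is finite because $1-2b>-1$, i.e. $b<1$. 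Taking square roots and using $\sqrt{A+B}\leq\sqrt A+\sqrt B$ yields \eqref{Lei}.

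For \eqref{Leibh}, the clean way is to invoke the Fubini-type product inequality \eqref{Leib} with $d=1$, $f\rightsquigarrow h$, $g\rightsquigarrow f$, which gives
\[
\|\mathcal{D}^{b}(hf)\|_{L^2(\R)}\leq \|h\,\mathcal{D}^{b}f\|_{L^2(\R)}+\|f\,\mathcal{D}^{b}h\|_{L^2(\R)}.
\]
The first term on the right is at most $\|h\|_{L^\infty}\|\mathcal{D}^{b}f\|_{L^2}$, and the second is at most $\|\mathcal{D}^{b}h\|_{L^\infty}\|f\|_{L^2}$; note $\mathcal{D}^b h\in L^\infty$ is guaranteed by the already-established \eqref{Lei} together with the hypotheses $h,h'\in L^\infty$. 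This is exactly \eqref{Leibh}. (Alternatively, if one prefers not to cite \eqref{Leib}, write $h(x)f(x)-h(y)f(y)=h(x)(f(x)-f(y))+f(y)(h(x)-h(y))$, insert into the square-integral defining $\mathcal{D}^b(hf)(x)$, apply the triangle inequality in $L^2_y\big(|x-y|^{-1-2b}dy\big)$ pointwise in $x$, and then take the $L^2_x$ norm, which reproduces the same two terms.)

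I do not expect any genuine obstacle here; both statements are standard and the only points requiring a little care are the convergence of the two pieces of the splitting in \eqref{Lei}, which is precisely where the restriction $b\in(0,1)$ enters ($2b>0$ controls the tail, $2b<2$ controls the singularity at $z=0$), and making sure $\mathcal{D}^b h$ is genuinely finite a.e. so that the product $f\,\mathcal{D}^b h$ in the second step makes sense — both handled by the first part. The estimate is essentially an anisotropic, one-dimensional repackaging of the well-known facts about Stein derivatives recorded around \eqref{equiv}.
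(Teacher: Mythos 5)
Your argument is correct and matches the paper's: the paper obtains \eqref{Leibh} from the product estimate \eqref{Leib} in exactly the way you do, and for \eqref{Lei} it simply cites Lemma 2.7 of \cite{pastran}, whose content is precisely the splitting argument (tail controlled by $\|h\|_{L^\infty}$ since $2b>0$, near part by $\|h'\|_{L^\infty}$ via the Lipschitz bound since $b<1$) that you write out. The only minor point worth noting is that with $h$ merely measurable and $h'\in L^\infty$ one should read the ``mean value theorem'' step as the Lipschitz estimate for the a.e.\ representative of $h$, which is harmless in all applications in the paper.
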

\begin{proof}
For \eqref{Lei} see Lemma 2.7 in  \cite{pastran}. Note that \eqref{Leibh} is a consequence of \eqref{Leib}.
\end{proof}

Some pointwise estimates in terms of the Stein derivative is given below. We start by introducing a cut-off function 
\begin{equation}\label{varphi}
\varphi\in C_0^\infty(\R) \ \mbox{such that}\ \mbox{supp}\ \varphi\subset [-2,2] \ \mbox{ and} \ \varphi\equiv1 \ \mbox{in}  \ (-1,1).
\end{equation}

\begin{proposition}\label{Dstein}
For any $\theta \in (0,1)$ and $\alpha >0,$ the function $\mathcal{D}^\theta (|\xi|^\alpha \varphi (\xi))(\cdot)$ is continuous in $\eta \in \R-\{0\}$  with
$$\mathcal{D}^\theta (|\xi|^\alpha \varphi(\xi))(\eta) \sim \left\{\begin{array}{lcc}
c|\eta|^{\alpha -\theta}+c_1,& \quad \alpha \not= \theta, |\eta|\ll 1, \\
c(-\ln |\eta|)^{1/2}, & \quad \alpha=\theta, |\eta|\ll 1,\\
\frac{c}{|\eta|^{1/2+\theta}}, & \quad  |\eta|\gg 1,
\end{array}\right.
$$
 in particular, one has that
\begin{equation}\label{Dstein4}
\mathcal{D}^\theta (|\xi|^\alpha \varphi (\xi))\in L^{2}(\R) \ \mbox{if and only if} \ \theta< \alpha +1/2.
\end{equation}
In a similar fashion
\begin{equation}\label{Dstein1}
\mathcal{D}^\theta (|\xi|^\alpha \sgn(\xi) \varphi (\xi))\in L^{2}(\R) \ \mbox{if and only if} \ \theta< \alpha +1/2.
\end{equation}
\end{proposition}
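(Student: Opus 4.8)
The plan is to analyze the Stein derivative $\mathcal{D}^\theta(|\xi|^\alpha\varphi(\xi))(\eta)$ by splitting the defining integral according to the location of $\eta$ relative to $0$. Recall
\[
\mathcal{D}^\theta(|\xi|^\alpha\varphi(\xi))(\eta)^2 = \int_{\R}\frac{\big||\xi|^\alpha\varphi(\xi)-|\eta|^\alpha\varphi(\eta)\big|^2}{|\xi-\eta|^{1+2\theta}}\,d\xi.
\]
For the regime $|\eta|\ll 1$, I would split the domain into $|\xi-\eta|\leq |\eta|/2$, the intermediate range $|\eta|/2\leq|\xi-\eta|\leq 1$, and $|\xi-\eta|\geq 1$. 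On the first (local) piece, since $\varphi\equiv 1$ near the origin, the function reduces to $|\xi|^\alpha$ and one uses the mean value estimate $\big||\xi|^\alpha-|\eta|^\alpha\big|\lesssim |\xi-\eta|\,\max(|\xi|,|\eta|)^{\alpha-1}\sim |\xi-\eta|\,|\eta|^{\alpha-1}$, which after integrating $|\xi-\eta|^{1-2\theta}$ over $|\xi-\eta|\leq|\eta|/2$ (here $1-2\theta>-1$ always holds for $\theta<1$) yields a contribution $\sim |\eta|^{2(\alpha-\theta)}$. On the second piece, I would compare with the homogeneous profile and extract the same leading power $|\eta|^{2(\alpha-\theta)}$ plus an $O(1)$ remainder; the borderline $\alpha=\theta$ produces the logarithm $-\ln|\eta|$ because the integral $\int_{|\eta|}^1 t^{-1}\,dt$ diverges logarithmically. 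The tail piece $|\xi-\eta|\geq 1$ just contributes a bounded constant since $|\xi|^\alpha\varphi(\xi)$ is bounded with compact support. Assembling these gives the stated asymptotics for $|\eta|\ll 1$.

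For the regime $|\eta|\gg 1$, the point $\eta$ lies outside the support of $\varphi$, so $|\eta|^\alpha\varphi(\eta)=0$ and
\[
\mathcal{D}^\theta(|\xi|^\alpha\varphi(\xi))(\eta)^2 = \int_{|\xi|\leq 2}\frac{\big||\xi|^\alpha\varphi(\xi)\big|^2}{|\xi-\eta|^{1+2\theta}}\,d\xi \sim \frac{1}{|\eta|^{1+2\theta}}\int_{|\xi|\leq 2}|\xi|^{2\alpha}|\varphi(\xi)|^2\,d\xi,
\]
since $|\xi-\eta|\sim|\eta|$ uniformly on the support; this gives the decay rate $|\eta|^{-(1/2+\theta)}$ after taking the square root. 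For $|\eta|\sim 1$ (bounded away from $0$ and from $\infty$) continuity in $\eta\neq 0$ follows from dominated convergence, using that the integrand is controlled uniformly and the singularity at $\xi=\eta$ is integrable because $\big||\xi|^\alpha\varphi(\xi)-|\eta|^\alpha\varphi(\eta)\big|\lesssim|\xi-\eta|$ locally (the function $|\xi|^\alpha\varphi(\xi)$ is Lipschitz away from $0$, and near $0$ it is $C^1$ when $\alpha\geq 1$ and Hölder of order $\min(\alpha,1)$ otherwise — in all cases enough to beat $|\xi-\eta|^{1+2\theta}$ for $\theta<1/2$... one must be slightly careful for $\theta\geq 1/2$, handled by the finer mean-value bound above).

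The $L^2$ characterization \eqref{Dstein4} then follows immediately by integrating the pointwise asymptotics: near $\eta=0$ the function behaves like $|\eta|^{\alpha-\theta}$, which is square-integrable on a neighborhood of the origin precisely when $2(\alpha-\theta)>-1$, i.e. $\theta<\alpha+1/2$; the logarithmic borderline case $\alpha=\theta$ is square-integrable near $0$ and also satisfies $\theta=\alpha<\alpha+1/2$, so it is included; and at infinity the decay $|\eta|^{-(1/2+\theta)}$ gives square-integrability since $1+2\theta>1$. Finally, \eqref{Dstein1} is proved by exactly the same decomposition: the only change is that near the origin $|\xi|^\alpha\sgn(\xi)$ is an odd function, but the mean value / comparison estimates $\big||\xi|^\alpha\sgn(\xi)-|\eta|^\alpha\sgn(\eta)\big|\lesssim|\xi-\eta|\,\max(|\xi|,|\eta|)^{\alpha-1}$ for $\xi,\eta$ of the same sign (and the trivial bound when they have opposite signs, the extra contribution being lower order) reproduce the same powers of $|\eta|$. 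The main obstacle is the bookkeeping in the intermediate region $|\eta|/2\leq|\xi-\eta|\leq 1$ for $|\eta|\ll 1$: one must carefully separate the homogeneous leading term from the bounded corrections and track the borderline logarithm, which requires a clean comparison between $|\xi|^\alpha$ and its value at $\eta$ across scales.
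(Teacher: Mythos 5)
Your decomposition (mean value estimate on $|\xi-\eta|\le|\eta|/2$, direct comparison on the intermediate range, and support/decay considerations for $|\xi-\eta|\ge 1$ and for $|\eta|\gg 1$) is exactly the mechanism behind the proof the paper invokes, namely Proposition 2.9 of \cite{FLP1}, and it is the same splitting the paper itself carries out in detail for the companion Proposition \ref{DsteinL3}; the stated asymptotics and the $L^2$ criteria \eqref{Dstein4}--\eqref{Dstein1} then follow as you indicate. The argument is essentially correct; just make the lower bound near the origin explicit (e.g.\ by restricting the integral to $\xi\in(2\eta,3\eta)$, where the difference is comparable to $\eta^\alpha$ and the distance to $\eta$), since the ``only if'' direction requires the two-sided behavior, and note that the opposite-sign contribution in the $\sgn$ case is of the same (not lower) order, which is harmless.
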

\begin{proof}
See Proposition 2.9 in \cite{FLP1}.
\end{proof}

Note that in the above proposition we are always taking $\alpha>0$. However, in the proof of our main results we also need $\alpha<0$. This is the content of the next two results.

\begin{proposition}\label{DsteinL2}
If $\gamma \in [0,1/2)$ then
\begin{equation}\label{Dstein2}
\mathcal{D}^\gamma (|\xi|^{\gamma-1/2}\varphi (\xi))\notin L^{2}(\R),
\end{equation}
where by $\mathcal{D}^0$ we mean the identity operator.
\end{proposition}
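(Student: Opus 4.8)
The plan is to show that the function $g(\xi):=|\xi|^{\gamma-1/2}\varphi(\xi)$ has a Stein derivative $\mathcal{D}^\gamma g$ that fails to be square integrable near the origin, by producing a pointwise lower bound of the form $\mathcal{D}^\gamma g(\eta)\gtrsim |\eta|^{-1/2}$ for $0<|\eta|\ll 1$, which is exactly the borderline non-$L^2$ decay. First I would recall, as in Proposition \ref{Dstein}, that since $\varphi\equiv1$ on $(-1,1)$, for $|\eta|$ small the quantity $\mathcal{D}^\gamma g(\eta)^2=\int_\R \frac{|g(\eta)-g(\zeta)|^2}{|\eta-\zeta|^{1+2\gamma}}\,d\zeta$ is, up to harmless contributions from $|\zeta|$ of order one, governed by the local integral $\int_{|\zeta|<1/2}\frac{\big||\eta|^{\gamma-1/2}-|\zeta|^{\gamma-1/2}\big|^2}{|\eta-\zeta|^{1+2\gamma}}\,d\zeta$. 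The key point that makes this case different from Proposition \ref{Dstein} is that the exponent $\gamma-1/2$ is negative, so $|\zeta|^{\gamma-1/2}$ blows up as $\zeta\to0$ and the singularity of the integrand at $\zeta=0$ is no longer integrable against the weight in the way it was for $\alpha>0$.

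The main computation I would carry out is a scaling/change-of-variables argument: writing $\zeta=|\eta|\tau$ (treating $\eta>0$ by symmetry), one gets
\[
\int_{|\zeta|<1/2}\frac{\big||\eta|^{\gamma-1/2}-|\zeta|^{\gamma-1/2}\big|^2}{|\eta-\zeta|^{1+2\gamma}}\,d\zeta
= |\eta|^{-1}\int_{|\tau|<1/(2|\eta|)}\frac{\big|1-|\tau|^{\gamma-1/2}\big|^2}{|1-\tau|^{1+2\gamma}}\,d\tau.
\]
So $\mathcal{D}^\gamma g(\eta)^2\gtrsim |\eta|^{-1}\, I(\eta)$ where $I(\eta)=\int_{|\tau|<1/(2|\eta|)}\frac{(1-|\tau|^{\gamma-1/2})^2}{|1-\tau|^{1+2\gamma}}\,d\tau$. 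I would then check that $I(\eta)$ stays bounded below (indeed it tends to a positive, possibly infinite, limit) as $\eta\to0$: near $\tau=0$ the integrand behaves like $|\tau|^{2\gamma-1}$, which is integrable since $2\gamma-1>-1$; near $\tau=1$ the numerator vanishes to order $(1-\tau)^2$ (or better), killing the $|1-\tau|^{-1-2\gamma}$ singularity for $\gamma<1/2$; and for large $\tau$ the integrand decays like $|\tau|^{-2-2\gamma}$. Hence $I(\eta)\geq c_0>0$ for all small $\eta$, giving $\mathcal{D}^\gamma g(\eta)\gtrsim |\eta|^{-1/2}$, and therefore $\int_{|\eta|<1}|\mathcal{D}^\gamma g(\eta)|^2\,d\eta\gtrsim \int_{|\eta|<1}|\eta|^{-1}\,d\eta=\infty$, proving $\mathcal{D}^\gamma g\notin L^2(\R)$. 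The case $\gamma=0$ is immediate since then $\mathcal{D}^0 g=g=|\xi|^{-1/2}\varphi(\xi)$ and $\int_{|\xi|<1}|\xi|^{-1}\,d\xi=\infty$.

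I would expect the main obstacle to be making rigorous the reduction that discards the region $|\zeta|\geq 1/2$ and the tail $|\tau|\geq 1/(2|\eta|)$ without accidentally discarding a cancellation — but this is benign, because we only need a lower bound and all discarded integrands are nonnegative, so one simply restricts the domain of integration. A secondary subtlety is the behavior of the integrand of $I(\eta)$ near $\tau=1$: one must verify that $1-|\tau|^{\gamma-1/2}=O(|1-\tau|)$ there, which follows from the mean value theorem since $|\xi|^{\gamma-1/2}$ is smooth away from $0$, so $(1-|\tau|^{\gamma-1/2})^2|1-\tau|^{-1-2\gamma}=O(|1-\tau|^{1-2\gamma})$ is integrable for $\gamma<1/2$. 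Finally, to conclude continuity-type statements one can appeal directly to Proposition \ref{Dstein}'s method; but for the present proposition only the quantitative lower bound near the origin is needed, so I would keep the argument confined to that estimate.
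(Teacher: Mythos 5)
Your proposal is correct, and it is self-contained, which is worth noting since the paper itself does not prove Proposition \ref{DsteinL2} but only cites Proposition 2.11 of \cite{dBO}. Your scaling argument is a clean way to get exactly what is needed: restricting to $|\zeta|<1/2$ (legitimate, since the integrand is nonnegative and only a lower bound is required, and $\varphi\equiv1$ there), the substitution $\zeta=\eta\tau$ gives $\mathcal{D}^\gamma g(\eta)^2\geq \eta^{-1}I(\eta)$ with $I(\eta)$ nondecreasing as $\eta\downarrow0$, so $I(\eta)\geq I(\eta_0)>0$ for $\eta\leq\eta_0$ and the logarithmic divergence of $\int_0^{\eta_0}\eta^{-1}\,d\eta$ finishes the proof; the $\gamma=0$ case is immediate as you say. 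This is consistent with, and in effect reproves the lower-bound half of, the small-$\eta$ asymptotics $\mathcal{D}^\theta(|\xi|^\alpha\varphi)(\eta)\sim c|\eta|^{\alpha-\theta}+c_1$ recorded in Proposition \ref{Dstein}, here with $\alpha=\gamma-1/2$, $\theta=\gamma$; the cited proof proceeds by that kind of region-by-region asymptotic analysis (as in Proposition 2.9 of \cite{FLP1} and as in the paper's own Proposition \ref{DsteinL3}), whereas your argument avoids the upper bounds altogether, which is all the non-membership statement requires. One harmless slip: for large $|\tau|$ the integrand of $I(\eta)$ decays like $|\tau|^{-1-2\gamma}$, not $|\tau|^{-2-2\gamma}$; this is still integrable for $\gamma>0$, and in any case finiteness of $I$ is irrelevant for the lower bound, so nothing in the proof is affected.
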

\begin{proof}
See Proposition 2.11 in \cite{dBO}.
\end{proof}

\begin{proposition}\label{DsteinL3}
If $\gamma \in (0,1/2)$ and $0<\epsilon<\gamma$ then
\begin{equation}\label{Dstein3}
\mathcal{D}^{\gamma-\epsilon} (|\xi|^{\gamma-1/2}\varphi (\xi))\in L^{2}(\R).
\end{equation}
\end{proposition}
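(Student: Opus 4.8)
The plan is to verify the claimed $L^2$-membership of $\mathcal{D}^{\gamma-\epsilon}(|\xi|^{\gamma-1/2}\varphi(\xi))$ by a direct pointwise analysis of the Stein derivative, splitting the integral defining $\mathcal{D}^b$ into a near-diagonal piece and a far piece, exactly in the spirit of the proof of Proposition \ref{Dstein} (Proposition 2.9 in \cite{FLP1}) and Proposition \ref{DsteinL2}. Write $b=\gamma-\epsilon\in(0,\gamma)\subset(0,1/2)$ and $\alpha=\gamma-1/2<0$, and set $h(\xi)=|\xi|^{\alpha}\varphi(\xi)$. Since $\varphi$ is supported in $[-2,2]$ and is $\equiv 1$ on $(-1,1)$, the function $h$ is smooth and bounded away from the origin, so all the work is localized near $\xi=0$; for $|\eta|\gg 1$ one has $h\equiv 0$ in a neighborhood of $\eta$ and the integrand decays like $|\xi-\eta|^{-1-2b}$ at infinity, giving the familiar $|\eta|^{-1/2-b}$ bound there, which is square-integrable since $b>0$. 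The heart of the matter is the behavior for $|\eta|\ll 1$.

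First I would establish the pointwise estimate $\mathcal{D}^{b}h(\eta)\lesssim |\eta|^{\alpha-b}+c_1$ for $0<|\eta|\ll 1$. To do this, decompose
\[
\big(\mathcal{D}^{b}h(\eta)\big)^2=\int_{|\xi-\eta|<|\eta|/2}\frac{|h(\xi)-h(\eta)|^2}{|\xi-\eta|^{1+2b}}\,d\xi+\int_{|\xi-\eta|\geq|\eta|/2}\frac{|h(\xi)-h(\eta)|^2}{|\xi-\eta|^{1+2b}}\,d\xi=:A(\eta)+B(\eta).
\]
On the region $|\xi-\eta|<|\eta|/2$ we have $|\xi|\sim|\eta|$, so $|h(\xi)-h(\eta)|\lesssim |\eta|^{\alpha-1}|\xi-\eta|$ by the mean value theorem applied to $|\cdot|^{\alpha}$ (here $\varphi\equiv1$), and integrating $|\eta|^{2\alpha-2}|\xi-\eta|^{1-2b}$ over $|\xi-\eta|<|\eta|/2$ yields $A(\eta)\lesssim |\eta|^{2\alpha-2}\cdot|\eta|^{2-2b}=|\eta|^{2\alpha-2b}$. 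For $B(\eta)$, split further into $|\eta|/2\le |\xi-\eta|$ with $|\xi|\lesssim 1$ and the tail; using $|h(\xi)-h(\eta)|\le |h(\xi)|+|h(\eta)|\lesssim |\xi|^{\alpha}+|\eta|^{\alpha}$ and the integrability of $|\xi|^{2\alpha}$ near $0$ (valid because $2\alpha=2\gamma-1>-1$), together with $\int_{|\xi-\eta|\ge|\eta|/2}|\eta|^{2\alpha}|\xi-\eta|^{-1-2b}d\xi\lesssim |\eta|^{2\alpha-2b}$, one again gets $B(\eta)\lesssim |\eta|^{2\alpha-2b}+c_1$. Hence $\mathcal{D}^{b}h(\eta)\lesssim |\eta|^{\alpha-b}+c_1$ for small $|\eta|$.

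Finally I would integrate: near the origin $|\mathcal{D}^{b}h(\eta)|^2\lesssim |\eta|^{2\alpha-2b}+c_1^2=|\eta|^{2\gamma-1-2(\gamma-\epsilon)}+c_1^2=|\eta|^{2\epsilon-1}+c_1^2$, and $\int_{|\eta|<1}|\eta|^{2\epsilon-1}d\eta<\infty$ precisely because $\epsilon>0$, while $\int_{|\eta|\ge 1}|\eta|^{-1-2b}d\eta<\infty$ because $b=\gamma-\epsilon>0$; on the compact intermediate range $1/2\lesssim|\eta|\lesssim 2$ the function is bounded. Combining the three regions gives $\mathcal{D}^{\gamma-\epsilon}(|\xi|^{\gamma-1/2}\varphi(\xi))\in L^2(\R)$, as claimed. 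The main obstacle is getting the near-diagonal estimate $A(\eta)$ sharp enough: one must use the full cancellation $|h(\xi)-h(\eta)|\lesssim |\eta|^{\alpha-1}|\xi-\eta|$ rather than the crude bound $|h(\xi)|+|h(\eta)|\lesssim|\eta|^{\alpha}$, since the latter would only give $A(\eta)\lesssim|\eta|^{2\alpha-2b}$ after an integral that converges just as well — so in fact the subtlety is purely in confirming the exponent $2\alpha-2b=2\epsilon-1$ is $>-1$, i.e. that the gain $\epsilon>0$ is exactly what rescues integrability at the origin, explaining why Proposition \ref{DsteinL2} fails at $\epsilon=0$.
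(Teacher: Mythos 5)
Your proposal is correct and is essentially the paper's own argument (which in turn follows Proposition 2.9 of \cite{FLP1}): a pointwise estimate of the Stein derivative, with the mean value theorem providing the cancellation on the near-diagonal region and pure size bounds elsewhere, yielding $\big[\mathcal{D}^{\gamma-\epsilon}\big(|\xi|^{\gamma-1/2}\varphi\big)(\eta)\big]^2\lesssim |\eta|^{2\epsilon-1}$ for $|\eta|\ll 1$ and decay of order $|\eta|^{-1-2(\gamma-\epsilon)}$ for $|\eta|\gg 1$, which one then integrates. The only structural difference is the treatment of the region where $\xi$ is far from $\eta$ but close to the singularity of $|\xi|^{\gamma-1/2}$: you exploit directly that $2\alpha=2\gamma-1>-1$, whereas the paper isolates that piece (its integral $I$) and runs a Young's inequality argument with exponents $1<p<-1/(2\gamma_1)$; both work, and your route is arguably more direct. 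Two points to tighten: in your $B(\eta)$, on the subregion $2|\eta|\le|\xi|\le 2$ you must use $|\xi-\eta|\gtrsim|\xi|$ to land on the exponent $2\alpha-2b=2\epsilon-1$ (bounding the kernel merely by $(|\eta|/2)^{-1-2b}$ over all of $|\xi|\lesssim 1$ gives only $|\eta|^{-1-2b}$, which is not integrable near the origin); and your closing remark is off, since inserting the crude bound $|h(\xi)|+|h(\eta)|\lesssim|\eta|^{\alpha}$ into the near-diagonal integral makes it diverge ($|\xi-\eta|^{-1-2b}$ is not locally integrable on the diagonal), so the mean value theorem cancellation you actually used for $A(\eta)$ is indispensable rather than a matter of sharpness.
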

\begin{proof} Here we use the same approach as in the proof of Proposition 2.9 in \cite{FLP1}. By setting $\theta=\gamma-\epsilon$ and $\gamma_1=\gamma-1/2$, we  see that for $\eta \neq 0$, $\mathcal{D}^\theta (|\xi|^{\gamma_1}\varphi(\xi))(\eta)$ is continuous in $\delta<|\eta|<\frac{1}{\delta}$, for all $\delta>0$.
First, we assume $0<\eta<2/3$. Then
	\begin{equation}
	\begin{split}\label{Aeta}
	[\mathcal{D}^\theta (|\xi|^{\gamma_1}\varphi(\xi))(\eta)]^2 &=\int \frac{(|y|^{\gamma_1}\varphi(y) -|\eta|^{\gamma_1}\varphi(\eta))^{2}}{|y-\eta|^{1+2\theta}}dy\\
	&=\int \frac{(|\xi+\eta|^{\gamma_1} \varphi(\xi+\eta)-|\eta|^{\gamma_1}\varphi(\eta))^2}{|\xi|^{1+2\theta}}d\xi\\
	&= \int_0^{\eta/2}+\int_{\eta/2}^\infty+\int_{-\infty}^{-\eta/2}+\int_{-\eta/2}^0\\
	&=: I_1+I_2+I_3+I_4.
	\end{split}
	\end{equation}
	Next, we  deal with the first integral on the right-hand side of \eqref{Aeta}. In view of $0<\eta<\xi+\eta<3\eta/2<1$, it follows that $\varphi(\xi+\eta)=\varphi(\eta)=1$. Hence, by the mean value theorem there exists $z\in(\eta, \xi+\eta)$ such that
	\begin{equation*}
	\begin{split}
	\eta^{\gamma_1}-(\xi+\eta)^{\gamma_1}=-\gamma_1 z^{\gamma_1 -1}\xi
	\les \xi z^{\gamma_1 -1}
	\les \xi \eta^{\gamma_1 -1},
	\end{split}
	\end{equation*}
	where  we used that $\gamma_1<0$ and $\eta<z$. Thus, from \eqref{Aeta}
	\begin{equation*}
	\begin{split}
	I_1 \les \int_0^{\eta/2}\frac{\xi^2 \eta^{2(\gamma_1 -1)}}{|\xi|^{1+2\theta}}d\xi
	\les  \eta^{2(\gamma_1-1)}\int_{0}^{\eta/2}\xi^{1-2\theta}d\xi
	\les  \eta^{2(\gamma_1-1)} (\eta/2)^{2(1-\theta)}
	\les \eta^{2\epsilon-1}.
	\end{split}
	\end{equation*}
Also,
	\begin{equation*}
	\begin{split}
	I_2\leq \int_{\eta/2}^\infty \frac{((\xi+\eta)^{\gamma_1}+\eta^{\gamma_1})^2}{|\xi|^{1+2\theta}}d\xi
	\leq \eta^{2\gamma_1}\int_{\eta/2}^\infty \xi^{-1-2\theta}d\xi
	\les \eta^{2\epsilon-1}.
	\end{split}
	\end{equation*}
	
With respect to $I_4$ we see that $-\eta/2<\xi<0$ implies $\eta/2<\xi+\eta<\eta<2/3$. Using the mean value theorem again we obtain $(\xi+\eta)^{\gamma_1}-\eta^{\gamma_1}\les |\xi|\eta^{\gamma_1 -1}$.
	Thus
	\begin{equation*}
	\begin{split}
	I_4=\int_{-\eta/2}^0 \frac{((\xi+\eta)^{\gamma_1}-\eta^{\gamma_1})^2}{|\xi|^{1+2\theta}}d\xi
	   \les \int_{-\eta/2}^0\frac{\xi^2 \eta^{2(\gamma_1-1)}}{|\xi|^{1+2\theta}}d\xi
	   =\int_0^{\eta/2}\frac{\xi^2 \eta^{2(\gamma_1-1)}}{|\xi|^{1+2\theta}}d\xi
	   \les \eta^{2\epsilon-1}.
     \end{split}
	\end{equation*}
	Concerning $I_3$ we write
	\begin{equation}\label{itt}
	I_3=\int_{-\infty}^{-\eta/2}\frac{(|\xi+\eta|^{\gamma_1}\varphi(\xi+\eta)-\eta^{\gamma_1})^2}{|\xi|^{1+2\theta}}d\xi=\int_{-\infty}^{-2-\eta}+\int_{-2-\eta}^{-\eta/2}=:I_3^1+I_3^2.
    \end{equation}		
In the first integral in \eqref{itt} we have $\varphi(\xi+\eta)=0$. Hence
	\begin{equation*}
	\begin{split}
	I_3^1\leq \int_{-\infty}^{-2-\eta}\frac{\eta^{2\gamma_1}}{|\xi|^{1+2\theta}}d\xi
	=c\,\eta^{2\gamma_1}(2+\eta)^{-2\theta}
	\leq c\, \eta^{2\gamma_1}\eta^{-2\theta}
	=c\,\eta^{2\epsilon-1}.
	\end{split}
	\end{equation*}
	The second integral in \eqref{itt} can be estimated as
	\begin{equation}
	\begin{split}\label{ittt}
	I_3^2\les \int_{-2-\eta}^{-\eta/2}\frac{(\xi+\eta)^{2\gamma_1}}{|\xi|^{1+2\theta}}d\xi+\int_{-2-\eta}^{-\eta/2}\frac{\eta^{2\gamma_1}d\xi}{|\xi|^{1+2\theta}}
	=:I_3^{2,1}+I_3^{2,2}.
	\end{split}
	\end{equation}
Now we have
    \begin{equation*}
	\begin{split}
	I_3^{2,2}=\eta^{2\gamma_1}\int_{\eta/2}^{2+\eta}\xi^{-1-2\theta}d\xi
	=\frac{\eta^{2\gamma_1}}{2\theta}\Big[(\eta/2)^{-2\theta}-(2+\eta)^{-2\theta}\Big]
	\les \eta^{2\epsilon-1}.
	\end{split}
	\end{equation*}
The first integral on the right-hand side of \eqref{ittt} can be decomposed as 
 \begin{equation*}
	\begin{split}
	I_3^{2,1}&=\int_{-2-\eta}^{-\eta}+\int_{-\eta}^{-\eta/2}=:I+\tilde I.
	\end{split}
	\end{equation*}
To estimate  $\tilde I$, by using that $\eta/2\leq |\xi|$ we deduce
\begin{equation*}
	\begin{split}
	\tilde I
	\les \eta^{-1-2\theta}\int_{-\eta}^{-\eta/2}(\xi+\eta)^{2\gamma_1}d\xi
\les 	\eta^{-1-2\theta} \eta^{2\gamma_1+1}
\les \eta^{2\epsilon-1}.
	\end{split}
	\end{equation*}

To deal with the integral $I$ we  choose $p,q$ such that $1<p<-\frac{1}{2\gamma_1}$ and $
\frac{1}{p}+\frac{1}{q}=1$.
Hence, by  Young's inequality we obtain
\begin{equation*}
	\begin{split}
	 I
	&\leq \eta^{-2\theta}\int_{-2-\eta}^{-\eta}\frac{(\xi+\eta)^{2\gamma_1}}{|\xi|}d\xi\\
	&\les \eta^{-2\theta}\left(\int_{-2-\eta}^{-\eta}(\xi+\eta)^{2p\gamma_1}d\xi+\int_{-2-\eta}^{-\eta}\frac{d\xi}{|\xi|^q}\right)\\
&\les \eta^{-2\theta}\big[1+(2+\eta)^{1-q}\big].
	\end{split}
	\end{equation*}
This completes the proof if $0<\eta<2/3$.	The case $-2/3<\eta<0$ may be treated similarly. 
	
	Next we suppose $\eta>200$. Here,
	\begin{equation}
	\begin{split}
	\mathcal{D}^\theta (|\xi|^{\gamma_1}\varphi(\xi))(\eta)^2&=\int \frac{(\xi+\eta)^{2\gamma_1}\varphi(\xi+\eta)^2}{|\xi|^{1+2\theta}}d\xi\\
	&\les \int_{-2-\eta}^{2-\eta}\frac{(\xi+\eta)^{2\gamma_1} d\xi}{|\xi|^{1+2\theta}}\\
	%&=c \int_{\eta-2}^{\eta+2}\xi^{-1-2\theta}d\xi\\
	&\les \frac{1}{(\eta-2)^{1+2\theta}}.
	\end{split}
	\end{equation}
	The case $\eta<-200$ may be treated in a similar fashion. The proof of the proposition is thus completed.
\end{proof}

In the next two results we recall some pointwise estimates we need in the sequel.

\begin{lemma}\label{Pontual1}
Let $b\in (0,1)$. For any $t>0$,
\begin{equation*}
\mathcal{D}^{b}(e^{-itx|x|^{1+a}})\les t^{b/(2+a)}+t^{b}|x|^{(1+a)b}.
\end{equation*}
\end{lemma}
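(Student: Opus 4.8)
The plan is to estimate $\mathcal{D}^b(e^{-itx|x|^{1+a}})(x)$ directly from the integral definition of the Stein derivative, splitting the domain of integration according to the size of $|x-y|$ relative to a parameter that will be optimized. Write $g(x) = e^{-itx|x|^{1+a}}$, so that $|g|\equiv 1$ and $g'(x) = -it(2+a)|x|^{1+a}e^{-itx|x|^{1+a}}$, hence $|g'(x)|\les t|x|^{1+a}$. By definition,
\begin{equation*}
\mathcal{D}^b g(x)^2 = \int_{\R}\frac{|g(x)-g(y)|^2}{|x-y|^{1+2b}}\,dy = \int_{|x-y|\le R}\frac{|g(x)-g(y)|^2}{|x-y|^{1+2b}}\,dy + \int_{|x-y|> R}\frac{|g(x)-g(y)|^2}{|x-y|^{1+2b}}\,dy =: A + B,
\end{equation*}
for a cutoff radius $R>0$ to be chosen. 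For the far piece $B$, I would simply use $|g(x)-g(y)|\le 2$, giving $B \les \int_{|x-y|>R}|x-y|^{-1-2b}\,dy \les R^{-2b}$. For the near piece $A$, I would use the mean value theorem: for $|x-y|\le R$, $|g(x)-g(y)|\le |x-y|\sup_{[x,y]}|g'| \les t|x-y|(|x|^{1+a}+R^{1+a})$, so that
\begin{equation*}
A \les t^2(|x|^{1+a}+R^{1+a})^2\int_{|x-y|\le R}\frac{|x-y|^2}{|x-y|^{1+2b}}\,dy \les t^2(|x|^{1+a}+R^{1+a})^2 R^{2-2b}.
\end{equation*}

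Combining, $\mathcal{D}^b g(x)^2 \les R^{-2b} + t^2|x|^{2(1+a)}R^{2-2b} + t^2 R^{2(1+a)+2-2b}$. Now I would optimize in $R$. Balancing the first two terms gives $R^{2} \sim (t|x|^{1+a})^{-1}$, i.e.\ $R \sim (t|x|^{1+a})^{-1/2}$, which yields a contribution of order $(t|x|^{1+a})^{b} = t^b|x|^{(1+a)b}$ from those two terms. For the third term I would instead balance it against $R^{-2b}$, which gives $R^{2(1+a)+2} \sim 1/t^2$, i.e.\ $R \sim t^{-1/(2+a)}$, producing a contribution of order $t^{b/(2+a)}$. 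Taking $R$ to be the minimum of these two choices (or running the argument twice with each choice) and taking square roots gives $\mathcal{D}^b g(x) \les t^{b/(2+a)} + t^b|x|^{(1+a)b}$, which is the claimed bound. One should double-check that with either choice of $R$ the remaining (unbalanced) term is indeed dominated by the stated right-hand side; this is a routine check since $b<1$ and all exponents are positive.

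The only mild subtlety — and the step I expect to require the most care — is the mean value theorem estimate in $A$: the function $x\mapsto x|x|^{1+a}$ is $C^1$ with derivative $(2+a)|x|^{1+a}$, so there is no problem at the origin, but one must be careful that $\sup_{[x,y]}|g'|$ is controlled by $t(|x|^{1+a}+R^{1+a})$ uniformly, using $|z|\le |x|+R$ for $z$ between $x$ and $y$ and the subadditivity-type bound $(|x|+R)^{1+a}\les |x|^{1+a}+R^{1+a}$ valid since $1+a\ge 1$ (here one uses $(u+v)^{1+a}\le 2^a(u^{1+a}+v^{1+a})$). Everything else is elementary manipulation of the resulting power-law integrals. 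This is precisely the scheme used to prove the analogous Lemma~2.8 in \cite{FLP1} for the one-dimensional dispersion generalized BO equation, adapted to the exponent $1+a$ appearing in the phase here.
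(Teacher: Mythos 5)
The paper offers no proof of this lemma at all (it is quoted from Proposition 2.7 of \cite{FLP1}), and your near/far decomposition of the Stein derivative, with $|g(x)-g(y)|\le 2$ on the far piece and $|g(x)-g(y)|\lesssim t|x-y|\,(|x|^{1+a}+R^{1+a})$ on the near piece, is exactly the scheme behind that cited result, so the strategy is the right one. The trouble is in the optimization over $R$, which is precisely the step you defer as a ``routine check''. Balancing $R^{-2b}$ against $t^{2}|x|^{2(1+a)}R^{2-2b}$ gives $t^{2}|x|^{2(1+a)}R^{2}\sim 1$, i.e.\ $R\sim (t|x|^{1+a})^{-1}$, \emph{not} $R^{2}\sim(t|x|^{1+a})^{-1}$. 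With your radius $R=(t|x|^{1+a})^{-1/2}$ the two terms are not balanced: they are of size $(t|x|^{1+a})^{b}$ and $(t|x|^{1+a})^{1+b}$, and since $1+b>2b$ for $b<1$, the second exceeds the square $t^{2b}|x|^{2(1+a)b}$ of the claimed bound as soon as $t|x|^{1+a}>1$; moreover, in the window $t^{2/(2+a)}\lesssim t|x|^{1+a}\lesssim 1$ (e.g.\ $|x|=t^{-1/(2+a)}$ with $t$ small) even the far term $R^{-2b}=(t|x|^{1+a})^{b}=t^{b/(2+a)}$ is much larger than the squared target $\sim t^{2b/(2+a)}$. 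So with $R=\min\{(t|x|^{1+a})^{-1/2},\,t^{-1/(2+a)}\}$ the argument as written does not yield the lemma, and ``running the argument twice'' cannot repair this, because for each fixed $(x,t)$ a single choice of $R$ must dominate all three terms simultaneously.

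The fix is one line. Take $R=\min\{t^{-1/(2+a)},\,(t|x|^{1+a})^{-1}\}$. If $|x|\le t^{-1/(2+a)}$ then $R=t^{-1/(2+a)}$ and, using $|x|\le R$ in the middle term, all three terms are $\lesssim t^{2}R^{2(2+a)-2b}=t^{2b/(2+a)}$. If $|x|\ge t^{-1/(2+a)}$ then $R=(t|x|^{1+a})^{-1}$, the first two terms are both of size $(t|x|^{1+a})^{2b}$, and the third equals $t^{2}(t|x|^{1+a})^{2b-2(2+a)}\le (t|x|^{1+a})^{2b}$ because $t|x|^{1+a}\ge t^{1/(2+a)}$ in that regime. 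Taking square roots gives exactly $\mathcal{D}^{b}(e^{-itx|x|^{1+a}})\lesssim t^{b/(2+a)}+t^{b}|x|^{(1+a)b}$. Equivalently, bound $|g(x)-g(y)|^{2}\lesssim \min(1,t^{2}r^{2}|x|^{2(1+a)})+\min(1,t^{2}r^{2(2+a)})$ with $r=|x-y|$ and integrate each summand with its own cutoff, $r\sim(t|x|^{1+a})^{-1}$ and $r\sim t^{-1/(2+a)}$ respectively. The remaining ingredients of your write-up, namely $|g'(x)|\lesssim t|x|^{1+a}$, the subadditivity $(|x|+R)^{1+a}\lesssim |x|^{1+a}+R^{1+a}$, and the elementary power-law integrals valid for $0<b<1$, are all correct.
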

\begin{proof}
See Proposition 2.7 in \cite{FLP1}.
\end{proof}

\begin{lemma}\label{P}
Let $b\in (0,1),$ then for all $t>0$ and $\eta\in \R$,
$$
\mathcal{D}^{b}(e^{it\eta^{2}x})\les\eta^{2b}t^{b}.
$$
\end{lemma}
\begin{proof}
	See Lemma 2.9 in \cite{AP}.
\end{proof}

%The next proposition will be used in the proof of Theorems \ref{P1}  and
%\ref{P2}.
%\begin{proposition}\label{localint}
%Let $p\in (1,\infty)$. If $f\in L^{p}(\R)$ is such that  there exists $x_0
%\in \R$ for which $f(x_{0}^{+}),$ $f(x_{0}^{-})$ are defined and
%$f(x_{0}^{+})\neq f(x_{0}^{-}),$ then for any $\delta>0,$
%$\mathcal{D}^{1/p}f\notin L^{p}_{loc}(B(x_0,\delta))$ and consequently
%$f\notin L^{p}_{1/p}(\R).$
%\end{proposition}
%\begin{proof}
%See \cite{jose}.
%\end{proof}

%At last, we recall some results to be used in the proof  Theorems \ref{B1},
%\ref{P1}, and \ref{P2}.

Since we will be dealing with weighted spaces, let us introduce the truncated weights $\langle x\rangle_N$, $N\in \Z^{+}$, by letting
\begin{eqnarray*}
\langle x\rangle_N:=\left\{\begin{array} {lccc}
\langle x \rangle \ \mathrm{if} \  |x|\leq N,\\
2N \ \mathrm{if} \ |x|\geq 3N,
\end{array} \right.
\end{eqnarray*}
where $\langle x \rangle = (1+x^2)^{1/2}$. Also, we assume that $\langle x\rangle_N$
is smooth and non-decreasing in $|x|$ with $\langle x\rangle_N'(x)\leq 1,$ for any
$x\geq 0$, and there exists a constant $c$ independent of $N$ such that
$|\langle x\rangle_N''(x)|\leq c \partial_x^{2}\langle x \rangle.$

\begin{lemma}\label{interx}
Let $\alpha,b>0.$ Assume that $J_{x_i}^{\alpha}f(x_1,x_2)\in L^{2}(\R^2)$ and
$\langle x_j \rangle^b f(x_1,x_2)=(1+x_j^2)^{b/2}f(x_1,x_2)\in L^{2}(\R^2).$ Then, for any
$\beta \in (0,1)$,
\begin{equation}\label{inter1x}
\|J_{x_i}^{\alpha \beta}(\langle x_j \rangle^{(1-\beta)b}f)\|_{L^2_{x_i}}\leq c\|\langle  x_j
\rangle^{b}f\|_{L^2_{x_i}}^{1-\beta}\|J_{x_i}^{\alpha}f\|_{L^2_{x_i}}^{\beta}, \qquad i,j=1,2.
\end{equation}
Moreover,  inequality \eqref{inter1x} is  still valid with $\langle \cdot\rangle_N$
instead of $\langle \cdot \rangle$ with a constant $c$ independent of $N.$
\end{lemma}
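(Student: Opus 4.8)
The plan is to reduce the anisotropic two-variable estimate to a one-variable interpolation inequality applied in the relevant coordinate, and then to establish that one-variable inequality by a standard Stein–Tomas/three-lines type argument in frequency. First I would observe that the claimed inequality \eqref{inter1x} is local in the variable $x_i$ (the norms $\|\cdot\|_{L^2_{x_i}}$ are taken for a.e. fixed value of the other variable $x_j$), so after fixing $x_j$ the weight $\langle x_j\rangle^{b}$ and $\langle x_j\rangle^{(1-\beta)b}$ become constants and can be pulled out; thus it suffices to prove the purely one-dimensional statement that for $g\in L^2(\R)$ with $J^\alpha g\in L^2(\R)$ one has
\[
\|J^{\alpha\beta} g\| \le c\,\|g\|^{1-\beta}\,\|J^\alpha g\|^{\beta},
\]
which is the classical interpolation inequality for Bessel potentials, provable directly on the Fourier side via Hölder's inequality with exponents $1/\beta$ and $1/(1-\beta)$: indeed $\langle\xi\rangle^{2\alpha\beta}=(\langle\xi\rangle^{2\alpha})^{\beta}\cdot 1^{1-\beta}$, so $\int\langle\xi\rangle^{2\alpha\beta}|\hat g|^2 \le \big(\int\langle\xi\rangle^{2\alpha}|\hat g|^2\big)^{\beta}\big(\int|\hat g|^2\big)^{1-\beta}$.

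The only genuinely nontrivial point is that in \eqref{inter1x} the weight sits \emph{inside} the Bessel potential, i.e. one must control $\|J_{x_i}^{\alpha\beta}(\langle x_j\rangle^{(1-\beta)b}f)\|$ rather than $\langle x_j\rangle^{(1-\beta)b}\|J_{x_i}^{\alpha\beta}f\|$. But since $\langle x_j\rangle^{(1-\beta)b}$ is constant in the variable $x_i$ with respect to which $J_{x_i}^{\alpha\beta}$ acts, it commutes with $J_{x_i}^{\alpha\beta}$, and one literally has $J_{x_i}^{\alpha\beta}(\langle x_j\rangle^{(1-\beta)b}f)=\langle x_j\rangle^{(1-\beta)b}J_{x_i}^{\alpha\beta}f$ as functions of $x_i$ for each fixed $x_j$. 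So I would: (i) for a.e. fixed $x_j$, apply the one-dimensional interpolation inequality above with $g(\cdot)=f(\cdot,x_j)$ (in the $x_i$ slot), getting
\[
\|J_{x_i}^{\alpha\beta} f(\cdot,x_j)\|_{L^2_{x_i}} \le c\,\|f(\cdot,x_j)\|_{L^2_{x_i}}^{1-\beta}\,\|J_{x_i}^{\alpha} f(\cdot,x_j)\|_{L^2_{x_i}}^{\beta};
\]
(ii) multiply both sides by $\langle x_j\rangle^{(1-\beta)b}$, writing the right-hand side as $\big(\langle x_j\rangle^{b}\|f(\cdot,x_j)\|_{L^2_{x_i}}\big)^{1-\beta}\big(\|J_{x_i}^{\alpha}f(\cdot,x_j)\|_{L^2_{x_i}}\big)^{\beta}$; (iii) take the $L^2_{x_j}$ norm of both sides and apply Hölder's inequality in $x_j$ with exponents $1/(1-\beta)$ and $1/\beta$ to factor the product, obtaining exactly the right-hand side of \eqref{inter1x}.

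For the final sentence about the truncated weight $\langle\cdot\rangle_N$, I would note that the only properties of the weight used in the argument are that it is a positive function of $x_j$ alone, hence commutes with $J_{x_i}^{\alpha\beta}$; this is true verbatim for $\langle x_j\rangle_N^{b}$, and since no derivative of the weight enters, the constant $c$ is the one from the one-dimensional Bessel interpolation inequality plus the Hölder step, which is manifestly independent of $N$. I do not expect any real obstacle here; the ``hard part'', if any, is purely bookkeeping — making sure that the exponents $1-\beta$ and $\beta$ in the two Hölder applications are matched correctly and that the a.e.-in-$x_j$ inequalities are measurable enough to integrate, both of which are routine. (Alternatively, one may cite this as a consequence of the Stein–Weiss/complex interpolation machinery exactly as in the references, but the slicing argument above is the most transparent.)
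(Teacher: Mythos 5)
Your argument proves only the case $i\neq j$ of \eqref{inter1x}, but the lemma as stated (and as used throughout the paper) includes the case $i=j$, and that is precisely the nontrivial one. When $i=j$ the weight $\langle x_i\rangle^{(1-\beta)b}$ is a function of the very variable on which $J_{x_i}^{\alpha\beta}$ acts, so your key step --- ``since $\langle x_j\rangle^{(1-\beta)b}$ is constant in the variable $x_i$ it commutes with $J_{x_i}^{\alpha\beta}$'' --- is false there, and the slicing-plus-Plancherel argument collapses: there is a genuine commutator between the weight and the Bessel potential, and controlling it is the whole content of the lemma. Note that the paper invokes exactly this same-variable case repeatedly, e.g.\ in \eqref{teoZby} (weight $\langle y\rangle_N^{r_2}$ against $J_y^{2r_2}$), in \eqref{A211} for $A_{2,1}^2$ (weight and $J_\xi$ both in $\xi$), and in the bounds for $B_{5,2}$, $C_{7,2}$, etc.; so a proof that omits $i=j$ misses the main case.

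For $i=j$ the inequality is Lemma 4 of \cite{NahasPonce}, which is what the paper cites; the proof there is not an $L^2$-Fourier interpolation but a complex-interpolation (Hadamard three lines / Stein analytic family) argument applied to $z\mapsto J_{x_i}^{z\alpha}\bigl(\langle x_i\rangle_N^{(1-z)b}f\bigr)$, with the endpoints $\operatorname{Re}z=0$ and $\operatorname{Re}z=1$ controlled by $\|\langle x_i\rangle_N^{b}f\|$ and $\|J_{x_i}^{\alpha}f\|$ and with at most polynomial growth of the imaginary-power factors $J_{x_i}^{i\tau\alpha}$, $\langle x_i\rangle_N^{i\tau b}$, uniformly in $N$; this uniformity is also where the $N$-independence of the constant comes from, and it is not automatic the way it is in your commuting case. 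Your treatment of $i\neq j$ (commute the weight, interpolate on the Fourier side in $x_i$, Hölder in $x_j$) is correct and is a more elementary route to that sub-case than the paper's remark that ``the same proof holds,'' but as written the proposal leaves the heart of the lemma unproved; you should either reproduce the three-lines argument of \cite{NahasPonce} for $i=j$ (including the truncated weights) or cite it explicitly for that case.
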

\begin{proof}
For the case $i=j$ see Lemma 4 in \cite{NahasPonce}. However, the same proof holds with $i\neq j$.
\end{proof}

To establish some of our estimates in next sections we need the following computations.
Set
\begin{equation}\label{psidef}
 \psi(\xi,\eta,t)=e^{it\xi(\eta^2-|\xi|^{1+a})}.
\end{equation}
Then
\begin{equation}\label{F1}
\partial_{\xi}(\psi\ha)=\psi\big[it(\eta^2 - (2+a)|\xi|^{a+1})\hat{\phi}+\p_\xi \ha \big],
\end{equation}

\begin{equation}
\begin{split}\label{F2}
\p_\xi^{2}(\psi\ha)&= \psi \big[-t(i(2+a)(1+a)\sgn(\xi)|\xi|^a + t(2+a)^2|\xi|^{2(1+a)}-\\
          &\quad - 2t(2+a)|\xi|^{1+a}\eta^2 +t\eta^4)\ha + 2it(\eta^2-(2+a)|\xi|^{1+a})\p_\xi \ha + \p_\xi ^{2}\ha \big]\\
           &=: F_1+\cdot\cdot\cdot + F_7,
\end{split}
\end{equation}
\begin{equation}
\begin{split}\label{F3}
\p_\xi^{3}(\psi\ha)= \ &\psi \Bigg \{\Big[3(2+a)(1+a)\sgn(\xi)t^2\eta^2 |\xi|^a- 3it^3(2+a)^2 \eta^2|\xi|^{2(1+a)}\\
          &+it^3(2+a)^3 |\xi|^{3(1+a)}+ 3it^3(2+a)|\xi|^{1+a}\eta^4-it^3\eta^6-i t a(2+a)(1+a)|\xi|^{a-1}\\
                     & -3t^2(2+a)^2 (1+a)\sgn(\xi)|\xi|^{1+2a}\Big] \ha +\Big[-3it(2+a)(1+a)\sgn(\xi)|\xi|^a \\
           & -3t^2(2+a)^2 |\xi|^{2(1+a)}+6t^2(2+a)\eta^2|\xi|^{1+a}-3t^2\eta^4\Big ]\p_\xi \ha\\
           & +\Big[3it\eta^2 -3it (2+a)|\xi|^{1+a} \Big]\p_\xi^2 \ha+\p_\xi^3 \ha \Bigg\}\\
          =:& \ G_1 + \cdot\cdot\cdot + G_{14},
\end{split}
\end{equation}

\begin{equation}
\begin{split}\label{F5} 
\p_\xi^{4}(\psi\ha)=&\psi \Bigg \{\Big[4a(2+a)(1+a)t^2\eta^2 |\xi|^{a-1}-(7a+3)(1+a)(2+a)^2t^2|\xi|^{2a}+\\
&-9i(1+a)(2+a)^2 t^3\si\eta^2|\xi|^{1+2a}-6i(2+a)^3(1+a)t^3\si|\xi|^{2+3a} +\\
&+6i(2+a)(1+a)t^3\sgn(\xi)\eta^4 |\xi|^a-ita(2+a)(a^2-1)\sgn(\xi)|\xi|^{a-2} \\
& +6t^4(2+a)^2 \eta^4 |\xi|^{2(1+a)}-4(2+a)^3t^4\eta^2|\xi|^{3(1+a)}\\
&+t^4 (2+a)^4 |\xi|^{4(1+a)}-4 t^4(2+a)\eta^6 |\xi|^{1+a}+\\
&+t^4 \eta^8\Big]\ha+\Big[-4ita(2+a)(1+a)|\xi|^{a-1}-12t^2(2+a)^2(1+a)\sgn(\xi)|\xi|^{1+2a}+\\
&+12t^2(1+a)(2+a)\sgn(\xi)\eta^2|\xi|^a +12it^3(2+a)\eta^4 |\xi|^{1+a}+\\
&-12it^3(2+a)^2\eta^2|\xi|^{2(1+a)}+4it^3 (2+a)^3 |\xi|^{3(1+a)}-4it^3 \eta^6\Big]\p_\xi \ha +\\
&+6\Big[-it(2+a)(1+a)\sgn(\xi)|\xi|^a -t^2\eta^4 -t^2 (2+a)^2 |\xi|^{2(1+a)}+2t^2(2+a)\eta^2|\xi|^{1+a}\Big]\p_\xi^2 \ha \\
&+\Big[ 4it \eta^2-4it(2+a)|\xi|^{1+a} \Big]\p_\xi^3 \ha +\p_\xi^4 \ha\Bigg\}\\
=:&H_1+ \cdot\cdot\cdot + H_{25}.
\end{split}
\end{equation}
Note that $F_j$, $G_j$ and $H_j$ depends on $\xi,\eta,t$ and $\hat{\phi}$, that is, $F_j=F_j(\xi,\eta,t,\hat{\phi})$, $G_j=G_j(\xi,\eta,t,\hat{\phi})$ and $H_j=H_j(\xi,\eta,t,\hat{\phi})$.

We end this section with two important estimates that will be used several times in the proof of our main results.

\begin{lemma}\label{DF}Let $\psi$ be as in \eqref{psidef}. For all $\theta \in (0,1)$ and $t\in (0,\infty)$,
\begin{equation*}
\|\Dt(\psi \hat{f})\|\lesssim \rho(t)\Big(\|f\|+\|D_y^{2\theta}f\|+\|D_x^{(1+a)\theta}f\|\Big)+\||x|^\theta f\|,
\end{equation*}
where $\rho(t)=1+t^\theta+t^{\frac{\theta}{2+\theta}}$.
\end{lemma}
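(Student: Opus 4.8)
The plan is to estimate $\Dt(\psi\hat f)(\xi,\eta)$ by writing $\Dt$ as a derivative in the $\xi$ variable only (treating $\eta$ as a parameter) and then distributing it across the product $\psi\hat f$ via the Leibniz-type pointwise inequality \eqref{Leib}, which gives
\[
\|\Dt(\psi\hat f)\| \leq \|\hat f\,\mathcal D^\theta_\xi\psi\| + \|\psi\,\mathcal D^\theta_\xi\hat f\|.
\]
Since $|\psi|\equiv 1$, the second term is exactly $\|\mathcal D^\theta_\xi\hat f\|$, which by the one-dimensional Stein characterization (Theorem \ref{stein}, applied in $\xi$ with $\eta$ frozen, then integrated in $\eta$) is controlled by $\|f\|+\|D_x^{(1+a)\theta}f\|$ — wait, more precisely $\|D^\theta_\xi\hat f\|\simeq\|J^\theta_\xi\hat f\|-\|\hat f\|\lesssim\||x|^\theta f\|+\|f\|$; this is where the genuinely weighted term $\||x|^\theta f\|$ enters. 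So I would be careful to split $\mathcal D^\theta_\xi\hat f$ using $\|J^\theta_\xi\hat f\|\simeq\|\hat f\|+\|\mathcal D^\theta_\xi\hat f\|$ and recognize $J^\theta_\xi\hat f = \widehat{\langle x\rangle^\theta f}$ up to the usual equivalence, bounding $\|\langle x\rangle^\theta f\|\lesssim\|f\|+\||x|^\theta f\|$.

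The heart of the matter is the first term, $\|\hat f\,\mathcal D^\theta_\xi\psi\|$. Here $\psi(\xi,\eta,t)=e^{it\xi(\eta^2-|\xi|^{1+a})}=e^{it\xi\eta^2}e^{-it\xi|\xi|^{1+a}}$, a product of two unimodular factors, so by \eqref{Leib} again (or just the triangle inequality after a pointwise product rule for $\mathcal D^\theta$, using that each factor is bounded)
\[
\mathcal D^\theta_\xi\psi(\xi,\eta) \lesssim \mathcal D^\theta_\xi\big(e^{it\xi\eta^2}\big)(\xi) + \mathcal D^\theta_\xi\big(e^{-it\xi|\xi|^{1+a}}\big)(\xi).
\]
For the first factor I invoke Lemma \ref{P}, which gives $\mathcal D^\theta_\xi(e^{it\eta^2\xi})\lesssim \eta^{2\theta}t^\theta$ (pointwise, uniformly in $\xi$); multiplying by $\hat f$ and taking $L^2_{\xi\eta}$ yields $t^\theta\||\eta|^{2\theta}\hat f\| = t^\theta\|D_y^{2\theta}f\|$. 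For the second factor I invoke Lemma \ref{Pontual1} (with $x$ replaced by $\xi$), which gives $\mathcal D^\theta_\xi(e^{-it\xi|\xi|^{1+a}})\lesssim t^{\theta/(2+a)}+t^\theta|\xi|^{(1+a)\theta}$; multiplying by $\hat f$ and integrating produces $t^{\theta/(2+a)}\|f\| + t^\theta\||\xi|^{(1+a)\theta}\hat f\| = t^{\theta/(2+a)}\|f\|+t^\theta\|D_x^{(1+a)\theta}f\|$. Collecting all contributions and absorbing the powers of $t$ into $\rho(t)=1+t^\theta+t^{\theta/(2+a)}$ (noting $t^{\theta/(2+a)}$ dominates or is comparable to the stray exponents for the purposes of the stated bound — one checks $\theta/(2+\theta)$ in the statement should read $\theta/(2+a)$, matching Lemma \ref{Pontual1}), gives the claimed inequality.

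The main obstacle — really the only subtle point — is justifying the pointwise product rule $\mathcal D^\theta_\xi(fg)\lesssim |f|\,\mathcal D^\theta_\xi g + |g|\,\mathcal D^\theta_\xi f$ at the level needed here: \eqref{Leib} is stated as an $L^2$ product estimate, and Lemma \ref{Leibnitz} handles products with a bounded function, so I would apply \eqref{Leibh} with $h=e^{it\xi\eta^2}$ (bounded, with $\mathcal D^\theta h \lesssim \eta^{2\theta}t^\theta$ by Lemma \ref{P}) and $f$ replaced by $e^{-it\xi|\xi|^{1+a}}\hat f$, then iterate with \eqref{Leibh} again taking $h=e^{-it\xi|\xi|^{1+a}}$ — but Lemma \ref{Pontual1} only controls $\mathcal D^\theta$ of this second factor, not its sup norm of $\mathcal D^\theta$, which is infinite. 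The clean fix is to instead apply \eqref{Leib} directly: $\|\mathcal D^\theta_\xi(\psi\hat f)\|\leq\|\psi\,\mathcal D^\theta_\xi\hat f\|+\|\hat f\,\mathcal D^\theta_\xi\psi\|$ and then $\mathcal D^\theta_\xi\psi \leq \mathcal D^\theta_\xi(e^{it\xi\eta^2}) + \mathcal D^\theta_\xi(e^{-it\xi|\xi|^{1+a}})$ pointwise by the subadditivity of $h\mapsto\mathcal D^\theta h$ under multiplication of unimodular functions (write $e^{iA}-e^{iB}$ telescoped as $(e^{iA}-e^{iA'})+(e^{iA'}-e^{iB})$ inside the defining integral and use $|e^{i\alpha}|=1$), after which everything is pointwise in $(\xi,\eta)$ and the integration in $(\xi,\eta)$ is routine. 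I would devote the bulk of the written proof to laying out this splitting carefully and then simply cite Lemmas \ref{P} and \ref{Pontual1} for the two resulting pointwise bounds.
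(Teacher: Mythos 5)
Your proposal is correct and follows essentially the paper's own argument: the paper applies the product estimate \eqref{Leib} twice (first peeling off $e^{it\xi\eta^2}$, then $e^{-it\xi|\xi|^{1+a}}$), invokes Lemmas \ref{P} and \ref{Pontual1} pointwise, and uses Plancherel for the remaining $\|\mathcal{D}^\theta_\xi\hat f\|$ term, which is exactly your scheme up to replacing the second application of \eqref{Leib} by your pointwise telescoping of unimodular factors. Your remark that the exponent $\theta/(2+\theta)$ in $\rho(t)$ should read $\theta/(2+a)$ (the exponent produced by Lemma \ref{Pontual1}) points to a genuine misprint that the paper's proof carries as well; it is harmless since $t$ stays in a bounded interval.
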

\begin{proof}
Using \eqref{Leib} and Lemmas \ref{P} and \ref{Pontual1},
\begin{equation*}
\begin{split}
\|\Dt(\psi \hat{f})\|&\lesssim \|\Dt (e^{it\xi\eta^2})e^{-it\xi|\xi|^{1+a}}\hat{f}\|+\|e^{it\xi\eta^2}\Dt(e^{-it\xi|\xi|^{1+a}}\hat{f})\|\\
&\lesssim t^\theta \|\eta^{2\theta}\hat{f}\|+\|\Dt(e^{-it\xi|\xi|^{1+a}})\hat{f}\|+\|e^{-it\xi|\xi|^{1+a}}\Dt \hat{f}\|\\
&\lesssim t^\theta \|\eta^{2\theta}\hat{f}\|+\|(t^{\frac{\theta}{2+\theta}}+t^\theta |\xi|^{(1+a)\theta})\hat{f}\|+\|\Dt \hat{f}\|\\
&\lesssim \rho(t)\Big(\|\hat{f}\|+\|\eta^{2\theta}\hat{f}\|+\||\xi|^{(1+a)\theta}\hat{f}\|\Big)+\||D_\xi^\theta \hat{f}\|.
\end{split}
\end{equation*}
Then, Plancherel's identity gives us the desired result.
\end{proof}

For the next result we set
\begin{equation}\label{chi}
\chi(\xi,\eta)=\varphi(\xi)\varphi(\eta),
\end{equation}
where $\varphi$ is given by \eqref{varphi}.

\begin{lemma}\label{Dchip} For all $\theta \in (0,1)$, $t\in [0,\infty)$, $\sigma_1\in \{0,1\}$, $\sigma_2\geq 1$ and $\sigma_3\geq 0$, it follows that
\begin{equation}\label{xi}
\|\Dt\big(\chi(\xi,\eta)\psi \sgn(\xi)^{\sigma_1}|\xi|^{\sigma_2}\eta^{\sigma_3} \hat{f}\big)\|\lesssim\|f\|+\||x|^\theta f\|,
\end{equation}
and
\begin{equation}\label{eta}
\|\Dt\big(\chi(\xi,\eta)\psi \eta^{\sigma_3} \hat{f}\big)\|\lesssim\|f\|+\||x|^\theta f\|,
\end{equation}
where the implicit constants depend on $t$ and $a$.
Moreover, if $1/2<a<1$ then
\begin{equation}\label{Da}
\|\Dt\big(\chi(\xi,\eta)\psi \eta^{\sigma_3} |\xi|^a \hat{f}\big)\|\lesssim\|J_x^{2a}f\|+\|\lanx^2 f\|+\|\lany^{\sigma_4} f\|,
\end{equation}
where  $\sigma_4>1$ is an arbitrary number.
%where $$\sigma_4>1, \sigma_1\in \{0,1\}, \sigma_2\geq 1, \ \mbox{and} \ \sigma_3\geq 0. $$

This result still holds if we replace $\chi(\xi,\eta)$ by $\tilde \chi(\xi,\eta)=\varphi(\xi)e^{-\eta^2}$.
\end{lemma}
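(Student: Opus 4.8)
\textbf{Proof plan for Lemma \ref{Dchip}.}

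The plan is to treat the three estimates \eqref{xi}, \eqref{eta} and \eqref{Da} by decomposing the Stein derivative $\Dt$ acting on a product via the Leibniz-type inequality \eqref{Leibh}, isolating the cut-off factor from the oscillatory factor $\psi$ and the polynomial weights in $\xi,\eta$. For \eqref{xi} and \eqref{eta}, I would first write $m(\xi,\eta):=\sgn(\xi)^{\sigma_1}|\xi|^{\sigma_2}\eta^{\sigma_3}$ (respectively $m(\xi,\eta)=\eta^{\sigma_3}$) and note that, since $\sigma_2\geq1$ and $\sigma_3\geq0$, the function $\chi m$ is Lipschitz with compact support; hence by \eqref{Lei} (applied in each variable separately, using that $\chi$ localizes to $[-2,2]^2$) one gets $\|\Dt(\chi m)\|_{\li}<\infty$ together with $\|\chi m\|_{\li}<\infty$. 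Combining this with \eqref{Leibh} reduces matters to bounding $\|\Dt(\psi\hat f)\|$ plus lower-order terms, and then Lemma \ref{DF} finishes the job: the point is that the factor $\chi$ has already truncated $\xi$ and $\eta$ to a bounded set, so the terms $\|D_y^{2\theta}f\|$ and $\|D_x^{(1+a)\theta}f\|$ appearing in Lemma \ref{DF} collapse to $\|f\|$ after one more application of \eqref{Leibh} (moving $\chi$ back inside), leaving only $\|f\|+\||x|^\theta f\|$. One must be a little careful to absorb the factor $\rho(t)$ into the implicit constant, which is legitimate since $t$ is fixed.

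The delicate estimate is \eqref{Da}, where the symbol carries the genuinely singular factor $|\xi|^a$ with $a\in(0,1)$: now $\chi(\xi,\eta)|\xi|^a\eta^{\sigma_3}$ is \emph{not} Lipschitz near $\xi=0$, so \eqref{Lei} does not apply directly and a naive Leibniz split loses too much. Here I would exploit the hypothesis $1/2<a<1$ so that $|\xi|^a\varphi(\xi)$ lies in a fractional Sobolev class: by Proposition \ref{Dstein} with $\alpha=a$ and $\theta$ chosen in $(0,1)$ — noting $\theta<a+1/2$ is automatic once $\theta<1<a+1/2$ — we have $\mathcal D^\theta(|\xi|^a\varphi(\xi))\in L^2$, but more importantly, by Theorem \ref{stein} the multiplier $|\xi|^a\varphi(\xi)$ belongs to $L^p_\theta$ for suitable $p$. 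The strategy is: peel off $\psi$ first using \eqref{Leib}, Lemma \ref{P} and Lemma \ref{Pontual1} (exactly as in the proof of Lemma \ref{DF}), reducing to controlling $\|\Dt(\chi\,\eta^{\sigma_3}|\xi|^a\hat f)\|$ in frequency; then apply \eqref{Leib} again to split $\mathcal D^\theta$ between the factor $\varphi(\xi)|\xi|^a\varphi(\eta)\eta^{\sigma_3}$ and $\hat f$. The term where the derivative hits $\hat f$ gives $\||\xi|^a\langle\xi\rangle^{0}\Dt\hat f\|\lesssim \|\Dt\hat f\|$ (since $|\xi|^a\varphi\in L^\infty$), and by Theorem \ref{stein} and Plancherel this is $\lesssim\|J_x^\theta f\|+\||x|^\theta f\|\lesssim\|J_x^{2a}f\|+\||x|^\theta f\|$ using $\theta<1<2a$; the term where the derivative hits the multiplier requires showing $\|\mathcal D^\theta(|\xi|^a\varphi(\xi))\,\varphi(\eta)\eta^{\sigma_3}\hat f\|\lesssim \|J_x^{2a}f\|+\|\langle x\rangle^2 f\|+\|\langle y\rangle^{\sigma_4}f\|$. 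For this last bound I would use that $\mathcal D^\theta(|\xi|^a\varphi)$, being $L^2$ in $\xi$ by Proposition \ref{Dstein}, can be paired against $\hat f$ via Cauchy–Schwarz in $\xi$ after using the Sobolev embedding in the $\xi$-variable $\|\hat f(\cdot,\eta)\|_{L^\infty_\xi}\lesssim \|\hat f(\cdot,\eta)\|_{H^1_\xi}$, which by Plancherel transfers to $\|f\|+\|\langle x\rangle f\|$ — and the extra room ($\langle x\rangle^2$ rather than $\langle x\rangle$) gives slack to absorb the interpolation with $J_x^{2a}$; the $\eta$-integration of $\varphi(\eta)\eta^{\sigma_3}$ against $\hat f$ contributes, again by a one-dimensional Sobolev embedding in $\eta$ with $\sigma_4>1$, the term $\|\langle y\rangle^{\sigma_4}f\|$.

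The main obstacle, then, is the low-frequency singularity of $|\xi|^a$ in \eqref{Da}: keeping track of it forces the quantitative use of $a>1/2$ (so that $\mathcal D^\theta(|\xi|^a\varphi)\in L^2_\xi$ with $\theta$ up to $1$) and the two Sobolev embeddings in the Fourier variables, which is precisely where the asymmetric, somewhat lossy weights $\langle x\rangle^2$ and $\langle y\rangle^{\sigma_4}$ on the right-hand side come from. Finally, for the last sentence of the statement, I would observe that the only properties of $\chi(\xi,\eta)=\varphi(\xi)\varphi(\eta)$ used above are that it is a smooth compactly-supported-in-$\xi$ cut-off (equal to $1$ near $\xi=0$) and that it decays rapidly in $\eta$ with all derivatives bounded; the Gaussian $e^{-\eta^2}$ shares exactly these properties, being Schwartz in $\eta$, so every step — the Leibniz splits \eqref{Leibh} and \eqref{Leib}, the estimates on $\mathcal D^\theta$ of the multiplier, and the Sobolev embedding in $\eta$ — goes through verbatim with $\tilde\chi$ in place of $\chi$.
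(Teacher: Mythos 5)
Your treatment of \eqref{xi} and \eqref{eta} has a genuine gap. After you split off the factor $\chi m$, $m=\sgn(\xi)^{\sigma_1}|\xi|^{\sigma_2}\eta^{\sigma_3}$, via \eqref{Leibh}, the remaining term is $\|\chi m\|_{\li}\,\|\Dt(\psi\hat f)\|$, so the localization provided by $\chi$ has already been discarded; applying Lemma \ref{DF} then produces $\rho(t)\big(\|f\|+\|D_y^{2\theta}f\|+\|D_x^{(1+a)\theta}f\|\big)+\||x|^\theta f\|$, and the two derivative terms are \emph{not} bounded by $\|f\|+\||x|^\theta f\|$ (a function with $\lanx^{\theta}f\in L^2$ need have no Sobolev regularity whatsoever). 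Your proposed remedy --- ``one more application of \eqref{Leibh}, moving $\chi$ back inside'' --- is not a valid step: Leibniz-type inequalities only pull a bounded factor out of a Stein derivative; once the bound has been reduced to global norms of $f$, there is no mechanism that reinserts the truncation. If instead you keep the cutoff inside the norm (using \eqref{Leib}, so the term is $\|\chi m\,\Dt(\psi\hat f)\|$), then Lemma \ref{DF} no longer applies as stated and you would have to estimate the localized Stein derivative by hand. The clean fix --- and the paper's proof --- is not to separate $\psi$ from the cutoff at all: set $h(\xi,\eta)=\chi\psi\,\sgn(\xi)^{\sigma_1}|\xi|^{\sigma_2}\eta^{\sigma_3}$ and note that $h$ and $\partial_\xi h$ are bounded, since $\sigma_2\geq1$ removes the singularity of $\partial_\xi(\xi|\xi|^{\sigma_2-1})$ at the origin and the compact support in $\eta$ is precisely what tames $\partial_\xi\psi=it(\eta^2-(2+a)|\xi|^{1+a})\psi$, whose $\eta^2$ growth is the very source of the $D_y^{2\theta}$ term in Lemma \ref{DF}; then \eqref{Lei} gives $\|\Dt h\|_{\li}\lesssim_{t,a}1$ and \eqref{Leibh} yields $\|f\|+\||x|^\theta f\|$ at once.

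Your argument for \eqref{Da}, by contrast, is essentially sound, though it follows a different route from the paper's: you peel off $\psi$ first (legitimate, since the $\eta^{2\theta}$ and $|\xi|^{(1+a)\theta}$ losses from Lemmas \ref{P} and \ref{Pontual1} are harmless on the support of $\chi$) and then split $\Dt$ between $|\xi|^a\varphi(\xi)\varphi(\eta)\eta^{\sigma_3}$ and $\hat f$, invoking Proposition \ref{Dstein} (valid since $\theta<1\leq a+1/2$) together with Sobolev embeddings in the Fourier variables, which is where $\lanx^2$ and $\lany^{\sigma_4}$ enter. The paper instead applies \eqref{eta} to $D_x^af$, interpolates $\|D_\xi^\theta(|\xi|^a\hat f)\|\lesssim\||\xi|^a\hat f\|+\|\partial_\xi(|\xi|^a\hat f)\|$, and then controls $\||\xi|^{a-1}\hat f\|$ (this is where $a>1/2$ and the embedding producing $\lanx^2$, $\lany^{\sigma_4}$ are used) and $\||\xi|^a\partial_\xi\hat f\|$ via Lemma \ref{interx}; both routes exploit $a>1/2$ comparably, and your closing remark about replacing $\chi$ by $\tilde\chi=\varphi(\xi)e^{-\eta^2}$ is fine. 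Still, the lemma is not proved until the argument for \eqref{xi} and \eqref{eta} is repaired as above.
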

\begin{proof}
We will give the proof of \eqref{xi} with $\sigma_1=1$. The proof of  the other cases are similar. Setting $h(\xi,\eta)=\chi(\xi,\eta)\psi \sgn(\xi)^{\sigma_1}|\xi|^{\sigma_2}\eta^{\sigma_3}$ and noting that $\sgn(\xi)^{\sigma_1}|\xi|^{\sigma_2}=\xi|\xi|^{\sigma_2-1}$, it is easy to see that $h$ together with its derivative with respect to $\xi$ are bounded. So, the result follows as an application of Lemma \ref{Leibnitz}. The proof of \eqref{eta} is similar.

Next we will establish \eqref{Da}. Using \eqref{eta},
\begin{equation}
\begin{split}\label{xi1}
\|\Dt\big(\chi(\xi,\eta)\psi \eta^{\sigma_3} |\xi|^a \hat{f}\big)\|&\lesssim \||\xi|^a \hat f\|+\|D_\xi^\theta(|\xi|^a \hat f)\|\\
&\les \||\xi|^a \hat f\|+\|\p_\xi(|\xi|^a \hat f)\|\\
&\les \||\xi|^a \hat f\|+\||\xi|^{a-1} \hat f\|+\||\xi|^a \p_\xi \hat f\|,
\end{split}
\end{equation}
where  we used the interpolation inequality
$
\|D_\xi^\theta(|\xi|^a \hat f)\|\les_{a,t} \||\xi|^a \hat f\|^{1-\theta}\|\p_\xi(|\xi|^a \hat f)\|^\theta.
$
Now, using Sobolev's embedding 
\begin{equation}
\begin{split}\label{xi2}
\||\xi|^{a-1} \hat f\|&\les \||\xi|^{a-1}\chi \hat f\|+\||\xi|^{a-1}(1-\chi)\hat f\|\\
&\les \||\xi|^{a-1}\chi\|\|\hat f\|_{L^\infty_{\xi\eta}}+\Big\|\frac{1-\chi}{\xi} \Big\|_{L^\infty_{\xi\eta}}\||\xi|^a \hat f\|\\
&\les \|\lanx^2 f\|+\|\lany^{\sigma_4} f\|+\|J_x^{a}f\|.
\end{split}
\end{equation}
In addition, from Lemma \ref{interx} and Plancherel's identity
\begin{equation}\label{xi3}
\||\xi|^a \p_\xi \hat f\|\les \|J_\xi^2 \hat f\|+\|\lan^{2a}\hat f\|\les \|\lanx^2 f\|+\|J_x^{2a}f\|.
\end{equation}
Gathering together \eqref{xi1}--\eqref{xi3} we establish \eqref{Da}.
The proof of the lemma is thus completed.
\end{proof}

%%%%%%%%%%%%%%%%%%%%%%%%%%%%%%%%%%%%%%%%%%%%%%%%%%%%%%%%%%%%%%%%%%%%%%%%%%%%%%%%%%%%%%%%%%%%%%%%%%%%%%%%%%%%%%%%%%%%%%%%%%%%
%%%%%%%%%%%%%%%%%%%%%%%%%%%%%%%%%%%%%%%%%%%%%%%%%%%%%%%%%%%%%%%%%%%%%%%%%%%%%%%%%%%%%%%%%%%%%%%%%%%%%%%%%%%%%%%%%%%%%%%%%%%%
%%%%%%%%%%%%%%%%%%%%%%%%%%%%%%%%%%%%%%%%%%%%%%%%%%%%%%%%%%%%%%%%%%%%%%%%%%%%%%%%%%%%%%%%%%%%%%%%%%%%%%%%%%%%%%%%%%%%%%%%%%%%
%%%%%%%%%%%%%%%%%%%%%%%%%%%%%%%%%%%%%%%%%%%%%%%%%%%%%%%%%%%%%%%%%%%%%%%%%%%%%%%%%%%%%%%%%%%%%%%%%%%%%%%%%%%%%%%%%%%%%%%%%%%%
%%%%%%%%%%%%%%%%%%%%%%%%%%%%%%%%%%%%%%%%%%%%%%%%%%%%%%%%%%%%%%%%%%%%%%%%%%%%%%%%%%%%%%%%%%%%%%%%%%%%%%%%%%%%%%%%%%%%%%%%%%%%
%%%%%%%%%%%%%%%%%%%%%%%%%%%%%%%%%%%%%%%%%%%%%%%%%%%%%%%%%%%%%%%%%%%%%%%%%%%%%%%%%%%%%%%%%%%%%%%%%%%%%%%%%%%%%%%%%%%%%%%%%%%%
%%%%%%%%%%%%%%%%%%%%%%%%%%%%%%%%%%%%%%%%%%%%%%%%%%%%%%%%%%%%%%%%%%%%%%%%%%%%%%%%%%%%%%%%%%%%%%%%%%%%%%%%%%%%%%%%%%%%%%%%%%%%
%%%%%%%%%%%%%%%%%%%%%%%%%%%%%%%%%%%%%%%%%%%%%%%%%%%%%%%%%%%%%%%%%%%%%%%%%%%%%%%%%%%%%%%%%%%%%%%%%%%%%%%%%%%%%%%%%%%%%%%%%%%%
%%%%%%%%%%%%%%%%%%%%%%%%%%%%%%%%%%%%%%%%%%%%%%%%%%%%%%%%%%%%%%%%%%%%%%%%%%%%%%%%%%%%%%%%%%%%%%%%%%%%%%%%%%%%%%%%%%%%%%%%%%%%

\section{Local  well posedness in weighted spaces} \label{localweighted1}

In this section, we prove Theorem \ref{anisogbozk}. So, let us assume that
$\phi\in\mathrm{Z}_{r_1,r_2}^{s}=E^s\cap L^2_{r_1,r_2}$. First of all, we note that the existence of a continuous 
local solution, say $u:[0,T]\to E^s$, is given by Theorem A. 
Thus, we only need to establish the persistence property in $L^2_{r_1,r_2}$. Moreover, once we obtain
the persistence property in $L^2_{r_1,r_2}$, the continuity of $u:[0,T]\to L^2_{r_1,r_2}$ and the
continuity of the map data-solution follow as in \cite[Theorem 1.3]{AP}.

If $r_1=r_2=0$, there is nothing to prove. Hence, we can always assume either $r_1>0$ or $r_2>0$. In addition, recalling that $L^2_{r_1,r_2}=L^2_{r_1,0}\cap L^2_{0,r_2}$ we see that it suffices to prove the persistence in $L^2_{r_1,0}$ and in $L^2_{0,r_2}$.

 %\textit{Case} $a\in (0,1)$.
\vskip.3cm

\noindent {\bf Part 1):}  We will divide in two other cases.

\noindent {\bf Case a).}\noindent {\bf \  Weights in the $y$-direction: persistence in $L^2_{0,r_2}$, $r_2>0$}.

Take  $\phi \in E^s\cap L^2_{0,r_2}$. We multiply  the differential equation
\eqref{gbozk} by $\langle y \rangle_{N}^{2r_2}u$ and integrate on $\R^{2}$ to obtain
\begin{equation}\label{106}
\frac{1}{2}\frac{d}{dt}\|\langle y \rangle_{N}^{r_2}u\|^{2}+
\Big(\langle y \rangle_{N}^{r_2}u,\langle y \rangle_{N}^{r_2}D^{a+1}_x\partial_x u +
\langle y \rangle_{N}^{r_2}u_{xyy}+\langle y \rangle_{N}^{r_2}uu_{x}\Big)=0.
\end{equation}

Let
\begin{equation}\label{Mdef}
M=\sup_{[0,T]}\|u(t)\|_{E^s}.
\end{equation}

Since $\langle y \rangle_{N}$ is independent of $x$ we obtain $\langle y \rangle_{N}^{r_2}D^{a+1}_x\partial_x u=D^{a+1}_x \partial_{x}(\langle y \rangle_{N}^{r_2}u)$. Therefore, taking into account that $D^{a+1}_x\partial_x$ is antisymmetric,
 the contribution of the term $(\langle y \rangle_{N}^{r_2}u,\langle y \rangle_{N}^{r_2}D^{a+1}\partial_x u)$ in \eqref{106} is null. In addition,
 \begin{equation*}
 \big(\langle y \rangle_{N}^{r_2}u,\langle y \rangle_{N}^{r_2}uu_{x}\big)=\frac{1}{3}\int\partial_x(\langle y \rangle_{N}^{2r_2}u^3)=0.
 \end{equation*}

It remains to estimate the middle term in \eqref{106}. To do that, let us first assume $r_2>1/2$.
By Lemma \ref{interx} with $\alpha=2r_2, \
\beta=\frac{1}{2r_2}$, $b=r_2$ and by Young's inequality we see that
\begin{equation}\label{teoZby}
\|J_{y}(\langle y \rangle_{N}^{r_{2}-1/2}u)\|\lesssim \|\langle y \rangle_{N}^{r_2}u\|+\|J^{2r_2}_{y}u\|.
\end{equation}
In a similar fashion,
\begin{equation}\label{teoZbx}
\|J_{x}(\langle y \rangle_{N}^{r_{2}-1/2}u)\|\lesssim \|\langle y \rangle_{N}^{r_2}u\|+\|J_{x}^{2r_2}u\|.
\end{equation}
It is to be clear that to obtain \eqref{teoZby} for instance, we are using Lemma \ref{interx} only in the $y$-direction. In fact, by writing $\|J_{y}(\langle y \rangle_{N}^{r_{2}-1/2}u)\|=\|\|J_{y}(\langle y \rangle_{N}^{r_{2}-1/2}u)\|_{L^2_y}\|_{L^2_x}$, we first use Lemma \ref{interx} in the $y$-direction and then H\"older's inequality in the $x$-variable. An application of Young's inequality then gives \eqref{teoZby}. This kind of argument will be used along the paper without additional comments.

Using integration by parts, the  inequality
$|\partial_{y}\langle y \rangle_{N}^{2r_2}|\lesssim \langle y \rangle_{N}^{2r_{2}-1}$,
\eqref{teoZby} and \eqref{teoZbx} we obtain
\begin{equation}\label{teoZc}
\begin{split}
\int \langle y \rangle_{N}^{2r_2}u\partial_{x}\partial_{y}^{2}u=&\ -\int
\partial_{y}\langle y \rangle_{N}^{2r_2}u\partial_{x}\partial_{y}u  -\underbrace{\int
\langle y \rangle_{N}^{2r_2}\partial_{y}u\partial_{x}\partial_{y}u}_{=0}\\
\lesssim& \
\|\langle y \rangle_{N}^{r_2-1/2}\partial_{x}u\|\|\langle y \rangle_{N}^{r_2-1/2}\partial_{y}u\|\\
 \lesssim &\ \|J_{x}(\langle y \rangle_{N}^{r_2-1/2}u)\|^2 +\|J_{y}(\langle y \rangle_{N}^{r_2-1/2}u)\|^2
+\|\langle y \rangle_{N}^{r_2}u\|^2\\ 
\lesssim & \ \|\langle y \rangle_{N}^{r_2}u\|^2 + \|u\|^2_{H^{2r_2}}\\
\lesssim &\ \|\langle y \rangle_{N}^{r_2}u\|^2+M^2,
\end{split}
\end{equation}
where we used that $E^s\hookrightarrow H^{2r_2}$. 

On the other hand, if $r_2\in(0,1/2]$, we have $|\partial_{y}\langle y \rangle_{N}^{2r_2}|\lesssim \langle y \rangle_{N}^{2r_{2}-1}\lesssim 1$. Hence, as in \eqref{teoZc},
\begin{equation*}
\begin{split}
\int \langle y \rangle_{N}^{2r_2}u\partial_{x}\partial_{y}^{2}u=&\ -\int
\partial_{y}\langle y \rangle_{N}^{2r_2}u\partial_{x}\partial_{y}u  =\int
	\partial_y\langle y \rangle_{N}^{2r_2}\partial_{x}u\partial_{y}u\\
\lesssim & \ 
\|\partial_{x}u\|\|\partial_{y}u\|\lesssim \|u\|^2_{H^1}\lesssim M^2,
\end{split}
\end{equation*}
where now we used that $E^s\hookrightarrow H^1$.
The  implicit constants that appears here and in the rest of the proof  will always be independent of $N$.

From \eqref{106} and the above inequalities  we find that
$$
\frac{d}{dt}\|\langle y \rangle_{N}^{r_2}u\|^{2}\leq c(1+\|\langle y \rangle_{N}^{r_2}u\|^{2}).
$$
So, by the Gronwall lemma (see, for instance, \cite[Theorem 12.3.3]{Hille}), 
$$
\|\langle y \rangle_{N}^{r_2}u\|^{2}\leq \|\langle y \rangle_{N}^{r_2}\phi\|^{2}+tc+c\int_{0}^{t}e^{c(t-t')}(\|\langle y \rangle_{N}^{r_2}\phi\|^{2}+t'c)dt'.
$$
By solving the above integral and using the monotone convergence theorem we get
\begin{equation*}
\|\langle y \rangle^{r_2}u\|^{2}\leq e^{ct}\|\langle y
\rangle^{r_2}\phi\|^{2}+e^{ct}-1.
\end{equation*}
This  proves the persistence property in $L^2_{0,r_2}$.

So in what follows, we only consider weights in the $x$-direction. That is, it remains to show the persistence property in $L^2_{r_1,0}$.\\

\noindent {\bf Case b).} \noindent {\bf Weights in the $x$-direction: persistence in $L^2_{r_1,0}$, $r_1>0$.}\\

Let $r_1\in (0,1].$
Putting $r_1=\theta$, multiplying  
\eqref{gbozk} by $\la^{2\theta}u$ and integrating on $\R^{2}$, we obtain
\begin{equation}\label{106x}
\frac{1}{2}\frac{d}{dt}\|\la^{\theta}u\|^{2}+
\Big(\la^{\theta}u,\la^{\theta}D^{1+a}_x\partial_{x}u+
\la^{\theta}u_{xyy}+\la^{\theta}uu_{x}\Big)=0.
\end{equation}
To start with, following the ideas contained in \cite{FLP1}, we write
\begin{equation}\label{adois0}
\begin{split}
\langle x \rangle_{N}^{\theta}D^{1+a}_x\partial_{x}u=\D_x (\la^{\theta} D_x \partial_{x}u)-[\D_x;\la^{\theta}]D_x\partial_{x}u
=: A_1+A_2.
\end{split}
\end{equation}
From Proposition \ref{ComuDerivada} and the fact that  $\|\partial_x\la^{\theta}\|_{L^\infty_x}\lesssim 1$, we obtain
\begin{equation}\label{adois}
\begin{split}
\|A_2\|=\|[\D_x;\la^{\theta}]D^{1-a}_x D^a_x \partial_{x}u\|
               \lesssim \|\partial_x \la^{\theta}\|_{L^\infty_x}\|\D_x \partial_{x}u\|
               \lesssim \|\D_x \partial_{x}u\|.
\end{split}
\end{equation}
 Inequality \eqref{adois} and the fact that $s\geq1$ yield
\begin{equation*}
\|A_2\|\lesssim \|J_x^{a+1}u\|\lesssim \|J_x^{(a+1)s}u\|\lesssim M,
\end{equation*}
where, as before, $M$ is given in \eqref{Mdef}.
For  $A_1$,  we write
\begin{equation*}
\begin{split}
A_1=\D_x(\la^{\theta} D_x\partial_{x}u)=\D_x \partial_{x}(\la^{\theta} D_x u)-\D_x ((\partial_{x} \la^{\theta})D_x u)
=:\ B_1+B_2.
\end{split}
\end{equation*}
Another application of Proposition \ref{ComuDerivada} together with the fact that $|\partial_x^\alpha\la^\theta|\lesssim 1$, $\alpha=1,2$, yield
\begin{equation*}
\begin{split}
\|B_2\|_{L^2_x}%&=\|\D_x((\partial_{x} \la)D_x u)\|_{L^2_x}\\
&\leq \|[\D_x;\partial_{x}\la^\theta]D_x u\|+\|\partial_{x}\la^\theta D^{1+a}_x u\|\\
&=\|[\D_x;\partial_{x}\la^\theta]D^{1-a}_xD^{a}_x u\|+\|\partial_{x}\la^\theta D^{1+a}_x u\|\\
&\lesssim \|\partial_{x}^{2}\la\|_{L^\infty_x} \|\D_x u\|+\|D^{1+a}_x u\|\\
&\lesssim \|J^{1+a}_{x}u\|\lesssim M.
\end{split}
\end{equation*}
Observe that $B_1$ reads as 
\begin{equation*}
\begin{split}
B_1=\D_x \partial_{x}(\la^{\theta} D_x u)=\D_x \partial_{x}D_x(\la^{\theta} u)-\D_x \partial_{x}[D_x;\la^{\theta}]u
                          =:  C_1+C_2.
\end{split}
\end{equation*}
Inserting $C_1$ in \eqref{106x}, from the antisymmetry of operator $D^{1+a}_x\p_x$, we see that its contribution is null. On the other hand,
using that $D_x=\h \partial_x$, we get
\begin{equation*}
\begin{split}
[D_x;\la^{\theta}]u&=D_x(\la^{\theta} u)-\la^{\theta} D_x u\\
       &=\h\partial_{x}(\la^{\theta} u)-\la^{\theta} \h \partial_{x}u\\
       &=\h((\partial_{x}\la^{\theta})u)+[\h;\la^{\theta}]\partial_x u.
\end{split}
\end{equation*}
Therefore,
$$C_2=-\D_x \partial_x \h((\partial_x \la^{\theta})u)-\D_x \partial_x [\h;\la^{\theta}]\partial_x u=:D_1+D_2.$$
From  the interpolation inequality
$\|\D_x u\|_{L^2(\R)}\lesssim \|u\|_{L^2(\R)}^{1-a}\|D_x u\|_{L^2(\R)}^{a}$, Young's inequality, and Theorem \ref{Comu},
we infer
\begin{equation*}
\begin{split}
\|D_2\|&=\|\D_x  \partial_{x}[\h;\la^{\theta}]\partial_x u\|\\
       &\lesssim\|\partial_x [\h;\la^{\theta}]\partial_x u\|^{1-a}\|D_x \partial_x [\h;\la^{\theta}]\partial_x u\|^{a}\\
       &\lesssim\|\partial_x [\h;\la^{\theta}]\partial_x u\|^{1-a}\| \partial_{x}^2 [\h;\la^{\theta}]\partial_x u\|^{a}\\
       &\lesssim\left(\|\partial_{x}^3 \la^{\theta}\|_{\infty}+\|\partial_{x}^2 \la^{\theta}\|_{\infty}\right)\|u\|\\
       &\lesssim \|u\|\lesssim M,
\end{split}
\end{equation*}
where we used that $|\partial_x^\alpha\la^\theta|\lesssim 1$, $\alpha=2,3$. Similarly,
\begin{equation*}
\begin{split}
\|D_1\|&=\|\h \D_x \partial_{x}((\partial_x \la^{\theta}) u)\|\\
       &\lesssim\|\D_x((\partial_{x}^2 \la^{\theta})u)\|+\|\D_x((\partial_x \la^{\theta})\partial_x u)\|\\
       &\lesssim  \|\partial_{x}^2 \la^{\theta} u\|+\|D_x(\partial_{x}^2 \la^{\theta} u)\|+\|[\D_x;\partial_x \la^{\theta}]\partial_x u + \partial_{x}\la^{\theta} \D_x \partial_{x}u\|\\
      & \lesssim  \|\partial_{x}^2 \la^{\theta}\|_{\infty}\|u\|+\|\partial_{x}^3 \la^{\theta}\|_{\infty}\|u\|+\|\partial_{x}^2 \la^{\theta}\|_\infty \|\partial_x u\| +\\
      & \quad + \|[\D_x;\partial_x \la^{\theta}]D^{1-a}_x D^{a}_x u\|+\|\partial_x \la^{\theta}\|_\infty \|\D_x \partial_x u\|\\
      & \lesssim  \left(\|\partial_{x}^2 \la^{\theta}\|_{\infty}+\|\partial_{x}^3 \la^{\theta}\|_{\infty} \right)\|J_x u\|+\| \partial_{x}^2 \la^{\theta}\|_{L^\infty_{x}} \|J_x^{1+a} u\|\\
      & \lesssim  \|J_x^{1+a} u\|\lesssim M.
\end{split}
\end{equation*}
From the above inequalities and \eqref{adois0}, we conclude
\begin{equation}\label{adois2}
\|\langle x \rangle_{N}^{\theta}D^{1+a}_x\partial_{x}u\|\lesssim M.
\end{equation}

Next, using integration by parts and 
$|\partial_{x}\langle x \rangle_{N}^{2\theta}|\les\langle x \rangle_{N}^{2\theta-1}$,
 we obtain
\begin{equation*}
\begin{split}
\int \langle x \rangle_{N}^{2\theta}u\partial_{x}\partial_{y}^{2}u=\ \frac{1}{2}\int
\partial_{x}\langle x \rangle_{N}^{2\theta}(\partial_{y}u)^2
\lesssim
\|\langle x \rangle_{N}^{-1/2+\theta}\partial_{y}u\|^2.
\end{split}
\end{equation*}
If $\theta=r_1\in(0,1/2]$, we promptly see that
\[
\int \langle x \rangle_{N}^{2\theta}u\partial_{x}\partial_{y}^{2}u\lesssim
\|\partial_{y}u\|^2\lesssim\|u\|_{H^1}^2\lesssim M^2.
\]
Also, if $\theta=r_1\in(1/2,1]$, Lemma \ref{interx} and Young's inequality imply
\begin{equation*}
\begin{split}
\int \langle x \rangle_{N}^{2\theta}u\partial_{x}\partial_{y}^{2}u
&\lesssim 
\|\langle x \rangle_{N}^{-1/2+\theta}\partial_{y}u\|^2
\lesssim \|J_y(\langle x \rangle_{N}^{-1/2+\theta}u)\|^2\\
&\lesssim \|\langle x \rangle_{N}^{\theta}u\|+\|J_{y}^{2\theta}u\|
\lesssim \|\langle x \rangle_{N}^{\theta}u\|^2+M^2,
\end{split}
\end{equation*}
where we used that $E^s\hookrightarrow H^{2r_1}_y$. In both cases we get
\begin{equation}\label{adois3}
\int \langle x \rangle_{N}^{2\theta}u\partial_{x}\partial_{y}^{2}u \lesssim \|\langle x \rangle_{N}^{\theta}u\|^2+M^2.
\end{equation}

Finally, since $E^s\hookrightarrow H^{(1+a)s}\hookrightarrow L^\infty$ and $|\partial_{x}\langle x \rangle_{N}^{2\theta}|\lesssim \langle x \rangle_{N}^{2\theta-1}\lesssim \langle x \rangle_{N}^{\theta}$, we deduce
\begin{equation}\label{teoZd12}
\begin{split}
\Big|\left(\langle x \rangle_{N}^{\theta}u,\langle x \rangle_{N}^{\theta}uu_{x}\right)\Big|=\frac{1}{3}\left|\int \partial_x\langle x \rangle_{N}^{2\theta}u^3\right|\lesssim \|\langle x \rangle_{N}^{\theta}u\|\|u\|\|u\|_\infty\lesssim M^4+\|\langle x \rangle_{N}^{\theta}u\|^2
\end{split}
\end{equation}

Combining estimates \eqref{adois2}, \eqref{adois3}, and \eqref{teoZd12} with \eqref{106x}, we deduce
\[
\frac{1}{2}\frac{d}{dt}\|\langle x \rangle_{N}^{\theta}u\|^{2}\leq c(1+\|\langle x \rangle_{N}^{\theta}u\|^2).
\]
By using Gronwall's lemma and arguing as before, we finally obtain
\begin{equation*}
\|\langle x \rangle^{r_1}u\|^{2}\leq e^{2ct}\|\langle x
\rangle^{r_1}\phi\|^{2}+e^{2ct}-1.
\end{equation*}
 This proves Case b) and completes the proof of Part 1).\\

\noindent {\bf Part 2):} The persistence in $L^2_{0,r_2}$ follows exactly as in Part 1). So we need only to prove the persistence in $L^2_{r_1,0}$. Here, instead of using the differential equation itself we will use the  equivalent integral formulation 
\begin{equation}\label{inteq}
u(t)=U(t)\phi -\frac12\int_{0}^{t}U(t-\tau)\partial_{x}u^2(\tau)d\tau,
\end{equation}
where $U(t)\phi$  is  the solution of the IVP associated with the linear
gBO-ZK equation. This is necessary because we are not able to reiterate the process in Part 1). At this point our analysis diverges from that in \cite{FLP1}.

 We will divide into two other cases.

\noindent  \textbf{Case a).} $r_1\in(1,2)$. Let us start by writing $r_1=1+\theta$, $\theta \in (0,1)$. Since
 \begin{equation}\label{r1int}
 \||x|^{r_1} u(t)\|\leq \||x|^{r_1} U(t)\phi\|+\int_0^t \||x|^{r_1} U(t-\tau)z(\tau)\|d\tau, \quad z=\frac12\p_x u^2,
 \end{equation}
 we need to estimate each term on the right-hand side.
 Using \eqref{F1}
 \begin{equation*}
\begin{split}
\||x|^{1+\theta}U(t)\phi\|&\lesssim \|\mathcal{D}_\xi^{1+\theta}\widehat{U(t)\phi}\|=\|\mathcal{D}_\xi^{\theta}\partial_{\xi}(\psi\hat{\phi})\| \\ 
&\lesssim t\Big (\|\mathcal{D}_\xi^\theta(\psi\eta^2 \ha)\|+\|\mathcal{D}_\xi^\theta(\psi|\xi|^{1+a} \ha)\|\Big)+\|\mathcal{D}_\xi^\theta(\psi\p_\xi \ha)\|\\
&=:A_1+A_2+A_3.
\end{split}
\end{equation*}
Now, using Lemma \ref{DF} and Young's inequality,
\begin{equation}\label{aa1}
\begin{split}
A_2 \lesssim t\rho(t)&  \Big(\|D_x^{1+a}\phi\|+\|D^{2\theta}_y D_x^{1+a} \phi\|+\|D_x^{(1+a)\theta}D_x^{1+a} \phi\|\Big)+ \underbrace{\||x|^\theta D_x^{1+a} \phi\|}_{A_{2,1}}\\
\lesssim t\rho(t)& \Big(\|\phi\|_{H_{y}^{2(1+\theta)}}+\|\phi\|_{H_x^{(1+a)(1+\theta)}}\Big)+A_{2,1}.
\end{split}
\end{equation}
Since $E^s\hookrightarrow H_y^{2r_1}$ and $E^s\hookrightarrow H_x^{(1+a)r_1}$ the first two terms in \eqref{aa1} are finite. To estimate $A_{2,1}$ we use function $\varphi$ in \eqref{varphi} to write
\begin{equation*}
\begin{split}
A_{2,1}=\|\mathcal{D}_\xi^\theta(|\xi|^{1+a}\hat{\phi})\| \leq \|\Dt(|\xi|^{1+a}\varphi(\xi)\ha)\|+\|\Dt(|\xi|^{1+a}(1-\varphi(\xi))\ha)\|
=:A_{2,1}^1+A_{2,1}^2.
\end{split}
\end{equation*}
From \eqref{Leib} we deduce 
 \begin{equation}
\begin{split}\label{A21}
A_{2,1}^1 & \lesssim \||\xi|^{1+a} \varphi(\xi)\Dt \ha\|+\|\ha \Dt (|\xi|^{1+a}\varphi(\xi))\|\\
& \lesssim \||\xi|^{1+a}\varphi(\xi)\|_{L^\infty_\xi} \|\Dt \ha\|+\|\ha\|\|\Dt (|\xi|^{1+a}\varphi(\xi))\|_{L^\infty_\xi}\\
&\lesssim \||x|^\theta\phi\|+\|\phi\|,
\end{split}
\end{equation}
where we used Proposition \ref{Dstein} to obtain that $\|\Dt (|\xi|^{1+a}\varphi(\xi))\|_{L^\infty_\xi}$ is finite.
Also, observing that the function $\xi\mapsto \frac{|\xi|^{1+a}(1-\varphi(\xi))}{\langle \xi \rangle^{1+a}}$ satisfies the assumptions in Proposition \ref{Jota}, from \eqref{Jotaf1} we obtain
\begin{equation}
\begin{split}\label{A211}
A_{2,1}^2=\Big\|J_\xi^\theta \Big( \frac{|\xi|^{1+a}(1-\varphi(\xi))}{\langle \xi \rangle^{1+a}}\langle \xi \rangle^{1+a}\ha\Big)\Big\|
\lesssim \|J_\xi^\theta (\lan^{1+a}\ha)\|.
\end{split}
\end{equation}
An application of Lemma \ref{interx} gives
$$
A_{2,1}^2\lesssim \|J_x^{(1+a)(1+\theta)}\phi\|+\|\lanx^{1+\theta}\phi\|
$$
and we deduce that
\begin{equation}\label{A21estimate}
A_{2,1}\lesssim  \|J_x^{(1+a)(1+\theta)}\phi\|+\|\lanx^{1+\theta}\phi\|.
\end{equation}
Let us now estimate $A_3$. By recalling that $\partial_{\xi}\hat{\phi}=-\widehat{ix\phi}$  we use Lemma \ref{DF} to  write
\begin{equation*}
\begin{split}
A_3 =\|\mathcal{D}_\xi^\theta(\psi\, \widehat{ix\phi})\|& \lesssim  \rho(t) \Big(\|x\phi\|+\|D^{2\theta}_y(x\phi)\|+\underbrace{\|D_x^{(1+a)\theta} (x\phi)\|}_{B}\Big)+\||x|^\theta x \phi\|\\
&\lesssim  \rho(t)\Big(\|J_y^{2\theta}(\lanx\phi)\|+B \Big)+\||x|^{\theta+1} \phi\|\\
&\lesssim  \rho(t)\Big(\|J_y^{2(1+\theta)}\phi\|+\|\lanx^{1+\theta}\phi\|+B \Big)+\||x|^{\theta+1}  \phi\|,
\end{split}
\end{equation*}
where we also used Lemma \ref{interx} in the last inequality.
Using Lemma \ref{interx} again, the term $B$ can be estimated as follows:
\begin{equation*}
\begin{split}
B &\leq \|J_x^{(1+a)\theta}(x\phi)\|
\leq \|\lan^{(1+a)\theta}\p_\xi \ha\|\\
&\lesssim \|\lan^{(1+a)\theta-1}\ha\|+\|J_\xi(\lan^{(1+a)\theta}\ha)\|\\
&\lesssim \|J_x^{(1+a)\theta}\phi\|+\|J_\xi^{1+\theta}\ha\|+\|\lan^{(1+\theta)(1+a)}\ha\|\\
&\lesssim \|J_x^{(1+a)(1+\theta)}\phi\|+\|\lanx^{1+\theta}\phi\|.
\end{split}
\end{equation*} 
For $A_1$, using Lemma \ref{DF} and Young's inequality we have
\[
\begin{split}
A_1&=\|\mathcal{D}_\xi^\theta(\psi\, \widehat{\partial_y^2\phi})\|\\
&\lesssim \rho(t)\left(\|\partial_y^2\phi\|+\|D_y^{2\theta}\partial_y^2\phi\|+\|D_x^{(1+a)\theta}\partial_y^2\phi\|\right)+\||x|^\theta\partial_y^2\phi\| \\
& \lesssim \rho(t)\left(\|J_y^{2(1+\theta)}\phi\|+\|J_x^{(1+a)(1+\theta)}\phi\|\right)+\||x|^\theta\partial_y^2\phi\|.
\end{split}\]
The last term in the above inequality can be estimated using Lemma \ref{interx},
$$
\||x|^\theta\partial_y^2\phi\|\leq \|J^2_y(\langle x\rangle^\theta\phi)\|\lesssim \|J_y^{2(1+\theta)}\phi\|+\|\langle x \rangle^{1+\theta}\phi\|,
$$
from which we obtain
$$
A_1\lesssim \rho(t)\left(\|J_y^{2(1+\theta)}\phi\|+\|J_x^{(1+a)(1+\theta)}\phi\|\right)+\|\langle x \rangle^{1+\theta}\phi\|.
$$
Gathering together the above estimates for $A_1$, $A_2$, and $A_3$, we then infer
\begin{equation}\label{r1esti1}
\begin{split}
\||x|^{1+\theta}U(t)\phi\|\lesssim\rho_1(t)\big(\|\phi\|_{H^{2(1+\theta)}_y}+\|\phi\|_{H^{(1+a)(1+\theta)}_x}+\||x|^{1+\theta}\phi\|\big),
\end{split}
\end{equation}
where $\rho_1$ is a continuous increasing function on $t\in [0,T]$.

 Now using \eqref{r1esti1} in \eqref{r1int}, we obtain for all $t\in [0,T]$ 
 \begin{equation}\label{int3}
 \begin{split}
\||x|^{r_1} u(t)\|&\leq \||x|^{r_1} U(t)\phi\|+\int_0^t \||x|^{r_1} U(t-\tau)z(\tau)\|d\tau\\
&\lesssim\rho_1(T)(\|\phi\|_{H^{2r_1}_y}+\|\phi\|_{H^{(1+a)r_1}_x}+\||x|^{r_1}\phi\|)+\\
&\quad+\int_0^t \rho_1(t-\tau)\big(\|\p_x u^2(\tau)\|_{H^{2r_1}_y}+\|\p_x u^2(\tau)\|_{H^{(1+a)r_1}_x}+\||x|^{r_1}\p_x u^2(\tau)\|\big)d\tau.
\end{split}
\end{equation}
Note that
\begin{equation*}
\begin{split}
\|\p_x u^2\|_{H^{2r_1}_y}\lesssim \|J_x^{(1+a)r_1+1}u^2\|+\|J_y^{2r_1+\frac{2}{1+a}}u^2\|
\lesssim \|u\|^2_{H^{(1+a)r_1+1,2r_1+\frac{2}{1+a}}},
\end{split}
\end{equation*}
where we used that $H^{(1+a)r_1+1,2r_1+\frac{2}{1+a}}$ is a Banach algebra.
Our assumption $s\geq r_1+\frac{1}{1+a}$ implies  $E^s\hookrightarrow H^{(1+a)r_1+1,2r_1+\frac{2}{1+a}}$ and we deduce  
$$
\|\p_x u^2\|_{H^{2r_1}_y}\lesssim M^2.
$$
A similar argument also show that
$$
\|\p_x u^2\|_{H^{(1+a)r_1}_x}\lesssim M^2.
$$
and
$$
\||x|^{r_1}\p_x u^2\|\lesssim \|\p_xu\|_{L^\infty}\||x|^{r_1}u\|\lesssim M\||x|^{r_1}u\|.
$$
Consequently, from \eqref{int3} we deduce
\begin{equation}\label{int3.1}
\||x|^{r_1} u(t)\|\leq c+c\int_0^t(1+\||x|^{r_1}u(\tau)\|)d\tau, \qquad t\in[0,T].
\end{equation}
An application of Gronwall's lemma gives $\sup_{t\in[0,T]}\||x|^{r_1} u(t)\|<\infty$.

\noindent {\bf Case b).} $r_1=2$. In view of \eqref{F2},
\[
\begin{split}
\|x^2U(t)\phi\|&= \|\partial_{\xi}^2(\psi\hat{\phi})\|\\
&\lesssim t\Big( \||\xi|^a\hat{\phi}\|+\||\xi|^{2(1+a)}\hat{\phi}\|+\||\xi|^{1+a}\eta^2\hat{\phi}\| +\|\eta^4\hat{\phi}\|\\
&\quad \quad +\|\eta^2\partial_{\xi}\hat{\phi}\| +\||\xi|^{1+a}\partial_{\xi}\hat{\phi}\| \Big) +\|\partial_{\xi}^2\hat{\phi}\|.
\end{split}
\]
From Young's inequality and Lemma \ref{interx} it is not difficulty to obtain
\begin{equation*}
\begin{split}
\|x^2 U(t)\phi\|\lesssim\|\phi\|_{H_x^{2(1+a)}}+\|\phi\|_{H_y^{4}}+\|x^2 \phi\|, \quad t\in[0,T].
\end{split}
\end{equation*}
Using the same argument as in \eqref{int3} and \eqref{int3.1} we also deduce $\sup_{t\in[0,T]}\|x^{2} u(t)\|<\infty$. Part 2) is thus completed.\\
 
\noindent {\bf Part 3).} As we already said it suffices to show the persistence in $L^2_{r_1,0}$. So assume $r_1\in (2,5/2+a)$. Next we divide the proof into the cases  $0<a\leq 1/2$ and $1/2<a<1$.\\

\noindent {\bf Case a).} $0<a\leq 1/2$. Write  $r_1=2+\theta$, where $0<\theta<1/2+a$. In this case it is clear that $\theta\in(0,1)$.
Using \eqref{F2}, 
\begin{equation}
\begin{split}\label{D2}
\||x|^{2+\theta}U(t)\phi\|\lesssim_a& \ t\Big (\|\dte(\psi\sgn(\xi)|\xi|^a \ha)\|+\|\dte(\psi|\xi|^{2(1+a)} \ha)\|+\|\dte(\psi|\xi|^{1+a}\eta^2 \ha)\|+\\
&+\|\dte(\psi \eta^4 \ha)+\|\dte(\psi|\xi|^{1+a}\p_\xi \ha)\|+\|\dte(\psi \eta^2 \p_\xi \ha)\|\Big)+\|\dte(\psi \p_\xi^2 \ha)\|\\
&=:B_1+\ldots+B_7.
\end{split}
\end{equation}
Let us estimate each one of the terms $B_j$, $j=1,\ldots,7$. By Lemma \ref{DF} and Young's inequality
\begin{equation*}
\begin{split}
B_1 &\lesssim t\rho(t) \Big(\|D_x^{a} \h \phi\|+\|D_y^{2\theta}D_x^a \h \phi\|+\|D_x^{(1+a)\theta}D^a_x \h \phi\|\Big)+\||x|^{\theta}D^a \h \phi\|\\
&\lesssim t\rho(t)\Big(\|\phi\|_{H^{2(1+\theta)}_y}+\| \phi\|_{H^{(1+a)(1+\theta)}_x}\Big)+\underbrace{\||x|^{\theta}D^a_x \h \phi\|}_{K},
\end{split}
\end{equation*}
To estimate $K$, we make use of function $\chi$ in \eqref{chi} to  write
\begin{equation*}
\begin{split}
K=\|D_\xi^\theta (|\xi|^a \sgn(\xi)\ha)\|
\leq \|D^{\theta}_\xi (|\xi|^{a}\sgn(\xi)\chi \ha)\|+\|D^{\theta}_\xi (\underbrace{|\xi|^{a}\sgn(\xi)(1-\chi)\ha}_{L})\|=: K_{1}+K_{2}.
\end{split}
\end{equation*}
Thus, in view of \eqref{Leibh},
\begin{equation}\label{K1term}
\begin{split}
K_{1}& \lesssim \|\Dt (|\xi|^a \sgn(\xi)\chi \ha)\|\\
&\lesssim\||\xi|^a \sgn(\xi)\chi\|_\infty \|\Dt \ha\|+\|\ha\|_\infty \|\Dt (|\xi|^a \sgn(\xi)\chi)\|.
\end{split}
\end{equation}
Since $\chi(\xi,\eta)=\varphi(\xi)\varphi(\eta)$ the term $\||\xi|^a \sgn(\xi)\chi\|_\infty$ is clearly finite. Also, an application of Proposition \ref{Dstein} gives that $\|\Dt (|\xi|^a \sgn(\xi)\chi)\|$ is finite. It is to be clear that at this point the assumption $\theta<1/2+a$ is crucial. From Sobolev's embedding we then obtain
\begin{equation*}
K_1\lesssim \||x|^\theta \phi\|+\|\lanx^{1+\theta} \phi\|+\|\lany^{1+\theta} \phi\|.
\end{equation*}
For $K_2$ we use the inequality $\|D^\theta_\xi L\|\leq \|L\|^{1-\theta}\|\partial_{\xi} L\|^\theta\lesssim \|L\|+\|\partial_{\xi} L\|$. The term $\|L\|$ is clearly finite. In addition, since $1-\chi$ vanishes around the origin,
\begin{equation*}
\begin{split}
\|\partial_\xi L \| &= \Big \|a\frac{1-\chi}{|\xi|^{1-a}}\ha-|\xi|^{a}\sgn(\xi)\partial_\xi \chi \ha+|\xi|^{a}\sgn(\xi)(1-\chi)\partial_\xi \ha\Big\|\\
&\lesssim \Big\|\mathcal{X}_{\{|\xi|,|\eta|\geq 1\}}\frac{1-\chi}{|\xi|^{1-a}}\ha\Big\|+\||\xi|^a\ha\|+\||\xi|^a \p_\xi \ha\|\\
&\lesssim  \|\ha\|+\||\xi|^a\ha\|+\||\xi|^a \p_\xi \ha\|,
\end{split}
\end{equation*}
where $\mathcal{X}_\Omega$ stands for the characteristic function of the set $\Omega$.
The first two terms in the above inequality are clearly finite. The last one may be estimated as follows: 
\begin{equation*}
\begin{split}
\||\xi|^a \p_\xi \ha\|\lesssim \|\partial_{\xi}\langle\xi\rangle^a\ha\|+\|J_\xi(\langle \xi \rangle^{a}\ha)\|\leq \|\phi\|+\|\langle \xi \rangle^{2a}\ha\|+\|J_\xi^2 \ha\|,
\end{split}
\end{equation*}
where we used that $\partial_{\xi}\langle\xi\rangle^a$ is bounded and Lemma \ref{interx}. Hence, we obtain
$$
K_2\lesssim \|\phi\|_{H^{2a}_x}+\|\lanx^2\phi\|,
$$ 
and consequently,
\begin{equation}\label{B1est}
B_1\lesssim t\rho(t)\Big(\|\phi\|_{H^{2(1+\theta)}_y}+\| \phi\|_{H^{(1+a)(1+\theta)}_x}\Big) +\|\phi\|_{H^{2a}_x}+\|\lanx^2\phi\| +\|\lany^{1+\theta} \phi\|.
\end{equation}
For $B_2$ we use Lemma \ref{DF} to get
\begin{equation*}
\begin{split}
B_2 &\lesssim  t\rho(t) (\|D_x^{2(1+a)}\phi\|+\|D_y^{2\theta} D_x^{2(1+a)}\phi\|+\|D_x^{(1+a)\theta}D_x^{2(1+a)}\phi\|)+
\||x|^\theta D_x^{2(1+a)}\phi\|.
\end{split}
\end{equation*}
The first three terms on the right-hand side of the above inequality can be estimated by using the Young inequality. The last one may be estimated as the term $A_{2,1}$ in \eqref{aa1}. Thus, we obtain
\begin{equation}
B_2\lesssim t\rho(t)\Big(\|\phi\|_{H^{2(2+\theta)}_y}+\| \phi\|_{H^{(1+a)(2+\theta)}_x}\Big)+\|\langle x\rangle^{2+\theta}\phi\|.
\end{equation}

Terms $B_3$ and $B_4$ are estimated similarly. Indeed, Lemma \ref{DF}, Young's inequality, \eqref{A21estimate}, and Lemma \ref{interx} yield
\begin{equation*}
\begin{split}
B_3&\lesssim  t\rho(t) \Big(\|D^{1+a}_x \p_y^2 \phi\|+\|D_y^{2\theta} D^{1+a}_x\p_y^2 \phi\|+\|D^{(1+a)\theta}_xD^{1+a}_x \p_y^2 \phi\|\Big)+\||x|^\theta D^{1+a}_x \p_y^2\phi\|\\
&\lesssim t\rho(t)\Big(\|\phi\|_{H^{2(2+\theta)}_y}+\| \phi\|_{H^{(1+a)(2+\theta)}_x}\Big)+ \|J_x^{(1+a)(1+\theta)}\partial_y^2\phi\|+\|\lanx^{1+\theta}\partial_y^2\phi\|\\
&\lesssim t\rho(t)\Big(\|\phi\|_{H^{2(2+\theta)}_y}+\| \phi\|_{H^{(1+a)(2+\theta)}_x}\Big)+
\|J_x^{(1+a)(2+\theta)}\phi\|+\|J_y^{2(2+\theta)}\phi\|+\|\lanx^{2+\theta}\phi\|,
\end{split}
\end{equation*}
and
\begin{equation*}
\begin{split}
B_4 &\lesssim  t\rho(t)\Big(\|\p_y^4 \phi\|+\|D_y^{2\theta} \p_y^4  \phi\|+\|D^{(1+a)\theta}\p_y^4 \phi\|\Big)+\||x|^\theta \p_y^4 \phi\|\\
& \lesssim t\rho(t)\Big(\|\phi\|_{H^{2(2+\theta)}_y}+\| \phi\|_{H^{(1+a)(2+\theta)}_x}\Big)+\|J_y^{2(2+\theta)}\phi\|+\|\lanx^{2+\theta}\phi\|.
\end{split}
\end{equation*}

Next, from Lemma \ref{DF} we get
\begin{equation*}
\begin{split}
B_5 &\lesssim  t\rho(t) \Big(\|D_x^{1+a}(x\phi)\|+\|D_y^{2\theta} D_x^{1+a}(x\phi)\|+\|D_x^{(1+a)(1+\theta)}(x\phi)\|\Big)+\||x|^\theta D_x^{1+a}(x\phi)\|\\
&\lesssim t\rho(t)\Big(\|J_y^{2(1+\theta)}(x\phi)\|+\|J_x^{(1+a)(1+\theta)}(x\phi)\|\Big)+\||x|^\theta D_x^{1+a}(x\phi)\|\\
&\lesssim t\rho(t) \Big(B_{5,1}+B_{5,2}\Big)+B_{5,3}
\end{split}
\end{equation*}
where we used Young's inequality to obtain
\begin{equation*}
\|D_y^{2\theta} D_x^{1+a}(x\phi)\|\lesssim \|J_y^{2(1+\theta)}(x\phi)\|+\|J_x^{(1+\theta)(1+a)}(x\phi)\|.
\end{equation*}
But, from Lemma \ref{interx},
\begin{equation*}
\begin{split}
B_{5,1}\lesssim \|J_y^{2(\theta+1)}(\lanx \phi)\|
\lesssim \|J_y^{2(2+\theta)}\phi\|+\|\lanx^{2+\theta}\phi\|.
\end{split}
\end{equation*}
Also, Plancherel's identity and Lemma \ref{interx} give
\begin{equation*}
\begin{split}
B_{5,2}&=\|\lan^{(1+a)(1+\theta)}\p_\xi \ha\|\\
&\lesssim \|\lan^{(1+a)(1+\theta)-1}\ha\|+\|J_\xi (\lan^{(1+a)(1+\theta)})\ha\|\\
&\lesssim \|J_x^{(1+a)(1+\theta)}\phi\|+\|J_\xi^{2+\theta}\ha\|+\|\lan^{(1+a)(2+\theta)}\ha\|\\
&\lesssim \|\lanx^{2+\theta}\phi\|+\|J_x^{(1+a)(2+\theta)}\phi\|.
\end{split}
\end{equation*}
Note that $B_{5,3}$ is exactly term $A_{2,1}$ in \eqref{aa1} with $x\phi$ instead of $\phi$. Thus, from \eqref{A21estimate}, we have
\begin{equation*}
\begin{split}
B_{5,3}\lesssim \|J_x^{(1+a)(1+\theta)}(x\phi)\|+\|\lanx^{1+\theta}x\phi\|
\lesssim \|\lanx^{2+\theta}\phi\|+B_{5,2}\lesssim \|\lanx^{2+\theta}\phi\|+\|J_x^{(1+a)(2+\theta)}\phi\|,
\end{split}
\end{equation*}
and conclude that
\begin{equation}\label{B5estimate}
B_5\lesssim t\rho(t)\Big(\|\phi\|_{H^{2(2+\theta)}_y}+\| \phi\|_{H^{(1+a)(2+\theta)}_x}+\|\lanx^{2+\theta}\phi\|\Big)+\|J_x^{(1+a)(2+\theta)}\phi\|+\|\lanx^{2+\theta}\phi\|.
\end{equation}
For $B_6$, Lemma \ref{DF} implies
\begin{equation}\label{B6}
\begin{split}
B_6 \lesssim  t\rho(t) \Big(\|\p_y^2(x\phi)\|+\|D_y^{2\theta} \p_y^2(x\phi)\|+\|D^{(1+a)\theta}_x\p_y^2(x\phi)\|\Big)+\||x|^\theta \p_y^2(x\phi)\|.
\end{split}
\end{equation}
From Lemma \ref{interx}, the first two terms on the right-hand side of \eqref{B6} may be estimated as
$$
\|\p_y^2(x\phi)\|+\|D_y^{2\theta} \p_y^2(x\phi)\|\lesssim \|J_y^{2(2+\theta)}\phi\|+\|\lanx^{2+\theta}\phi\|.
$$
For the third one we use Young's inequality to obtain
$$
\|D^{(1+a)\theta}_x\p_y^2(x\phi)\|\lesssim \|J_y^{2(1+\theta)}(x\phi)\|+\|J_x^{(1+a)(1+\theta)}(x\phi)\|\lesssim \|\lanx^{2+\theta}\phi\|+\|J_x^{(1+a)(2+\theta)}\phi\|+\|J_y^{2(2+\theta)}\phi\|,
$$
where we used the estimates for $B_{5,1}$ and $B_{5,2}$ above. Finally, using similar arguments,
\begin{equation*}
\begin{split}
B_7 &\lesssim  t\rho(t) \Big(\|x^2\phi\|+\|D_y^{2\theta} (x^2\phi)\|+\|D^{(1+a)\theta}_x(x^2\phi)\|\Big)+\||x|^\theta x^2\phi\|\\
&\lesssim t\rho(t) \Big(\|\lanx^{2+\theta}\phi\|+\|J_y^{2\theta} (\lanx^2 \phi)\|+\|J_x^{(1+a)(1+\theta)}(x\phi)\|+\|\lanx^{1+\theta}x\phi\|\Big)+\||x|^\theta x^2\phi\|\\
&\lesssim t\rho(t) \Big(\|\lanx^{2+\theta}\phi\|+\|J_y^{2(2+\theta)}\phi\|+B_{5,2}\Big).
\end{split}
\end{equation*}

Gathering together all the above inequalities, we deduce
\begin{equation}
\begin{split}\label{psi2}
\||x|^{2+\theta}U(t)\phi\|\lesssim\rho_2(t)\big(\|\phi\|_{H_x^{(1+a)(2+\theta)}}+\|\phi\|_{H_y^{2(2+\theta)}}+\||x|^{2+\theta}\phi\|+\||y|^{1+\theta}\phi\|\big),
\end{split}
\end{equation}
where $\rho_2$ is a continuous increasing function on $t\in [0,T]$.

Recalling that $r_1=2+\theta$, as in \eqref{r1int}, we then get
\begin{equation}\label{int33}
 \begin{split}
\||x|^{r_1} u(t)\|&\leq \||x|^{r_1} U(t)\phi\|+\int_0^t \||x|^{r_1} U(t-\tau)z(\tau)\|d\tau\\
&\lesssim\rho_2(T)(\|\phi\|_{H^{2r_1}_y}+\|\phi\|_{H^{(1+a)r_1}_x}+\||x|^{r_1}\phi\|)+\||y|^{r_1-1}\phi\|\\
& +\int_0^t \rho_2(t-\tau)(\|\p_x u^2(\tau)\|_{H^{2r_1}_y}+\|\p_x u^2(\tau)\|_{H^{(1+a)r_1}_x}+\||x|^{r_1}\p_x u^2(\tau)\|+\||y|^{r_1-1}\p_x u^2(\tau)\|)d\tau.
\end{split}
\end{equation}
Note that 
$$
\||y|^{r_1-1}\p_x u^2(\tau)\|\lesssim \|\partial_xu\|_{L^\infty}\||y|^{r_1-1} u(\tau)\|\lesssim M\sup_{t\in[0,T]}\||y|^{r_1-1} u(t)\|.
$$
The right-hand side of the above inequality is finite thanks to Case a) in Part 1). Thus, we have
\begin{equation*}
\begin{split}
\||x|^{r_1} u(t)\|\lesssim c
 +\int_0^t \rho_2(t-\tau)(\|\p_x u^2(\tau)\|_{H^{2r_1}_y}+\|\p_x u^2(\tau)\|_{H^{(1+a)r_1}_x}+\||x|^{r_1}\p_x u^2(\tau)\|)d\tau.
\end{split}
\end{equation*}
This last inequality is similar to that in \eqref{int3}. Consequently one can proceed as in Part 2) to get the desired. \\

\noindent {\bf Case b):}   $1/2<a<1$.  If $2<r_1<3$, by writing $r_1=2+\theta$,  we can use the same ideas as in Case a) to obtain the persistence. Note that in this case we also have $\theta<1<1/2+a$ and so we can still apply Proposition \ref{Dstein} to deduce that the term $\|\Dt (|\xi|^a \sgn(\xi)\chi)\|$ appearing in $K_1$ (see \eqref{K1term}) is finite. 

If  $r_1=3$, from \eqref{F3},
\begin{equation*}
\begin{split}\label{r1D3}
\|x^{3}U(t)\phi\| =\|\partial_{\xi}^3(\psi\hat{\phi})\|\lesssim_t \sum_{j=1}^{14}\|G_j\|
\end{split}
\end{equation*}
where the implicit constant depends continuously on $t\in[0,T]$. After several applications of Young's inequality and Lemma \ref{interx} it is not difficult to see that
$$
\sum_{j=1,j\neq6}^{14}\|G_j\|\lesssim \|J_y^{6}\phi\|+\|J_x^{3(1+a)}\phi\|+\|\lanx^{3}\phi\|.
$$
Moreover, if $\chi=\varphi(\xi)\varphi(\eta)$ denotes the function in \eqref{chi},
\begin{equation*}
\begin{split}
\|G_6\|&\lesssim \||\xi|^{a-1}\chi\hat{\phi}\|+\||\xi|^{a-1}(1-\chi)\hat{\phi}\|
\lesssim  \||\xi|^{a-1}\chi\|\|\hat{\phi}\|_{L^\infty_{\eta \xi}}+\||\xi|^{a-1}(1-\chi)\|_{L^\infty_{\eta \xi}}\|\hat{\phi}\|.
\end{split}
\end{equation*}
Since $\chi\equiv1$  near the origin, $\||\xi|^{a-1}(1-\chi)\|_{L^\infty_{\eta \xi}}$ is finite. Also, since $1/2<a<1$ the function $|\xi|^{a-1}\varphi(\xi)$ belongs to $L^2(\R)$, from which we deduce that $ \||\xi|^{a-1}\chi\|$ is finite. Consequently, from Sobolev's embedding,
\begin{equation}\label{G6est}
\|G_6\|\lesssim \|\hat{\phi}\|_{H^{1+}_{\xi \eta}}+\|\hat{\phi}\|\lesssim \|\lanx^{3}\phi\|+\|\lany^{r_2}\phi\|.
\end{equation}
From these estimates we obtain
$$
\|x^{3}U(t)\phi\|\lesssim_T \|J_y^{6}\phi\|+\|J_x^{3(1+a)}\phi\|+\|\lanx^{3}\phi\|+\|\lany^{r_2}\|,
$$
and we can proceed as before.

It remains to consider the case $3<r_1<5/2+a$. First we write $r_1=3+\theta$ with $1/2+\theta<a$. By using \eqref{F3} now we may write
\begin{equation}\label{D3s}
\begin{split}
	\||x|^{3+\theta}U(t)\phi\|\lesssim_{t} & \ \|\dte(\psi\sgn(\xi)\eta^2|\xi|^a \ha)\|+\|\dte(\psi\eta^2|\xi|^{2(1+a)} \ha)\|+\|\dte(\psi|\xi|^{3(1+a)} \ha)\|+\\
	&+\|\dte(\psi |\xi|^{1+a}\eta^4 \ha)+\|\dte(\psi\eta^6 \ha)\|+\|\dte(\psi |\xi|^{a-1} \ha)\|\\
	&+\|\dte(\psi \si|\xi|^{1+2a} \ha)\|+\|\dte(\psi\sgn(\xi)|\xi|^a \p_\xi\ha)\|+\|\dte(\psi |\xi|^{2(1+a)} \p_\xi\ha)\|+\\
	&+\|\dte(\psi \eta^2 |\xi|^{1+a} \p_\xi\ha)\|+\|\dte(\psi \eta^4 \p_\xi\ha)\|+\\
	&+\|\dte(\psi \eta^2 \p_\xi^2\ha)\|+\|\dte(\psi |\xi|^{1+a}\p_\xi^2\ha)\|+\|\dte(\psi \p_\xi^3\ha)\|\\
	=:&\,\,C_1+\cdots+C_{14},
\end{split}
\end{equation}
where the implicit constant depends continuously on $t\in[0,T]$. Using Young's inequality and Lemmas \ref{DF} and \ref{interx}, it is not difficult to deduce that
\begin{equation}\label{allCs}
\begin{split}
C_j \lesssim_{a,\theta,T} \|J_y^{2(3+\theta)}\phi\|+\|J_x^{(1+a)(3+\theta)}\phi\|+\|\lanx^{3+\theta}\phi\|, \ j=1,...,14 \ \mbox{and} \ j\not=6,7.
\end{split}
\end{equation}
What is left is to estimate $C_6$ and $C_7$. Let us start with $C_6$. Lemma \ref{DF} implies that
\begin{equation}
\begin{split}\label{c6}
C_6\lesssim \rho(t)\big(\|D^{a-1}_x\phi\|+\|D_y^{2\theta}D^{a-1}_x\phi\|+\|D^{(1+a)\theta}_xD^{a-1}_x\phi\|\big)+\underbrace{\||x|^\theta D^{a-1}_x\phi\|}_{E}.
\end{split}
\end{equation}
The term $\|D^{a-1}_x\phi\|$ may be estimated as in \eqref{G6est}. Now, with $\chi$ as in \eqref{chi},
$$
\|D_y^{2\theta}D^{a-1}_x\phi\|\lesssim \|\chi|\eta|^{2\theta}|\xi|^{a-1}\hat{\phi}\|+ \|(1-\chi)|\eta|^{2\theta}|\xi|^{a-1}\hat{\phi}\|=:L_1+L_2.
$$
But 
\begin{equation*}
L_1\leq \|\chi|\eta|^{2\theta}|\xi|^{a-1}\|\|\hat{\phi}\|_{L^\infty_{\eta \xi}}=\|\varphi(\eta)|\eta|^{2\theta}\|_{L^2_\eta}\|\varphi(\xi)|\xi|^{a-1}\|_{L^2_{\xi}}\|\hat{\phi}\|_{L^\infty_{\eta \xi}}\lesssim \|\hat{\phi}\|_{L^\infty_{\eta \xi}},
\end{equation*}
where we used that $|\xi|^{a-1}\varphi(\xi)$ belongs to $L^2(\R)$. Also, from Young's inequality,
$$
L_2\leq \left\|\frac{1-\chi}{\xi} \right\|_{L^\infty_{\eta \xi}}\||\eta|^{2\theta}|\xi|^{a}\hat{\phi}\|\lesssim  \||\xi|^{\frac{a(3+\theta)}{3}}\hat{\phi}\|+\||\eta|^{2(3+\theta)}\hat{\phi}\|\lesssim \|J_x^{(1+a)(3+\theta)}\phi\|+\|J_y^{2(3+\theta)}\phi\|.
$$
Thus, from Sobolev's embedding,
\begin{equation*}
\|D_y^{2\theta}D^{a-1}_x\phi\|\lesssim\|J_x^{(1+a)(3+\theta)}\phi\|+\|J_y^{2(3+\theta)}\phi\|+\|\lanx^{r_1}\phi\|+\|\lany^{r_2}\phi\|.
\end{equation*}
Clearly we have $\|D^{(1+a)\theta}_xD^{a-1}_x\phi\|\leq \|J_x^{(1+a)(3+\theta)}\phi\|$. For $E$ in \eqref{c6}, we write
\begin{equation}
\begin{split}\label{c6M}
E=\|D^{\theta}_\xi (|\xi|^{a-1}\hat{\phi})\|
\leq \|D^{\theta}_\xi (|\xi|^{a-1}\chi\hat{\phi})\|+\|D^{\theta}_\xi (|\xi|^{a-1} (1-\chi)\hat{\phi})\|
=: E_1+E_2,
\end{split}
\end{equation}
and split
\begin{equation*}
\begin{split}
E_1 \leq \|D^{\theta}_\xi (|\xi|^{a-1}\chi (\hat{\phi}(\xi,\eta)-\ha(0,\eta))\|+\|D^{\theta}_\xi (|\xi|^{a-1}\chi\hat{\phi}(0,\eta))\|
=: E_{1,1}+E_{1,2}.
\end{split}
\end{equation*}
By using the inequality $\|D_\xi^\theta f\|\lesssim \|f\|+\|\partial_{\xi}f\|$ and the mean value theorem, we deduce
\begin{equation*}
\begin{split}
E_{1,1} & \lesssim  \Big\||\xi|^{a-1}\chi (\hat{\phi}(\xi,\eta)-\ha(0,\eta))\Big\|+\Big\|\p_\xi \big(|\xi|^{a-1}\chi (\hat{\phi}(\xi,\eta)-\ha(0,\eta)\big)\Big\|\\
&\lesssim\Big\||\xi|^a \chi \frac{\hat{\phi}(\xi,\eta)-\hat{\phi}(0,\eta)}{|\xi|}\Big\|+\Big\||\xi|^{a-1} \chi \frac{\hat{\phi}(\xi,\eta)-\hat{\phi}(0,\eta)}{\xi}\Big\|+\\
&\quad+ \Big\||\xi|^a \p_\xi\chi\frac{\hat{\phi}(\xi,\eta)-\hat{\phi}(0,\eta)}{\xi}\Big\|+\Big\||\xi|^{a-1}\chi \p_\xi \ha\Big\|\\
&\lesssim \||\xi|^{a} \chi\|\|\p_\xi \ha\|_{L^\infty_{\xi \eta}}+\||\xi|^{a-1}\chi\|\|\p_\xi \ha\|_{L^\infty_{\xi \eta}}+\||\xi|^a \p_\xi \chi\|\|\p_\xi \ha\|_{L^\infty_{\xi \eta}}+\||\xi|^{a-1} \chi\|\|\p_\xi \ha\|_{L^\infty_{\xi\eta}}\\
& \lesssim \|\lanx^{r_1}\phi\|+\|\lany^{r_2}\phi\|.
\end{split}
\end{equation*}
Also,
\begin{equation*}
\begin{split}\label{Pxi}
 E_{1,2} \leq  \|\ha\|_{L^\infty_{\xi \eta}}\|D_\xi^\theta (|\xi|^{a-1}\chi)\|
\les  (\|\lanx^{r_1}\phi\|+\|\lany^{r_2}\phi\|)\|D_\xi^\theta (|\xi|^{a-1}\chi)\|
\les\|\lanx^{r_1}\phi\|+\|\lany^{r_2}\phi\|,
\end{split}
\end{equation*}
where we used Sobolev's embedding and  Proposition \ref{DsteinL3}, with $\gamma=a-1/2$ and $\epsilon=a-1/2-\theta$, to see that $\|D_\xi^\theta (|\xi|^{a-1}\chi)\|$ is finite.

Moreover, by setting $h(\xi,\eta)=|\xi|^{a-1}(1-\chi(\xi,\eta))$ it follows that $h,\p_\xi h \in L^\infty_{\eta \xi}$. Thus, from \eqref{Leib} and \eqref{Lei},
\begin{equation*}
\begin{split}\label{M2}
E_2 \lesssim \|\Dt h\|_{L^\infty_{\xi\eta}}\|\ha\|+\|h\|_{L^\infty_{\xi\eta}}\|\Dt \ha\|
\lesssim \|\phi\|+\||x|^\theta \phi\|,
\end{split}
\end{equation*}
which then gives that
$$
E\lesssim \|\lanx^{r_1}\phi\|+\|\lany^{r_2}\phi\|.
$$
Collecting the above estimates we finally conclude 
\begin{equation}\label{C6}
C_6\lesssim \|J_x^{(1+a)(3+\theta)}\phi\|+\|J_y^{2(3+\theta)}\phi\|+\|\lanx^{r_1}\phi\|+\|\lany^{r_2}\phi\|.
\end{equation}
 Next we estimate $C_7$. First we write 
 \begin{equation*}
 \begin{split}\label{c7}
 C_7&=\|\Dt(\psi \si|\xi|^{1+2a} \ha)\|\\
 &\leq \|\Dt(\chi \psi \si|\xi|^{1+2a}\ha)\|+\|\Dt((1-\chi) \psi \si|\xi|^{1+2a}\ha)\|\\
 &=:C_{7,1}+C_{7,2}.
 \end{split}
 \end{equation*}
In view of Lemma  \ref{Dchip} we promptly obtain
 \begin{equation}\label{C71}
 \begin{split}
 C_{7,1}\lesssim \|\phi\|+\||x|^\theta \phi\|.
 \end{split}
 \end{equation}
In addition, using interpolation and the definition of the function $\psi$,
 \begin{equation*}
 \begin{split}
 C_{7,2}&\lesssim  \|(1-\chi)\psi \si |\xi|^{1+2a}\ha\|+\|\p_\xi ((1-\chi) \psi \si|\xi|^{1+2a}\ha)\|\\
 &\lesssim \|D_x^{1+2a}\phi\|+\|\p_\xi \chi  \psi |\xi|^{1+2a}\ha\|+\|t(\eta^2-(2+a)|\xi|^{1+a})(1-\chi)|\xi|^{1+2a}\ha\|\\
 &\quad+\|(1-\chi)\psi |\xi|^{2a} \ha\|+ \|(1-\chi)\psi |\xi|^{1+2a}\p_\xi \ha\|\\
 &\lesssim_t \|D_x^{1+2a}\phi\|+\|\phi\|+\|D_y^2 D_x^{1+2a}\phi\|+\|D_x^{2+3a}\phi\|+\|J_x^{2a}\phi\|+\|J_\xi(\lan^{1+2a}\ha)\|.
 \end{split}
 \end{equation*}
In view of Lemma \ref{interx} and Young's inequality, 
 \begin{equation*}
 \begin{split}
 \|J_\xi(\lan^{1+2a}\ha)\|\lesssim \|J_\xi^{3+\theta}\ha\|+\|\lan^{\frac{(3+\theta)(1+2a)}{2+\theta}}\ha\|\lesssim \|J_x^{(1+a)(3+\theta)}\phi\|+\|\lanx^{3+\theta}\phi\|,
 \end{split}
 \end{equation*}
 and
 \begin{equation*}
 \begin{split}
 \|D_y^2 D_x^{1+a}\phi\|\lesssim \|D_y^{2(3+\theta)}\phi\|+\|D_x^{(1+2a)\frac{(3+\theta)}{2+\theta}}\phi\|\lesssim \|J_y^{2(3+\theta)}\phi\|+\|J_x^{(1+a)(3+\theta)}\phi\|,
 \end{split}
 \end{equation*}
 which implies
 \begin{equation}\label{C72}
 C_{7,2}\lesssim \|J_y^{2(3+\theta)}\phi\|+\|J_x^{(1+a)(3+\theta)}\phi\|+\|\lanx^{3+\theta}\phi\|.
 \end{equation}
 From \eqref{C71} and \eqref{C72}, we infer
 \begin{equation}\label{C7}
 \begin{split}
 C_7\lesssim_{T} \|J_y^{2(3+\theta)}\phi\|+\|J_x^{(1+a)(3+\theta)}\phi\|+\|\lanx^{3+\theta}\phi\|.
 \end{split}
 \end{equation}

Finally, from \eqref{D3s}, \eqref{allCs}, \eqref{C6}, and \eqref{C7}, we have
$$
\||x|^{3+\theta}U(t)\phi\|\lesssim_{T} \|J_x^{(1+a)(3+\theta)}\phi\|+\|J_y^{2(3+\theta)}\phi\|+\||x|^{r_1}\phi\|+\||y|^{r_2}\phi\|.
$$
As in \eqref{psi2} this last inequality is enough to apply Gronwall's  inequality and obtain the desired.\\

\noindent {\bf Part 4):} $r_1\in [5/2+a,7/2+a)$, $r_2>3$. Let us prove the persistence in $L^2_{r_1,0}$. We will divide into the cases $a\in(1/2,1)$ and $a\in(0,1/2]$ again.\\

\noindent {\bf Case a)} $a\in (1/2,1)$. Let us first suppose $3<r_1<4$ and write $r_1=3+\theta$, where $\theta\in [a-1/2,1)$. By using \eqref{F3} we obtain inequality \eqref{D3s}. Except for $C_6$ all other terms are estimated as in Part 3). So, what is left is to estimate $C_6$. At this point the assumption $\hat{\phi}(0,\eta)=0$ plays a crucial role. Indeed, 
Lemma \ref{DF} implies that
\begin{equation}
\begin{split}\label{c61}
C_6\lesssim \rho(t)\big(\|D^{a-1}\phi\|+\|D_y^{2\theta}D^{a-1}\phi\|+\|D^{(1+a)\theta}D^{a-1}\phi\|\big)+\underbrace{\||x|^\theta D^{a-1}\phi\|}_{E}.
\end{split}
\end{equation}
Following the same strategy as in Case b) of Part 3) we only need to estimate the term $E$. We split
\begin{equation}
\begin{split}\label{c6M1}
E=\|D^{\theta}_\xi (|\xi|^{a-1}\hat{\phi})\|
                           \leq \|D^{\theta}_\xi (|\xi|^{a-1}\chi\hat{\phi})\|+\|D^{\theta}_\xi (|\xi|^{a-1} (1-\chi)\hat{\phi})\|
                           =: E_1+E_2.
\end{split}
\end{equation}
For $E_2$ we follow the ideas above to conclude that $E_2 \lesssim \|\phi\|+\||x|^\theta \phi\|$. So we only need to take care of $E_1$. Here we cannot use the same strategy as in Case b) of Part 3) because in that case we strongly used that $\theta<a-1/2$. The idea here is to use the assumption $\hat{\phi}(0,\eta)=0$ and Taylor's theorem with integral remainder to write
\begin{equation}\label{taylor}
\hat{\phi}(\xi,\eta)=\xi \partial_\xi \hat{\phi}(0,\eta)+\int_0^\xi (\xi-\zeta)\partial_\xi ^2 \hat{\phi}(\zeta,\eta)d\zeta.
\end{equation}
Thus
\begin{equation*}
\begin{split}
E_1&\leq \|D_\xi^\theta (|\xi|^{a}\sgn(\xi)\chi\partial_\xi \hat{\phi}(0,\eta))\|+\Big\|D_\xi^\theta\Big(\underbrace{|\xi|^{a-1}\chi\int_0^\xi (\xi-\zeta)\partial_\xi ^2 \hat{\phi}(\zeta,\eta)d\zeta}_{N}\Big)\Big\|\\
&\leq \|\p_\xi \ha\|_{L^\infty_{\xi \eta}}\|D_\xi^\theta (|\xi|^{a}\sgn(\xi)\chi)\|+\|D_\xi^\theta N\|.
\end{split}
\end{equation*}
Since $\theta<a+1/2$, Proposition \ref{Dstein} and Stein derivative give that $\|D^\theta_\xi (|\xi|^a \sgn(\xi) \chi)\|$ is finite. By using the interpolation estimate $\|D_\xi^\theta N\|\leq \|N\|^{1-\theta}\|\p_\xi N\|^\theta$, we estimate
\begin{equation*}
\begin{split}%\label{N}
\|N\|
                            &\leq \|\partial_\xi^2 \hat{\phi}\|_{L^{\infty}_{\xi\eta}} \left\| |\xi|^{a-1}\chi\int_0^\xi (\xi-\zeta)d\zeta\right \|\\
                            &\lesssim \|\partial_\xi^2 \hat{\phi}\|_{L^{\infty}_{\xi\eta}} \||\xi|^{a+1}\chi \|\\
                            &\lesssim  \|\partial_\xi^2 \hat{\phi}\|_{L^\infty_{\xi \eta}}\\
                            &\lesssim \|\lanx^{r_1}\phi\|+\|\langle y \rangle^{r_2}\phi\|,
\end{split}
\end{equation*}
where in the last inequality we used Sobolev's embedding and that $r_1,r_2>3$.
Also,
\begin{equation*}
\begin{split}%\label{pN}
\|\partial_\xi N\|&\leq \Big\| \partial_\xi \chi|\xi|^{a-1}\int_0^\xi (\xi-\zeta)\partial_\xi^2 \hat{\phi}(\zeta,\eta)d\zeta \Big\|+
                     \Big\|\chi \sgn(\xi) |\xi|^{a-2}\int_0^\xi (\xi-\zeta)\partial_\xi^2 \hat{\phi}(\zeta,\eta)d\zeta \Big\|+\\
                    &\quad +\Big\|\chi |\xi|^{a-1}\int_0^\xi \partial_\xi^2 \hat{\phi}(\zeta,\eta)d\zeta \Big\|\\
                            &\leq \Big(\| \p_\xi\chi|\xi|^{a-1}\xi^2 \|+\|\chi \sgn(\xi)|\xi|^{a-2}\xi^2\|+\|\chi |\xi|^{a-1}|\xi|\|\Big)\|\partial_\xi^2 \hat{\phi}\|_{L^{\infty}_{\xi\eta}}\\
                            &\lesssim  \|\partial_\xi^2 \hat{\phi}\|_{L^{\infty}_{\xi\eta}}\\
                            &\lesssim \|\lanx^{r_1}\phi\|+\|\langle y \rangle^{r_2}\phi\|.
\end{split}
\end{equation*}
Consequently,
\begin{equation*}
\begin{split}
E\lesssim \|\lanx^{r_1}\phi\|+\|\langle y \rangle^{r_2}\phi\|.
\end{split}
\end{equation*}
 and 
\begin{equation*}
\begin{split}
C_6 \lesssim  \|J_x^{(1+a)(3+\theta)}\phi\|+\|J_y^{2(3+\theta)}\phi\|+\|\lanx^{r_1}\phi\|+\|\lany^{r_2}\phi\|.
\end{split}
\end{equation*}
Therefore, also here we obtain the estimate
\begin{equation*}
\begin{split}
\||x|^{3+\theta}U(t)\phi\|\lesssim_{T}& \|J_y^{2(3+\theta)}\phi\|+\|J_x^{(1+a)(3+\theta)}\phi\|+\|\lanx^{3+\theta}\phi\|+\|\lany^{r_2}\phi\|,
\end{split}
\end{equation*}
which is enough to conclude the desired.

Next we suppose $r_1=4$. In this case we get the inequality
\begin{equation}
\begin{split}\label{psir14}
\|x^4 U(t)\phi\|\lesssim_{a,\theta,T}& \|J_y^{8}\phi\|+\|J_x^{4(1+a)}\phi\|+\|\lanx^{4}\phi\|+\|\lany^{r_2}\phi\|. 
\end{split}
\end{equation}
Indeed, to obtain \eqref{psir14} we use identity \eqref{F5}. We will present the estimate  only for the terms $H_1$, $H_6$ and $H_{12}$ in \eqref{F5}. To deal with the terms $H_j$, $j\neq 1,6, 12$ it is enough to use Plancherel's identity, Young's inequality and Lemma \ref{interx}.  Using function $\chi$ we write
\begin{equation}
\begin{split}\label{H1}
\|H_1\|&\les \|\eta^2 |\xi|^{a-1}\ha\|\\
&\les \|\chi \eta^2 |\xi|^{a-1}\ha\|+\|(1-\chi)\eta^2 |\xi|^{a-1}\ha\|\\
&\les\|\chi\eta^2|\xi|^{a-1}\|\|\ha\|_{L^\infty_{\xi\eta}}+\Big \|\frac{1-\chi}{\xi}\Big\|_{L^\infty_{\xi \eta}}\|\eta^2 |\xi|^a \ha\|\\
&\les \La +\|\eta^4 \ha\|+\||\xi|^{2a}\ha\|\\
&\les \|J_y^{8}\phi\|+\|J_x^{4(1+a)}\phi\|+\|\lanx^{4}\phi\|+\|\lany^{r_2}\phi\|,
\end{split}
\end{equation}
where   we used the Sobolev's embedding and  assumption $a>1/2$ to conclude that $\chi\eta^2|\xi|^{a-1}\in L^2(\R^2)$.
Also using Taylor's formula \eqref{taylor}, we similarly obtain
\begin{equation}
\begin{split}\label{H6}
\|H_6\|&\les \||\xi|^{a-2}\ha\|\\
&\les\||\xi|^{a-2}\chi \ha\|+\||\xi|^{a-2}(1-\chi) \ha\|\\
&\les\|\chi |\xi|^{a-1}\p_\xi \ha(0,\eta)\|+\Big\||\xi|^{a-2}\chi \int_0^\xi (\xi-\zeta)\p_\xi^2\ha(\zeta,\eta)d\zeta\Big\|+\Big \|\frac{1-\chi}{\xi^2}\Big\|_{L^\infty_{\xi \eta}}\||\xi|^a \ha\|\\
&\les \|\chi |\xi|^{a-1}\| \|\p_\xi \ha\|_{L^\infty_{\xi \eta}}+\|\chi |\xi|^a\|\|\p_\xi^2\ha\|_{L^\infty_{\xi \eta}}+\|D_x^a \phi\|\\
&\les\La+\|J_x^a \phi\|.
\end{split}
\end{equation}
The term $H_{12}$ can be estimated as
\begin{equation}
\begin{split}\label{H14}
\|H_{12}\|&\les \||\xi|^{a-1}\chi \p_\xi \ha\|+\||\xi|^{a-1}(1-\chi) \p_\xi \ha\|\\
&\les\||\xi|^{a-1}\chi\|\|\p_\xi \ha\|_{L^\infty_{\xi \eta}}+\Big \|\frac{1-\chi}{\xi}\Big\|_{L^\infty_{\xi \eta}}\||\xi|^a \p_\xi \ha\|\\
&\les\|\lanx^{r_1}\phi\|+\|\lany^{r_2}\phi\|+\|J_x^{2a}\phi\|.
\end{split}
\end{equation}

Next we consider the case  $4<r_1 <7/2+a$. Here we write  $r_1=4+\theta$ with $\theta<a-1/2$. Thus, from \eqref{F5} and Lemma \ref{DF}, we can use the ideas employed above to estimate $\||x|^{4+\theta}U(t)\phi\|$. Since all estimates demand too many calculation involving  Plancherel's identity, Young's inequality and Lemma \ref{interx} we will estimate only the terms  $\|\dte (\psi |\xi|^{a-1} \p_\xi \ha)\|$ and $\|\dte (\psi \sgn(\xi)|\xi|^{a-2}\ha)\|$, which present estimates slightly different and whose counterparts in \eqref{F5} are given by  $H_{12}$ and $H_{6}$, respectively. 

Using Lemma \ref{DF} we have
\begin{equation}
\begin{split}\label{h15}
\|\dte (\psi |\xi|^{a-1} \p_\xi \ha)\|&\les  \rho(t)\Big(\|D_x^{a-1}(x\phi)\|+\|D_y^{2\theta}D_x^{a-1} (x\phi)\|+\|D_x^{(1+a)\theta}D_x^{a-1}(x\phi)\|\Big)\\
&\quad+\underbrace{\||x|^\theta D_x^{a-1}(x\phi)\|}_{D}.
\end{split}
\end{equation}
The only term that brings extra difficult in \eqref{h15} is $D$. Using function $\chi$, we split
\begin{equation*}
\begin{split}
D \les \|\dte (|\xi|^{a-1}\chi \p_\xi \ha)\|+\|\dte (|\xi|^{a-1}(1-\chi) \p_\xi \ha)\|=:D_{1,1}+D_{1,2}.
\end{split}
\end{equation*}
The estimate for $D_{1,2}$ is similar to that of $E_2$ in \eqref{c6M}. For $D_{1,1}$ we write
\begin{equation*}
\begin{split}
D_{1,1}\les \|\dte (\underbrace{|\xi|^{a-1}\chi (\p_\xi \ha(\xi,\eta)-\p_\xi \ha(0,\eta)}_{R})\|+\|\dte (|\xi|^{a-1}\chi \p_\xi \ha(0,\eta))\|
=:D_{1,1}^1+D_{1,1}^2.
\end{split}
\end{equation*}
For $D_{1,1}^1$ we will use the interpolation inequality $\|D_\xi^\theta R\|\leq \|R\|^{1-\theta}\|\p_\xi R\|^{\theta}$. But, from the mean value theorem and Sobolev's embedding we infer
\begin{equation}
\begin{split}\label{h151}
\|R\|\les \|\p_\xi^2 \ha\|_{L^\infty_{\xi \eta}}\||\xi|^a \chi\|\les \|\lanx^{r_1}\phi\|+\|\lany^{r_2}\phi\|
\end{split}
\end{equation}
and
\begin{equation}
\begin{split}\label{h152}
\|\p_\xi R\|&\les \Big\||\xi|^{a-1}\chi \frac{\p_\xi \ha(\xi,\eta)-\p_\xi\ha(0,\eta)}{|\xi|}\Big\|+\||\xi|^{a-1}\p_\xi \chi (\p_\xi \ha(\xi,\eta)-\p_\xi \ha(0,\eta))\|\\
&\quad+\||\xi|^{a-1}\chi \p_\xi^2 \ha\|\\
&\les \||\xi|^{a-1}\chi\|\big (\|\p_\xi \ha\|_{L^\infty_{\xi \eta}}+\|\p_\xi^2 \ha\|_{L^\infty_{\xi \eta}}\big)+\||\xi|^{a-1}\p_\xi \chi\|\|\p_\xi \ha\|_{L^\infty_{\xi \eta}}\\
&\les\|\lanx^{r_1}\phi\|+\|\lany^{r_2}\phi\|.
\end{split}
\end{equation}
From \eqref{h151} and \eqref{h152} we obtain $D_{1,1}^1\les \|\lanx^{r_1}\phi\|+\|\lany^{r_2}\phi\|$. Moreover, since by  Proposition \ref{DsteinL3} the quantity $\|\dte(|\xi|^{a-1}\chi)\|$ is finite, we have
\begin{equation*}
\begin{split}
D_{1,1}^2 \les \|\p_\xi \ha\|_{L^\infty_{\xi \eta}}\|\dte(|\xi|^{a-1}\chi)\|
\les  \|\p_\xi \ha\|_{L^\infty_{\xi \eta}}
\les  \|\lanx^{r_1}\phi\|+\|\lany^{r_2}\phi\|.
\end{split}
\end{equation*}

Next we estimate the term $\|\dte(\psi \sgn(\xi) |\xi|^{a-2}\ha)\|$. From Lemma \ref{DF},
\begin{equation*}
\begin{split}
\|\dte(\psi \sgn(\xi)|\xi|^{a-2}\ha)\|\les  \rho(t) (\|D_x^{a-2}\phi\|+\|D_y^{2\theta}D_x^{a-2}\phi\|+\|D_x^{(1+a)\theta}D_x^{a-2}\phi\|)+\||x|^\theta D_x^{a-2}\h\phi\|.
\end{split}
\end{equation*}
Let us estimate the last term by writing
\begin{equation*}
\begin{split}
\||x|^\theta D_x^{a-2}\h\phi\|\les \|\dte(\sgn(\xi)|\xi|^{a-2}\chi \ha)\|+\|\dte(\sgn(\xi)|\xi|^{a-2}(1-\chi)\ha)\|
=:E_1+E_2.
\end{split}
\end{equation*}
Estimate for $E_1$ may be performed by using Taylor's formula \eqref{taylor} and proceeding as above. For $E_2$, we use interpolation to obtain
\begin{equation*}
\begin{split}
E_2&\les \||\xi|^{a-2}(1-\chi)\ha\|+\|\p_\xi \big(\sgn(\xi)|\xi|^{a-2}(1-\chi)\ha\big)\|\\
&\les \Big\|\frac{1-\chi}{\xi^2} \Big\|_{L^\infty_{\xi \eta}}\||\xi|^a \ha\|+\Big\|\frac{1-\chi}{\xi^3}\Big\|_{L^\infty_{\xi \eta}} \||\xi|^a\ha\|+\\
&\quad+\||\xi|^{a-2}\p_\xi \chi\|_{L^\infty_{\xi \eta}}\| \ha\|+\Big\|\frac{1-\chi}{\xi^2}\Big\|_{L^\infty_{\xi \eta}} \||\xi|^a\p_\xi\ha\|\\
&\les \|\phi\|+\|D_x^a \phi\|+\|D_x^a(x\phi)\|\\
&\les \|J_x^2 \phi\|+\|\lanx^2 \phi\|.
\end{split}
\end{equation*}

After all estimates we arrive to the inequality
\begin{equation*}
\begin{split}\label{psi4}
\||x|^{4+\theta}U(t)\phi\|\lesssim_{a,\theta,T}& \|J_y^{2(4+\theta)}\phi\|+\|J_x^{(1+a)(4+\theta)}\phi\|+\|\lanx^{4+\theta}\phi\|+\|\lany^{r_2}\phi\|,
\end{split}
\end{equation*}
which is enough to our purpose again. This completes the proof in Case a).\\

\noindent {\bf Case b).} $0<a\leq 1/2$. Assume first  $a=1/2$. In this case we must have $r_1\in[3,4)$. The case $3<r_1<4$ was already treated in Case a). So we may assume $r_1=3$. Here the persistence follows from the inequality
\begin{equation}
\begin{split}\label{r1D3meio}
\|x^{3}U(t)\phi\|\lesssim_{a,\theta,T}& \|J_y^{6}\phi\|+\|J_x^{3(1+a)}\phi\|+\|\lanx^{3}\phi\|+\|\lany^{r_2}\phi\|.
\end{split}
\end{equation}
To obtain \eqref{r1D3meio} we  use identity \eqref{F3}. Estimates for the terms $G_j$, $j\neq 6$, follows as an application of Young's inequality, Plancherel's identity and Lemma \ref{interx}. The term $G_6$ is the only one that has a slightly different estimate. In fact, since that $\ha(0,\eta)=0$, for all $\eta\in \R$, from the mean value theorem we obtain   
\begin{equation}
\begin{split}\label{G6}
\|G_{6}\|\les \||\xi|^{-1/2}\hat \phi\|
\les \Big\|\frac{\ha(\xi,\eta) }{\xi}\Big\|_{L^\infty_{\xi\eta}}^{\frac{1}{2}}\|\hat\phi\|_{L^1_{\xi\eta}}^{\frac{1}{2}}
\les\|\p_\xi\ha\|_{L^\infty_{\xi\eta}}^{\frac{1}{2}}\|\hat\phi\|_{L^1_{\xi\eta}}^{\frac{1}{2}}.
\end{split}
\end{equation}
Since $\|\hat{\phi}\|_{L^1_{\xi\eta}}\lesssim \|\phi\|_{H^\sigma}$, where sigma $\sigma>1$ is arbitrary, we can use Sobolev's embedding and Young's inequality to obtain
$$
\|G_6\|\lesssim \|\lanx^{r_1}\phi\|+\|\lany^{r_2}\phi\|+\|\phi\|_{H^\sigma}.
$$
Thus, after all calculations we obtain a similar term as in \eqref{int33} with the additional term $\int_0^t\|\p_xu^2\|_{H^\sigma}d\tau$. However, by choosing $\sigma>1$ satisfying $1+\sigma\leq (1+a)s$ (this is always possible because $s>r_1>5/2$) we obtain
$$
\int_0^t\|\p_xu^2\|_{H^\sigma}d\tau\lesssim \int_0^T\|u\|_{H^\sigma}\|u\|_{H^{1+\sigma}}d\tau\lesssim M^2,
$$
where we used that $E^s\hookrightarrow H^{1+\sigma}$. Thus we still may apply Gronwall's lemma to conclude the result.

Assume now $0<a<1/2$. In this case, $r_1$ must range the interval $(5/2,4)$. The case $3\leq r_1<4$ has already been treated above. So, we may assume  $5/2+a<r_1<3$. The proof runs as in Part 2)(Case b). In fact, by setting $r_1=2+\theta$, with $1/2+a<\theta$, all terms in \eqref{D2} can be estimated as above, except the term $K_1=\|D^{\theta}_\xi (|\xi|^{a}\sgn(\xi)\chi \ha)\|$ in the decomposition of  $B_1$. Here it can be estimated as follows
\begin{equation*}
\begin{split}
K_1&=\|D^{\theta}_\xi (|\xi|^{a}\sgn(\xi)\chi\ha(\xi,\eta))\|\\
%&\leq \|D^{\theta}_\xi (|\xi|^{a}\sgn(\xi)\chi\ha(\xi,\eta)\|\\
&\les \||\xi|^{a}\sgn(\xi)\chi\ha(\xi,\eta))\|+\|\p_\xi (|\xi|^{a}\sgn(\xi)\chi\ha(\xi,\eta)))\|\\
&\les \|\phi\|+\Big\||\xi|^a \chi \frac{\ha}{\xi}\Big\|+\||\xi|^a \p_\xi \chi \ha \|+\||\xi|^a \chi \p_\xi \ha\|\\
&\les \|\phi\|+\||\xi|^a \chi\|\|\p_\xi \ha\|_{L^\infty_{\xi \eta}}+\||\xi|^a \p_\xi \chi\|_{L^\infty_{\xi \eta}}\|\phi\|+\||\xi|^a \chi\|_{L^\infty_{\xi \eta}}\|\p_\xi \ha\|\\
&\les \|\lanx^{r_1}\phi\|+\|\lany^{r_2}\phi\|,
\end{split}
\end{equation*}
where we used Sobolev's embedding and the facts that $\p_\xi \sgn(\xi)=2\delta_\xi$ and $\ha\delta_\xi=0$. Here, $\delta_\xi \in \mathcal{S}'(\R^2)$ is the delta function in the $\xi$-direction defined by
\begin{equation*}
\langle \delta_\xi, f \rangle=\int f(0,\eta)d\eta, \ \mbox{for all} \ f\in \mathcal{S}(\R^2).
\end{equation*}

The proof of Theorem \ref{anisogbozk} is thus completed.

%%%%%%%%%%%%%%%%%%%%%%%%%%%%%%%%%%%%%%%%%%%%%%%%%%%%%%%
%%%%%%%%%%%%%%%%%%%%%%%%%%%%%%%%%%%%%%%%%%%%%%%%%%%%%%%
%%%%%%%%%%%%%%%%%%%%%%%%%%%%%%%%%%%%%%%%%%%%%%%%%%%%%%%
%%%%%%%%%%%%%%%%%%%%%%%%%%%%%%%%%%%%%%%%%%%%%%%%%%%%%%%
\section{Unique continuation principles} \label{uniquep}

This section is devoted to establish Theorems \ref{P1} and \ref{UCP}. As we already said, we
follow closely the arguments in \cite{FLP1}, where the authors proved a similar result for the dispersion generalized BO equation. The main idea is
to explore the behavior of the gBO-ZK in the $x$-direction, which, in some
sense, is similar to one presented by the dispersion generalized BO equation.

\begin{proof}[Proof of Theorem \ref{P1}]
Let us start by recalling that the solution of \eqref{gbozk} is given by
\begin{equation}\label{121}
u(t)=U(t)\phi -\int_{0}^{t}U(t-\tau)z(\tau)d\tau,
\end{equation}
where $z=\frac12 \p_x u^2$.

Assume first $0<a<1/2$. By introducing the parameter $\alpha$ such that $5/2+a=2+\alpha$ we have $\alpha\in(0,1)$ and $2+\alpha<3$.
Without loss of generality we assume $t_1=0.$

By multiplying \eqref{121} by $|x|^{2+\alpha}$ and using Fourier transform, we deduce
\begin{equation}\label{121.1}
\tilde{\chi} D_{\xi}^{\alpha}\partial_{\xi}^2(\widehat{u(t)})=\tilde{\chi} D_{\xi}^{\alpha}\partial_{\xi}^2(\psi(\xi,\eta, t)\hat{\phi}) +\int_0^t \tilde{\chi} D_{\xi}^{\alpha}\partial_{\xi}^2(\psi(\xi,\eta, t-\tau)\hat{z})\,d\tau,
\end{equation}
where  $\tilde{\chi}=\varphi(\xi)e^{-\eta^2}$ and $\psi$ is given in \eqref{psidef}. In view of \eqref{F2},  we may write the linear part in \eqref{121.1} as follows
\begin{equation*}
\begin{split}
\tilde{\chi} D_{\xi}^{\alpha}\partial_{\xi}^2(\psi(\xi,\eta, t)\hat{\phi})&= [\varphi;D_{\xi}^{\alpha}]\partial_{\xi}^2(e^{-\eta^2}\psi(\xi,\eta,t) \hat{\phi})+D_\xi^{\alpha}(\varphi \p_\xi^2 (e^{-\eta^2}\psi(\xi,\eta, t) \hat{\phi}))\\
&=: A(\xi,\eta,t,\hat{\phi})+\tilde B_1(\xi,\eta,t,\hat{\phi})+\cdots+\tilde B_7(\xi,\eta,t,\hat{\phi})
\end{split}
\end{equation*}
where $\tilde{B}_j:=D^\alpha_\xi(\tilde{\chi}(\xi,\eta) F_j(\xi,\eta,t,\hat{\phi}))$, with $F_j$ given in \eqref{F2}.

\begin{claim}\label{claim1}
For all $t\in [0,T]$, we have $A,\tilde B_j\in L^2$, where $j=2,...,7$. 
\end{claim}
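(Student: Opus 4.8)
The plan is to show that each term $A$ and $\tilde B_j$ ($j=2,\dots,7$) belongs to $L^2(\R^2)$ by bounding it in terms of the norms $\|\phi\|_{H^{(1+a)s,2s}}$ and $\|\lanx^{5/2+a}\phi\|$, $\|\lany^{r_2}\phi\|$, all of which are finite by the hypotheses $u(t_j)\in \mathrm{Z}^s_{5/2+a,r_2}$, $s\ge 2$ and $r_1,r_2>2$. First I would treat $A=[\varphi;D_\xi^\alpha]\partial_\xi^2(e^{-\eta^2}\psi\ha)$: since $\varphi\in C_0^\infty$, Proposition \ref{C} (with $\alpha\in(0,1)$, $\Phi=\varphi\in H^2$) gives $\|A\|\lesssim \|\varphi\|_{H^2}\|\partial_\xi^2(e^{-\eta^2}\psi\ha)\|$, so it remains to check $\partial_\xi^2(e^{-\eta^2}\psi\ha)\in L^2$. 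Using \eqref{F2} and the Gaussian decay of $e^{-\eta^2}$ to absorb all polynomial powers of $\eta$, one reduces to finiteness of $\|\p_\xi^2\ha\|$, $\||\xi|^{1+a}\p_\xi\ha\|$, $\||\xi|^a\ha\|$, etc.; each of these is controlled via Plancherel, Lemma \ref{interx}, and the embedding $E^s\hookrightarrow H^{(1+a)s,2s}$ together with the weighted bounds on $\phi$ (here the most delicate term is $F_1$, involving $|\xi|^a$, but on the support of $\varphi$ this is bounded and $|\xi|^a\ha\in L^2$ trivially).

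Next I would handle the $\tilde B_j=D_\xi^\alpha(\ti F_j)$, $j=2,\dots,7$. For these I would invoke Lemma \ref{Dchip} with the substitution $\chi\rightsquigarrow\ti=\varphi(\xi)e^{-\eta^2}$, which the lemma explicitly permits. Indeed, each $F_j$ (for $j\ge 2$) is, up to time-dependent constants, of the form $\psi\,\sgn(\xi)^{\sigma_1}|\xi|^{\sigma_2}\eta^{\sigma_3}\ha$ with $\sigma_1\in\{0,1\}$, $\sigma_2\ge 1$, $\sigma_3\ge 0$ (the terms $F_2,F_3,F_4$), or $\psi\,\eta^{\sigma_3}\p_\xi\ha$ / $\psi|\xi|^{1+a}\p_\xi\ha$ / $\psi\,\p_\xi^2\ha$ (the terms $F_5,F_6,F_7$, after writing $\p_\xi\ha=-\widehat{ix\phi}$ and $\p_\xi^2\ha=-\widehat{x^2\phi}$). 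Applying \eqref{xi} or \eqref{eta} then yields $\|\tilde B_j\|\lesssim \|g\|+\||x|^\alpha g\|$ for the appropriate $g\in\{\phi,\, x\phi,\, x^2\phi\}$; since $\alpha<1$ and $2+\alpha=5/2+a$, the quantity $\||x|^\alpha x^2\phi\|=\||x|^{5/2+a}\phi\|$ is exactly the weighted norm assumed finite, and $\||x|^\alpha x\phi\|\le\||x|^{5/2+a}\phi\|+\|\phi\|$ by interpolation. The term $F_1=-t\,i(2+a)(1+a)\psi\,\sgn(\xi)|\xi|^a\ha$ is the only one with a \emph{negative-looking} power issue avoided — $|\xi|^a$ with $a>0$ — so it fits \eqref{xi} with $\sigma_2$ replaced by $a$; strictly $\sigma_2\ge 1$ is required there, but since $\varphi$ localizes $|\xi|\le 2$ we may simply bound $|\xi|^a\le 2^a$ and absorb it, reducing to \eqref{eta}.

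The main obstacle I anticipate is bookkeeping rather than any genuine analytic difficulty: one must verify that \emph{every} monomial appearing in the lengthy expansion \eqref{F2} either (i) has enough $\xi$-regularity to survive a $D_\xi^\alpha$ — handled by Lemma \ref{Dchip} — or (ii) comes multiplied by powers of $\eta$ that are harmless thanks to the Gaussian $e^{-\eta^2}$ factor inside $\ti$, which is what distinguishes this claim from the genuinely harder estimates in Section \ref{localweighted1} where no such Gaussian is available. Once the correspondence between the seven $F_j$'s and the applicable cases of Lemma \ref{Dchip} is set up, each bound is a one-line application, and summing them gives $\|A\|+\sum_{j=2}^7\|\tilde B_j\|\lesssim \|\phi\|_{E^s}+\|\lanx^{5/2+a}\phi\|<\infty$, which proves the claim.
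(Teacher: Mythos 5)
Your treatment of the terms actually covered by the claim coincides with the paper's: for $A$ you apply Proposition \ref{C} in the $\xi$-variable (then take the $L^2_\eta$ norm), which reduces matters to $\|\p_\xi^2(e^{-\eta^2}\psi\ha)\|$, and the Gaussian factor absorbs all powers of $\eta$ so that everything is controlled by $\|J_x^{2(1+a)}\phi\|+\|x^2\phi\|$; and each $\tilde B_j$, $j=2,\dots,7$, is a direct application of Lemma \ref{Dchip} (which indeed holds with $\ti$ in place of $\chi$) with $f\in\{\phi,\,x\phi,\,x^2\phi\}$, exactly as in the paper's sample estimates for $\tilde B_2$ and $\tilde B_7$. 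Your bookkeeping of which $F_j$ falls under \eqref{xi} and which under \eqref{eta}, and the bound $\||x|^{\alpha}x^2\phi\|=\||x|^{5/2+a}\phi\|$, are all in order, so the claim as stated is proved.

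However, your aside on $F_1$ is wrong, and it is worth understanding why. You cannot ``bound $|\xi|^a\le 2^a$ and absorb it'' before applying $\dt$: multiplication by the non-smooth symbol $\si|\xi|^a$ cannot be pulled outside a fractional derivative in $\xi$, so $\tilde B_1=\dt(\ti F_1)$ is not covered by \eqref{xi} or \eqref{eta}. In fact, by Proposition \ref{Dstein} (with exponent $a$ and order $\theta=\alpha=a+1/2$), $\Dta\big(|\xi|^{a}\si\,\va(\xi)\big)\notin L^2_\xi$, and consequently $\dt\big(\ti\,\si|\xi|^{a}\ha(0,\eta)\big)$ fails to be in $L^2$ unless $\ha(0,\eta)=0$. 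This is precisely why the claim deliberately excludes $j=1$: the paper isolates $\tilde B_1$, shows every other term is in $L^2$, and then uses the non-$L^2$ character of the piece carrying $\ha(0,\eta)$ to conclude $\hat u(0,\eta,t)=0$ in Theorem \ref{P1}. Your proof of the claim for $j=2,\dots,7$ is unaffected, but the parenthetical ``fix'' for $F_1$, if accepted, would make the unique-continuation argument collapse, so it should be deleted rather than repaired.
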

Indeed, using Proposition \ref{C} with $\Phi=\varphi$ and identity \eqref{F2} we obtain 
\begin{equation}
\begin{split}\label{Aphi}
\|A\|&=\|\|[\varphi;D_{\xi}^{\alpha}]\partial_{\xi}^2(e^{-\eta^2}\psi(\xi,\eta,t) \hat{\phi})\|_{L^2_\xi}\|_{L^2_\eta}\\ 
&\les\||\xi|^a \ha\|+\||\xi|^{2(1+a)}\ha\|+\|\eta^2 e^{-\eta^2}|\xi|^{1+a}\ha\|+\|\eta^4 e^{-\eta^2}\ha\|+\|\p_\xi \ha\|+\\
&\quad +\|\eta^2 e^{-\eta^2}|\xi|^{1+a}\p_\xi \ha\|+\|\p_\xi^2 \ha\|\\
&\les \|\lan^{2(1+a)}\ha\|+\|J_\xi(\lan^{1+a}\ha)\|+\|\p_\xi^2 \ha\|\\
&\les \|J_x^{2(1+a)}\phi\|+\|x^2 \phi\|,
\end{split}
\end{equation}
where  we also used that $\|\eta^{2k} e^{-\eta^2}\|_{L^\infty_\eta}\les 1$, $k=1,2$. The right-hand side of \eqref{Aphi} is finite because $\phi\in \mathrm{Z}^s_{\frac{5}{2}+a,r_2}$. Here, and in the inequalities to follow, the implicit constant may depend on $t$.

With respect to $\tilde B_j$ we  only deal,  for instance, with $\tilde B_2$ and $\tilde B_7$. The other terms can be estimated in a similar way. From \eqref{equiv} and \eqref{xi}, we obtain
\begin{equation*}
\begin{split}
\|\tilde B_2\|=\|\dt (\tilde{\chi} \psi |\xi|^{2(1+a)}\ha)\|
\lesssim \|\phi\|+\| |x|^\alpha\phi\|.
\end{split}
\end{equation*}
and
\begin{equation*}
\begin{split}
\|\tilde{B}_7\|=\|\dt (\ti \psi \p_\xi^2 \ha )\|
\lesssim \|x^2\phi\|+\||x|^{2+\alpha}\phi\|
\les\|\lanx^{5/2+a}\phi\|.
\end{split}
\end{equation*}
This finishes the proof of Claim \ref{claim1}.\\

%%%%%%%%%%%%%%%%%%%%
%%%%%%%%%%%%%%%%%%%%%

Using \eqref{F2} again, the integral part in \eqref{121.1} can be write as
\begin{equation*}
\begin{split}
\int_{0}^{t}\Bigg\{[\va;&D_{\xi}^{\alpha}]\p_\xi^2(e^{-\eta^2}\psi(\xi,\eta,t-\tau)\hat{z})+\sum_{j=1}^{7}D_{\xi}^{\alpha}\Big( \tilde{\chi}(\xi,\eta)F_j(\xi,\eta,t-\tau,\hat{z})  \Big)\Bigg\}d\tau\\
=:&\ \mathcal A+\mathcal B_1+\dots+\mathcal B_7.
\end{split}
\end{equation*}

\begin{claim}\label{claim2}
For any $t\in [0,T]$, we have $\mathcal A, \mathcal B_j \in L^2$, for $j=1,...,7$.
\end{claim}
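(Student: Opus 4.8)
The plan is to estimate $\mathcal A$ and $\mathcal B_1,\dots,\mathcal B_7$ in exact parallel with $A$ and $\tilde B_1,\dots,\tilde B_7$ of Claim \ref{claim1}, with $\hat z(\tau)$ in place of $\hat\phi$, and then to integrate the resulting bounds over $\tau\in[0,t]$. The structural fact that makes this work is that $z=\frac12\p_x u^2$, so $\hat z(\xi,\eta)=\frac{i\xi}{2}\widehat{u^2}(\xi,\eta)$ vanishes at $\xi=0$; moreover
\begin{equation*}
\p_\xi\hat z=\tfrac{i}{2}\widehat{u^2}+\tfrac{\xi}{2}\widehat{x u^2},\qquad \p_\xi^2\hat z=\widehat{x u^2}-\tfrac{i\xi}{2}\widehat{x^2 u^2}.
\end{equation*}
Substituting $\hat z$ into the formulas \eqref{F2} for the $F_j$ and using these identities, every summand of $\p_\xi^2(\psi\hat z)$ is of the form $\psi\,\si^{\sigma_1}|\xi|^{\sigma_2}\eta^{\sigma_3}\widehat{x^k u^2}$ with $k\in\{0,1,2\}$ and $\sigma_2\ge 1$, or of the form $\psi\,\eta^{\sigma_3}\widehat{x^k u^2}$; the borderline factor $\si|\xi|^a$ never appears alone. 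In particular, for $\mathcal B_1$ one has $\si|\xi|^a\hat z=\frac{i}{2}|\xi|^{1+a}\widehat{u^2}$, so that, since $\sigma_2=1+a\ge 1$, estimate \eqref{xi} of Lemma \ref{Dchip} (which still holds with $\ti$ in place of $\chi$) applies directly and gives $\|\mathcal B_1\|\les\int_0^t(\|u^2(\tau)\|+\||x|^\alpha u^2(\tau)\|)\,d\tau$. This is precisely the step that fails for $\tilde B_1$ — whence its absence from Claim \ref{claim1} — because $\mathcal D^\alpha(\si|\xi|^a\va)\notin L^2$ at the critical exponent $\alpha=a+\frac12$, by Proposition \ref{Dstein}. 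The remaining $\mathcal B_2,\dots,\mathcal B_7$ are handled verbatim as $\tilde B_2,\dots,\tilde B_7$ through \eqref{xi} and \eqref{eta}.

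For $\mathcal A$ I would apply Proposition \ref{C} with $\Phi=\va$ to bound, for each $\tau$,
\begin{equation*}
\big\|[\va;\dt]\p_\xi^2\big(e^{-\eta^2}\psi(\cdot,t-\tau)\hat z(\tau)\big)\big\|\les\big\|\p_\xi^2\big(e^{-\eta^2}\psi(\cdot,t-\tau)\hat z(\tau)\big)\big\|,
\end{equation*}
then expand $\p_\xi^2(e^{-\eta^2}\psi\hat z)=e^{-\eta^2}(F_1+\dots+F_7)$ with $\hat z$ in the $F_j$, and proceed exactly as in \eqref{Aphi}, reducing to plain and weighted $L^2$ norms of $u^2$, $x u^2$ and $x^2 u^2$ before integrating in $\tau$.

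What remains — and what I expect to be the only real obstacle — is to show that all the norms of $u^2$, $x u^2$, $x^2 u^2$ arising above are finite uniformly for $\tau\in[0,T]$, given $u\in C([0,T];\mathrm{Z}^s_{r_1,r_2})$. For the plain Sobolev factors this follows from the algebra and fractional-Leibniz estimates for products, together with $\sup_{[0,T]}\|u\|_{E^s}<\infty$ and $E^s\hookrightarrow L^\infty$. The weighted factors are more delicate, since the heaviest weight, $\||x|^{2+\alpha}u^2\|=\||x|^{5/2+a}u^2\|$ (coming from the $\p_\xi^2\hat z$ term), may exceed the decay $r_1$ of $u$, so one cannot simply extract $\|u\|_{L^\infty}$. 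Instead I would write $|x|^\beta u^2=(|x|^{\beta/2}u)^2$, observe that $\tfrac\beta2\le\tfrac54+\tfrac a2<2<r_1$ in the present regime $0<a<\tfrac12$ (where $\alpha=a+\tfrac12\in(0,1)$), invoke $H^{1/2}(\R^2)\hookrightarrow L^4(\R^2)$ to get $\||x|^{\beta/2}u\|^2_{L^4}\les\||x|^{\beta/2}u\|^2_{H^{1/2}}$, and bound the latter by Lemma \ref{interx} through $\|\lanx^{r_1}u\|$ and $\|J_x^\gamma u\|+\|J_y^\gamma u\|$ with $\gamma<2\le(1+a)s$; Young's inequality then yields a bound by $c\,\|u\|_{\mathrm{Z}^s_{r_1,r_2}}^2$, uniformly in $\tau$. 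Integrating over $\tau\in[0,t]$ completes the proof of the claim.
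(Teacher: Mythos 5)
Your proposal is correct and follows essentially the same route as the paper: Proposition \ref{C} handles the commutator term $\mathcal A$ (reducing to Sobolev and weighted norms of $uu_x$, then fractional Leibniz and H\"older), while the $\mathcal B_j$ are reduced via Lemma \ref{Dchip} — using precisely your observation that $\hat z=\tfrac{i\xi}{2}\widehat{u^2}$ upgrades $\sgn(\xi)|\xi|^{a}$ to $|\xi|^{1+a}$, which is why $\mathcal B_1$ is admissible although $\tilde B_1$ was excluded from Claim \ref{claim1} — to weighted norms of $u^2$ up to $\|\lanx^{2+\alpha}u^2\|$, integrated in $\tau$. The only divergence is minor and harmless: for that top-weight product the paper splits asymmetrically as $\|\lanx^{2}u\|\,\|\lanx^{\alpha}u\|_{L^\infty_{xy}}$ and invokes the embedding/interpolation bound \eqref{xalpha}, whereas you split symmetrically and use $H^{1/2}(\R^2)\hookrightarrow L^4(\R^2)$ together with Lemma \ref{interx}; both give the bound by $\|u\|_{\mathrm{Z}^s_{r_1,r_2}}^2$ under $r_1,r_2>2$.
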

In fact, we can proceed as in the proof of Claim \ref{claim1}. To estimate $\mathcal A$ it is enough to follow  \eqref{Aphi}, with $z$ instead of $\phi$ to obtain
\begin{equation}
\begin{split}\label{mathA}
\|[\va;D_{\xi}^{\alpha}]\p_\xi^2(e^{-\eta^2}\psi(\xi,\eta,t-\tau)\hat{z})\| \les & \|J_x^{2(1+a)}(uu_x)\|+\|\lanx^2 uu_x\|.
\end{split}
\end{equation} 
From fractional Kato-Ponce's inequality (see Remark 1.5 in \cite{dong}) and Sobolev's embedding
\begin{equation}
\begin{split}\label{1.5}
\|J_x^{2(1+a)}(uu_x)\|&\les \|uu_x\|+\|D_x^{2(1+a)}(uu_x)\|\\
&\les \|u\|_{L^\infty_{xy}}\|u_x\|+\|u_x\|_{L^\infty_{xy}}\|D_x^{1+2a}u_x\|+\|u_x\|_{L^\infty_{xy}}\|D_x^{2(1+a)}u\|\\
&\lesssim \|J_x^{2(1+a)} u\|^2\\
&\les  \|u\|_{E^s}^2.
\end{split}
\end{equation}
Also, from Holder's inequality and Sobolev's embedding,
\begin{equation}\label{ux}
\|\lanx^2 uu_x\|\les \|\lanx^2 u\|\|u_x\|_{L^\infty_{xy}}\les \|\lanx^2 u\|\|u\|_{E^s}.
\end{equation}
Thus, from \eqref{mathA}-\eqref{ux}, we obtain
\begin{equation}\label{uxx}
\begin{split}
\|\mathcal A\|\les \int_0^t \|u(\tau)\|_{\mathrm{Z}_{2,0}^s }^2 d\tau
\les_{T} \sup_{[0,T]}\|u\|_{\mathrm{Z}_{2,0}^s }^2.
\end{split}
\end{equation}
The right-hand side of \eqref{uxx} is finite taking into account that $u\in C([0,T]; \mathrm{Z}^s_{r_1,r_2})$.

Concerning the terms $\mathcal {B}_j$'s, we only deal with $\mathcal B_7$. The other terms can be estimated in an easier way. First note that from Sobolev's embedding and Lemma \ref{interx},
\begin{equation}
\begin{split}\label{xalpha}
\|\lanx^\alpha u\|_{L^\infty_{xy}}&\les \|J_x^{(1+a)s/2}(\lanx^\alpha u)\|+\|J_y^{s}(\lanx^\alpha u)\|\\
&\les \|J_x^{(1+a)s}u\|+\|J_y^{2s}u\|+\|\lanx^{2\alpha}u\|\\
&\les \|u\|_{\mathrm{Z}_{2,0}^s }.
\end{split}
\end{equation}
From \eqref{xi} and \eqref{xalpha} we get
\begin{equation*}
\begin{split}
\|\mathcal {B}_7\|&\les \int_0^t\Big(\|\Dta (\ti\psi(\xi,\eta, t-\tau)\p_\xi \widehat{u^2}))\|+\|\Dta (\ti\psi(\xi,\eta, t-\tau)\xi\p_\xi^2 \widehat{u^2})\|\Big)d\tau\\
&\les \int_0^t\big(\|x u^2\|+\||x|^\alpha xu^2\|+\|x^2 u^2\|+\||x|^\alpha x^2u^2\|\big)d \tau\\
&\lesssim\int_0^t \|\lanx^{2+\alpha}u^2\|d \tau\\
 &\les\int_0^t\|\lanx^2 u\|\|\lanx^\alpha u\|_{L^\infty_{xy}}d \tau\\
 &\les\int_0^t\|u\|_{\mathrm{Z}_{2,0}^s }^2d \tau.
\end{split}
\end{equation*}
As in \eqref{uxx} we obtain the desired. This finishes the proof of Claim \ref{claim2}.\\

%%%%%%%%%%%%%%%%%%%%%%%%%

Note that in Claim \ref{claim1} we do not estimate the term $\tilde{B}_1$. Actually, this term allow us to obtain the result. First note we can write
\begin{equation}\label{media0}
\begin{split}
\tilde{\chi}F_1&=\ tc_a|\xi|^a \sgn(\xi)\psi(\xi,\eta,t)\tilde{\chi}(\ha(\xi,\eta)-\ha(0,\eta))+tc_a|\xi|^a \sgn(\xi)\psi(\xi,\eta,t)\tilde{\chi}\ha(0,\eta)  \\
      & =:F_{1,1}+F_{1,2},
\end{split}
\end{equation}with $c_a=-i(1+a)(2+a)$. We claim that $\|D_\xi^{\alpha}(F_{1,1})\|$ is finite. Note that interpolation (in the $\xi$-variable) and Young's inequality give
\begin{equation}\label{kkkk}
\begin{split}
\|D_\xi^{\alpha}(F_{1,1})\|=\| \|D_\xi^{\alpha}(F_{1,1})\|_{L^2_{\xi}}\|_{L^2_{\eta}}
 \lesssim \| \|F_{1,1}\|_{L^2_{\xi}}^{1-\alpha} \|\partial_{\xi}F_{1,1}\|_{L^2_{\xi}}^{\alpha}\|_{L^2_{\eta}}
\lesssim \|F_{1,1}\|+\|\partial_{\xi}F_{1,1}\|.
\end{split}
\end{equation}
Thus, it suffices to show that the right-hand side of the last inequality is finite. It is easy to check that $F_{1,1}\in L^2$. In addition, by using the mean value theorem and Sobolev's embedding, we deduce
\begin{equation*}
\begin{split}
\|\p_\xi F_{1,1}\|&\les \Big \||\xi|^a \ti \frac{\hat \phi (\xi,\eta)-\hat \phi (0,\eta)}{\xi}\Big\|+\||\xi|^a \p_\xi \ti (\hat \phi (\xi,\eta)-\hat \phi (0,\eta))\|+\||\xi|^a \ti \p_\xi \hat \phi\|+\\
&\quad +\||\xi|^a \p_\xi \psi\ti (\hat \phi (\xi,\eta)-\hat \phi (0,\eta))\|\\
&\les  \||\xi|^a \ti\|\|\p_\xi \hat \phi\|_{L^\infty_{\xi \eta}}+\||\xi|^a \p_\xi \ti\|\|\hat \phi\|_{L^\infty_{\xi \eta}}+\||\xi|^a \ti\|\|\p_\xi \hat \phi\|_{L^\infty_{\xi \eta}}+\|\ti|\xi|^a \p_\xi \psi\|\|\ha\|_{\li}\\
&\les \|\p_\xi \hat \phi\|_{L^\infty_{\xi \eta}}+\|\hat \phi\|_{\li}\\
&\les \|\lanx^{r_1}\phi\|+\|\lany^{r_2}\phi\|.
\end{split}
\end{equation*}

Next, we write
\begin{equation}\label{desB5}
\begin{split}
F_{1,2}
&= tc_a (\psi(\xi,\eta,t) -1) |\xi|^{a}\sgn(\xi)\ha(0,\eta) \tilde{\chi}+tc_a|\xi|^a \sgn(\xi)\ha(0,\eta)\tilde{\chi}\\
&=: F_{1,2}^{1}+F_{1,2}^{2}.
\end{split}
\end{equation}
As above it is easy to check that  $\|F_{1,2}^{1}\|$ is finite. Therefore, putting $t=t_2$, from Claims \ref{claim1} and \ref{claim2} and our assumptions it must be the case that 
\begin{equation*}
 D^{\alpha}_{\xi}(F_{1,2}^{2})=D^{\alpha}_{\xi}\big( t_2 c_a \ha (0,\eta)e^{-\eta^2}|\xi|^a \sgn(\xi)\varphi(\xi)\big) \in L^2(\R^2).
\end{equation*}
Fubini's theorem and Theorem \ref{stein} imply that
\begin{equation}\label{teoP1e}
t_2 c_a e^{-\eta^2}\ha (0,\eta)\mathcal{D}^{\alpha}_{\xi}\big(|\xi|^a \sgn(\xi)\varphi (\xi)\big)\in L^{2}_{\xi}(\R),\quad \mathrm{a.e.} \ \eta \in \R.
\end{equation}
Taking into account that $\alpha=a+1/2$, an application of Proposition \ref{Dstein} yields
$$
\hat{\phi}(0,\eta)=0, \ \mathrm{a.e.} \quad  \eta \in \R.
$$
In view of \eqref{fourieru} the proof of the theorem is completed in this case.

The case $a\in [1/2,1)$ follows by writing $5/2+a=3+\alpha$, where $\alpha=a-1/2$ and applying similar ideas as above. In this case, instead of \eqref{F2} and Proposition \ref{Dstein}, identity \eqref{F3} and Proposition \ref{DsteinL2} must be used. This completes the proof of the theorem.
\end{proof}

\begin{proof}[Proof of Theorem \ref{UCP}]
First we deal with the case $a\in (1/2,1)$. Without loss of generality  we assume $t_1=0<t_2<t_3.$  By setting $\alpha=a-1/2$ it is seen that $4+\alpha=7/2+a$ with $\alpha\in (0,1/2)$. In addition, for any $r_1<7/2+a$ it follows that $u\in C \big([0,T];\mathrm{Z}^{s}_{r_1,r_2}\big)$.

Now multiplying \eqref{121} by $|x|^{7/2+a}$ and using Fourier transform  we may write
\begin{equation}
\begin{split}\label{intp}  
D_\xi^\alpha \p_\xi^4(\widehat{u(t)})&=D_\xi^\alpha \p_\xi^4(\psi(\xi,\eta,t)\ha)-\int_0^t D_\xi^\alpha \p_\xi^4(\psi(\xi,\eta,t-\tau)\hat{z}(\tau))d\tau,\\
\end{split}
\end{equation}
where, as before, $z=\frac12\partial_{x}u^{2}$. If $\chi$ is as in \eqref{chi}, then in view of \eqref{F5} we write the linear part of \eqref{intp} as
\begin{equation}
\begin{split}\label{chiD}
\chi D_{\xi}^{\alpha}\partial_{\xi}^4(\psi(\xi,\eta, t)\hat{\phi})&=\ [\va(\xi);D_{\xi}^{\alpha}]\partial_{\xi}^4(\psi(\xi,\eta,t) \va(\eta)\hat{\phi})+D_\xi^{\alpha}(\va(\xi) \p_\xi^4 (\psi(\xi,\eta, t) \va(\eta)\hat{\phi}))\\
&=:\ C+D_1+\cdots +D_{25},
\end{split}
\end{equation}
where $D_j:=D^\alpha_\xi(\chi(\xi,\eta) H_j(\xi,\eta,t,\hat{\phi}))$.

\begin{claim}\label{claim3}
For all $t\in [0,T]$ we have $C,D_j\in L^2$, where $j\in \{1,...,25\}$ and $j\neq 6,12$. 
\end{claim}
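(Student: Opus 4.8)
The plan is to follow the scheme of Claims \ref{claim1} and \ref{claim2}; the new ingredient is that the hypotheses of Theorem \ref{UCP} contain those of Theorem \ref{P1} (use the two times $t_1,t_2$, and $5/2+a<7/2+a$, $2<3<r_1,r_2$, $s\ge 4\ge 2$), so that $\hat u(0,\eta,t)\equiv 0$ and in particular $\hat\phi(0,\eta)=0$ for a.e.\ $\eta$ --- it is precisely this vanishing that makes the claim true. Throughout, implicit constants may depend on $t\in[0,T]$, and every bound is traced back to $\|D_x^{4(1+a)}\phi\|$ and to the decays $\||x|^{7/2+a}\phi\|$, $\||y|^{r_2}\phi\|$ (all finite since $s\ge 4$ and $r_1=7/2+a>4$) by means of Young's inequality, Plancherel's identity, Lemma \ref{interx} and Sobolev's embedding in the Fourier variables applied to $\hat\phi,\partial_\xi\hat\phi,\partial_\xi^2\hat\phi$ (this is where $r_1,r_2>3$ is used); the cutoffs $\varphi(\xi)$ and $\varphi(\eta)$ appearing in $\chi$ and in $C$ are exploited --- via \eqref{Leib} and Lemma \ref{Leibnitz} --- to render high-frequency contributions harmless, essentially as in Parts 3 and 4 of the proof of Theorem \ref{anisogbozk}.

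For the commutator term $C=[\varphi(\xi);D_\xi^\alpha]\partial_\xi^4(\psi\,\varphi(\eta)\hat\phi)$ I would apply Proposition \ref{C} with $\Phi=\varphi$, in the $\xi$-variable for a.e.\ fixed $\eta$ and then in $L^2_\eta$, as in \eqref{Aphi}; the key gain is that the commutator absorbs the fractional derivative, so it is enough to bound $\|\partial_\xi^4(\psi\,\varphi(\eta)\hat\phi)\|=\|\varphi(\eta)\sum_{j=1}^{25}H_j\|$, with no $\mathcal D^\alpha$ left. Since $\varphi(\eta)$ annihilates every power of $\eta$ and $s\ge 4$ controls the top $x$-derivative $|\xi|^{4(1+a)}\hat\phi$, the only delicate contributions come from negative powers of $|\xi|$: the symbols $|\xi|^{a-1}$ of $H_1$ and $H_{12}$ are square-integrable near $\xi=0$ precisely because $a>1/2$, and the symbol $|\xi|^{a-2}$ of $H_6$ becomes admissible once we use $\hat\phi(0,\eta)=0$ --- either through the mean value theorem, $|\xi|^{a-2}|\hat\phi(\xi,\eta)|\le|\xi|^{a-1}\|\partial_\xi\hat\phi(\cdot,\eta)\|_{L^\infty_\xi}$, or through Taylor's formula \eqref{taylor} --- which brings its low-frequency behaviour back to the $|\xi|^{a-1}$ case and hence again to $a>1/2$.

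For the terms $D_j=D_\xi^\alpha(\chi H_j)$ with $j\ne 6,12$: those $H_j$ carrying only non-negative powers of $|\xi|$ are dispatched directly with Lemma \ref{DF}, Lemma \ref{Dchip} --- in particular estimate \eqref{Da}, which also needs $a>1/2$, for the $|\xi|^a$-weighted ones --- together with Young's inequality and Lemma \ref{interx}, following essentially the corresponding computations in Parts 3 and 4 of the proof of Theorem \ref{anisogbozk}; and the one remaining term $D_1$, whose symbol is $\eta^2|\xi|^{a-1}$, is reduced to a non-borderline situation by writing $|\xi|^{a-1}\hat\phi=|\xi|^a\sgn(\xi)\,(\hat\phi(\xi,\eta)/\xi)$ and bounding $\hat\phi(\xi,\eta)/\xi$ by $\|\partial_\xi\hat\phi\|_{L^\infty_{\xi\eta}}$ via $\hat\phi(0,\eta)=0$, after which Lemma \ref{Leibnitz} and Proposition \ref{Dstein} (using $\alpha<a+1/2$) close the estimate. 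The main difficulty --- and the reason $H_6$ and $H_{12}$ are excluded from the claim --- is that $D_6$ and $D_{12}$ would require controlling $\mathcal D^\alpha$ of symbols that behave like $|\xi|^{a-1}\chi$ near the origin, which is exactly the endpoint governed by Propositions \ref{DsteinL2} and \ref{DsteinL3}: $\mathcal D^{\alpha}(|\xi|^{a-1}\chi)\notin L^2$ whereas $\mathcal D^{\alpha-\epsilon}(|\xi|^{a-1}\chi)\in L^2$ for small $\epsilon>0$. Controlling them needs the stronger cancellation $\partial_\xi\hat\phi(0,\eta)=0$, which is not available at this stage and is exactly the information that the third time $t_3$ in Theorem \ref{UCP} will provide; those two terms are therefore postponed and carry the unique-continuation mechanism.
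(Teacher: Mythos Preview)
Your proposal is correct and follows essentially the same approach as the paper: Proposition~\ref{C} absorbs the commutator so that $C$ reduces to bounding $\|\partial_\xi^4(\psi\varphi(\eta)\hat\phi)\|$, the negative powers $|\xi|^{a-1}$ are handled via $a>1/2$, the $|\xi|^{a-2}$ term via $\hat\phi(0,\eta)=0$, and the $D_j$'s with non-negative powers go through Lemma~\ref{Dchip}. The only tactical difference is in $D_1$: the paper uses the full second-order Taylor expansion \eqref{taylor} and the split $\psi=(\psi-1)+1$ together with interpolation, whereas you sketch a mean-value-theorem argument writing $|\xi|^{a-1}\hat\phi=|\xi|^a\sgn(\xi)(\hat\phi/\xi)$ and invoking Proposition~\ref{Dstein}; both routes exploit the same cancellation $\hat\phi(0,\eta)=0$ coming from Theorem~\ref{P1}, but the paper's decomposition separates the $\psi$-dependence more cleanly and avoids having to control $\mathcal D^\alpha_\xi(\hat\phi/\xi)$ directly. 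Your closing remark that $D_6,D_{12}$ ``need $\partial_\xi\hat\phi(0,\eta)=0$'' slightly mischaracterizes what happens next in the paper --- most of $D_6,D_{12}$ is shown to lie in $L^2$ via \eqref{taylor}, and only the residual pieces $D_{6,1}^2,D_{12}^{2,2}$ (proportional to $\partial_\xi\hat\phi(0,\eta)\cdot D_\xi^\alpha(|\xi|^{a-1}\chi)$) carry the unique-continuation mechanism --- but this is commentary beyond Claim~\ref{claim3} itself.
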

To prove the claim, in view of \eqref{F5} and Proposition \ref{C} we infer
\begin{equation}
\begin{split}\label{dsC}
\|C\|=&\,\|\|[\va(\xi);D_{\xi}^{\alpha}]\partial_{\xi}^4(\psi(\xi,\eta,t) \va(\eta) \hat{\phi})\|_{L^2_\xi}\|_{L^2_\eta}\\
\les&\, \|\eta^2\va(\eta)|\xi|^{a-1}\ha\|+\||\xi|^{2a}\ha\|+\|\eta^2\va(\eta)|\xi|^{1+2a}\ha\|+\||\xi|^{2+3a}\ha\|+\|\|\eta^4\va(\eta)|\xi|^{a}\ha\|+\\
&+\||\xi|^{a-2}\ha\|+\|\eta^4\va(\eta)|\xi|^{2(1+a)}\ha\|+\|\langle\eta\rangle^2\va(\eta)|\xi|^{3(1+a)}\ha\|+\||\xi|^{4(1+a)}\ha\|+\\
&+\|\eta^2\va(\eta)|\xi|^a \p_\xi \ha\|+\|\eta^6\va(\eta)|\xi|^{1+a}\ha\|+ \|\eta^8\va(\eta)\ha\|+\||\xi|^{a-1}\p_\xi \ha\|+\\
&+\|\eta^4\va(\eta)|\xi|^{1+a}\p_\xi \ha\|+\|\eta^2\va(\eta)|\xi|^{2(1+a)}\p_\xi \ha\|+\||\xi|^{3(1+a)}\p_\xi \ha\|+\|\eta^6\va(\eta)\p_\xi \ha\|+\\
&+\||\xi|^{2a+1}\p_\xi\ha\|+\||\xi|^a \p_\xi^2 \ha\|+\|\eta^4\va(\eta)\p_\xi^2 \ha\|+\||\xi|^{2(1+a)}\p_\xi^2 \ha\|+\|\eta^2\va(\eta)|\xi|^{1+a}\p_\xi^2 \ha\|+\\
&+\|\eta^2\va(\eta)\p_\xi^3 \ha\|+\||\xi|^{1+a}\p_\xi^3 \ha\|+\|\p_\xi^4\ha\|\\
\les&\, \||\xi|^{a-1}\ha\|+\||\xi|^{a-2}\ha\|+\|\lan^{4(1+a)}\ha\|+\||\xi|^{a-1}\p_\xi \ha\|+\|J_\xi(\lan^{3(1+a)}\ha)\|+\\
&+\|J_\xi^2(\lan^{2(1+a)}\ha)\|+\|J_\xi^3(\lan^{1+a}\ha)\|+\|\p_\xi^4\ha\|\\
\les&\, \underbrace{\||\xi|^{a-1}\ha\|}_{I}+\underbrace{\||\xi|^{a-2}\ha\|}_{J}+\underbrace{\||\xi|^{a-1}\p_\xi \ha\|}_{L}+\|\lan^{4(1+a)}\ha\|+\|J_\xi^4 \ha\|,
\end{split}
\end{equation}
where  we also used $\|\eta^{2k}\va(\eta)\|_{L^\infty}\les 1$, for $k=1,2,3,4$, and Lemma \ref{interx}.

To deal with terms $I$, $J$ and $L$ we may proceed as in \eqref{H1}, \eqref{H6} and \eqref{H14}, respectively, to obtain
\begin{equation}\label{IJL}
I,J,L\les\|\phi\|_{L^2_{r_1,r_2}}+\|J_x^{2a}\phi\|.
\end{equation} 
Thus, by \eqref{dsC} and \eqref{IJL}, 
\begin{equation}\label{IJL1}
\|C\|\les \|J_x^{4(1+a)}\phi\|+\|\lanx^4 \phi\|+\|\lany^{r_2}\phi\|.
\end{equation}
Since $\phi\in \mathrm{Z}_{7/2+a,r_2}^{s}$ we see that right-hand side of \eqref{IJL1} is finite. Next we deal with terms $D_j$. First, note that  Lemma \ref{Dchip} implies
\begin{equation}
\begin{split}\label{28}
\|D_{25}\|=\|\dt (\chi \psi \p_\xi^4 \ha )\|
\lesssim \|x^4\phi\|+\||x|^{4+\alpha}\phi\|
\les\|\lanx^{7/2+a}\phi\|.
\end{split}
\end{equation}
For the terms  $D_j$, $j\neq 1,5,14,19$, it is sufficient to follow an argument as in \eqref{28}. For  $D_5$, using \ref{Da} (with $\sigma_4=r_2$) we obtain
\begin{equation*}
\begin{split}
D_5 \lesssim\|\dt (\chi \psi \sgn(\xi) \eta^4 |\xi|^a  \ha)\|
\lesssim \|J_x^{2(1+a)}\phi\|+\|\lanx^2 \phi\|+\|\lany^{r_2}\phi\|.
\end{split}
\end{equation*}
To estimate $D_{16}$ and $D_{22}$ we use Lemma \ref{Dchip}, Plancherel's identity and Lemma \ref{interx}. In fact, by \eqref{Da} (with $\sigma_4=2$),
\begin{equation*}
\begin{split}
D_{14} &\les \|\dt (\chi \psi \eta^2 \sgn(\xi)|\xi|^a \p_\xi \ha)\|\\
&\lesssim \|J_x^{2(1+a)}(x\phi)\|+\|\lanx^2 x\phi\|+\|\lany^{2}x\phi\|\\
&\les\|J_\xi(\lan^{2(1+a)}\ha)\|+\|\lanx^3 \phi\|+\|\lany^3\phi\|\\
&\les\|J_\xi^2 \ha\|+\|\lan^{4(1+a)}\ha\|+\|\lanx^3 \phi\|+\|\lany^3\phi\|\\
&\les\|J_x^{4(1+a)}\phi\|+\|\lanx^3 \phi\|+\|\lany^3\phi\|.
\end{split}
\end{equation*}
Also, by \eqref{Da} (with $\sigma_4=3/2$),
\begin{equation*}
\begin{split}
D_{19} &\lesssim \|\dt (\chi\psi \sgn(\xi)|\xi|^a \p_\xi^2 \ha)\|\\
&\lesssim \|J_x^{2(1+a)}(x^2\phi)\|+\|\lanx^2 x^2\phi\|+\|\lany^{3/2}x^2\phi\|\\
&\les\|J_\xi^2(\lan^{2(1+a)}\ha)\|+\|\lanx^4 \phi\|+\|\lany^3\phi\|\\
&\les\|J_\xi^4 \ha\|+\|\lan^{4(1+a)}\ha\|+\|\lanx^4 \phi\|+\|\lany^3\phi\|\\
&\les\|J_x^{4(1+a)}\phi\|+\|\lanx^4 \phi\|+\|\lany^3\phi\|.
\end{split}
\end{equation*}
Finally, for $D_1$, our assumption and Theorem \ref{P1} imply that $\hat{\phi}(0,\eta)=0$. So, using \eqref{taylor} we obtain
\begin{equation}
\begin{split}\label{identQ}
D_1&=c_1\dt(\eta^2 \sgn(\xi)|\xi|^{a-1}\ha \chi \psi)\\
&= c_1\dt \Big(\eta^2 |\xi|^a \p_\xi \ha(0,\eta)\chi \psi\Big)+c_1\dt \Big(\underbrace{\eta^2 |\xi|^a \xi^{-1}\chi\psi\int_0^\xi (\xi-\zeta)\partial_\xi^2 \hat{\phi}(\zeta,\eta)d\zeta}_{Q}\Big)\\
&=:D_{1,1}+D_{1,2},
\end{split}
\end{equation}
where $c_1=4a(2+a)(1+a)t^2$. Now we write
\begin{equation*}
\begin{split}
D_{1,1}&=c_1 \dt(\underbrace{\eta^2 |\xi|^{a}\p_\xi\ha(0,\eta) \chi (\psi-1)}_{L})+c_1\dt(\eta^2 |\xi|^{a}\p_\xi\ha(0,\eta) \chi) \\
&=:D_{1,1}^1+D_{1,1}^2.
\end{split}
\end{equation*}
Recalling the standard inequality $|e^{ir}-1|\leq |r|$, for any $r\in\R$, we see that
\begin{equation}\label{phi}
|\psi-1|\leq t|\xi(\eta^2-|\xi|^{1+a})|.
\end{equation}
Thus using \eqref{phi} and Sobolev's embedding
\begin{equation*}
\begin{split}
\|L\|\lesssim \|\eta^2 |\xi|^a \p_\xi \ha(0,\eta)\chi(\psi-1)\|
\lesssim \|\eta^2 |\xi|^a \p_\xi \ha(0,\eta)\chi t \xi(\eta^2-|\xi|^{1+a})\|
\lesssim \|\p_\xi \ha\|_{L^\infty_{\xi\eta}}
\les\La,
\end{split}
\end{equation*}
and
\begin{equation*}
\begin{split}
\|\p_\xi L\|&\lesssim \|\eta^2 |\xi|^a \sgn(\xi) \p_\xi \ha(0,\eta)\chi\frac{\psi-1}{\xi}\|+\|\eta^2 |\xi|^a \p_\xi \ha \p_\xi \chi (\psi-1)\|\\
&\quad +\|\eta^2 |\xi|^a \p_\xi \ha(0,\eta)\chi t(\eta^2-(2+a)|\xi|^{1+a})\psi\|\\
&\lesssim \|\p_\xi \ha\|_{L^\infty_{\xi\eta}}\\
&\les \La.
\end{split}
\end{equation*}
Consequently, by using interpolation (see \eqref{kkkk}) we deduce that $D_{1,1}^1\in L^2$. On  the other hand, using \eqref{Dstein4},
\begin{equation*}
\begin{split}
\|D_{1,1}^2\|\lesssim \|\p_\xi \ha\|_{L^\infty_{\xi\eta}}\|\eta^2 \varphi(\eta)\|_{L^\infty_\eta}\|\dt (|\xi|^a \varphi(\xi))\|_{L^2_\xi}
\lesssim\|\p_\xi \ha\|_{L^\infty_{\xi\eta}}
\les \La.
\end{split}
\end{equation*}
This shows that $D_{1,1}\in L^2$. To see that $D_{1,2}$ also belongs to $L^2$, we note that
\begin{equation}
\begin{split}\label{N}
\|Q\|%&\leq \left\| |\xi|^{a}\xi^{-1}\eta^2 \psi\chi\int_0^\xi (\xi-\zeta)\partial_\xi^2 \hat{\phi}(\zeta,\eta)d\zeta \right\|\\
                            &\leq \|\partial_\xi^2 \hat{\phi}\|_{L^{\infty}_{\xi\eta}}\left\| |\xi|^a \xi^{-1}\chi\psi\int_0^\xi (\xi-\zeta)d\zeta\right \|\\
                            &\leq \|\partial_\xi^2 \hat{\phi}\|_{L^{\infty}_{\xi\eta}} \||\xi|^{a+1}\chi \|\\
                            &\lesssim  \|\partial_\xi^2 \hat{\phi}\|_{L^\infty_{\xi \eta}}\\
                            &\lesssim \La,
\end{split}
\end{equation}
and
\begin{equation}
\begin{split}\label{pN}
\|\partial_\xi Q\|&\leq \Big\| \partial_\xi \chi \psi|\xi|^{a-1}\int_0^\xi (\xi-\zeta)\partial_\xi^2 \hat{\phi}(\zeta,\eta)d\zeta \Big\|+
                     \Big\|\chi \psi \sgn(\xi) |\xi|^{a-2}\int_0^\xi (\xi-\zeta)\partial_\xi^2 \hat{\phi}(\zeta,\eta)d\zeta \Big\|+ \\
                    &\quad+\Big\|\chi \psi |\xi|^{a-1}\int_0^\xi \partial_\xi^2 \hat{\phi}(\zeta,\eta)d\zeta \Big\|+\Big\|  \chi \partial_\xi \psi|\xi|^{a-1}\int_0^\xi (\xi-\zeta)\partial_\xi^2 \hat{\phi}(\zeta,\eta)d\zeta \Big\|\\
                            &\leq \Big(\| \p_\xi\chi|\xi|^{a-1}\xi^2 \|+\|\chi \sgn(\xi)|\xi|^{a-2}\xi^2\|+\|\chi |\xi|^{a-1}|\xi|\|+\|\chi \p_\xi \psi |\xi|^{a+1}\|\Big)\|\partial_\xi^2 \hat{\phi}\|_{L^{\infty}_{\xi\eta}}\\
                            &\lesssim  \|\partial_\xi^2 \hat{\phi}\|_{L^{\infty}_{\xi\eta}}\\
                            &\lesssim \La.
\end{split}
\end{equation}
Interpolation then gives $D_{1,2}\in L^2$. Therefore $D_1 \in L^2$ and the proof of Claim \ref{claim3} is completed.\\

Next we analyze the integral  part of \eqref{intp}. By using \eqref{F5} we see that it can written as  
\begin{equation}
\begin{split}\label{D4} 
\int_{0}^{t}\Big\{[\chi;&D_{\xi}^{\alpha}]\p_\xi^4(\psi(\xi,\eta,t-\tau)\hat{z})+D_{\xi}^{\alpha}\big(\chi \p_\xi^4(\psi(\xi,\eta,t-\tau)\hat{z})\big)\Big\}d\tau\\
%=&\int_{0}^{t}\Big\{[\chi;D_{\xi}^{\alpha}]\p_\xi^4(\psi(\xi,\eta,t-\tau)\hat{z})+\sum_{j=1}^{28}%D_{\xi}^{\alpha}\big(\chi H_j(\xi,\eta,t-\tau,\hat{z})\big)\Big\}d\tau\\
&=\int_0^t[\chi;D_{\xi}^{\alpha}]\p_\xi^4(\psi(\xi,\eta,t-\tau)\hat{z})d\tau+\sum_{j=1}^{28}\int_0^t D_{\xi}^{\alpha}\big(\chi H_j(\xi,\eta,t-\tau,\hat{z})\big)d\tau\\
&=: \mathcal C+\mathcal D_1+\dots+\mathcal D_{25}.
\end{split}
\end{equation}

\begin{claim}\label{claim4}
For any $t\in [0,T]$, we have $\mathcal{C}, \mathcal{D}_j \in L^2$, for $j\in \{1,...,25\}$ and $j \neq 6,12$.
\end{claim}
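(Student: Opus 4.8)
The plan is to repeat the proof of Claim \ref{claim3} --- and, for the commutator piece, of Claim \ref{claim2} --- now with $\ha$ replaced by $\hat z=\tfrac12\widehat{\partial_x u^2}=\tfrac{i\xi}{2}\widehat{u^2}$ and with every resulting bound integrated over $\tau\in[0,t]$. What makes the time integration harmless is that, since $a>1/2$, one has $4<7/2+a$, so by the persistence already established $u\in C\big([0,T];\mathrm{Z}^s_{4,r_2}\big)$; in particular $M_0:=\sup_{[0,T]}\|u(t)\|_{\mathrm{Z}^s_{4,r_2}}<\infty$, and any estimate of the form $\|\cdot\|\lesssim_T\int_0^tP\big(\|u(\tau)\|_{\mathrm{Z}^s_{4,r_2}}\big)\,d\tau$ with $P$ a fixed polynomial will then be finite, exactly as in \eqref{uxx}.

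For $\mathcal C$, Proposition \ref{C} (applied in the $\xi$-variable with $\Phi=\varphi$) reduces the commutator, so that after Plancherel and \eqref{F5}
\[
\|\mathcal C\|\lesssim\int_0^t\sum_{j=1}^{25}\big\|\varphi(\eta)\,H_j(\xi,\eta,t-\tau,\hat z(\tau))\big\|\,d\tau,
\]
the localizing factor $\varphi(\eta)$ (inherited from $\chi$) absorbing all the $\eta$-powers of the $H_j$'s. Every factor of $H_j$ built from non-negative powers of $|\xi|$ --- and from $\partial_\xi\hat z,\partial_\xi^2\hat z$, which are $x$-moments of $\partial_x u^2$ --- is controlled by Plancherel, the Kato--Ponce inequality as in \eqref{1.5}, the algebra property of the anisotropic Sobolev spaces, Lemma \ref{interx} and Sobolev's embedding, just as in \eqref{Aphi}. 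The only terms deserving attention are those carrying the negative powers $|\xi|^{a-1},|\xi|^{a-2}$ inherited from $H_1,H_6,H_{12}$: the extra factor $\xi$ in $\hat z=\tfrac{i\xi}{2}\widehat{u^2}$ turns $|\xi|^{a-2}\hat z$ into $\tfrac12\,\si\,|\xi|^{a-1}\widehat{u^2}$ and $|\xi|^{a-1}\hat z$ into $\tfrac12\,\si\,|\xi|^{a}\widehat{u^2}$, and then --- \emph{crucially using $a>1/2$}, so that $|\xi|^{a-1}\varphi(\xi)\in L^2_\xi$ ---
\[
\big\||\xi|^{a-1}\widehat{u^2}\big\|\leq\big\||\xi|^{a-1}\varphi(\xi)\widehat{u^2}\big\|+\|u^2\|\lesssim\big\||\xi|^{a-1}\varphi(\xi)\big\|_{L^2_\xi}\sup_{\xi}\big\|\widehat{u^2}(\xi,\cdot)\big\|_{L^2_\eta}+\|u^2\|\lesssim M_0^2,
\]
where the middle bound used $\sup_\xi\|\widehat{u^2}(\xi,\cdot)\|_{L^2_\eta}\lesssim\|\lanx u\|\,\|u\|_{L^\infty_{xy}}$. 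Hence $\|\mathcal C\|<\infty$ for every $t\in[0,T]$.

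For $\mathcal D_j$ with $j\notin\{1,6,12\}$, these are $\int_0^tD^\alpha_\xi\big(\chi\,H_j(\xi,\eta,t-\tau,\hat z(\tau))\big)\,d\tau$; the pointwise-in-$\tau$ bound obtained in Claim \ref{claim3} for $D^\alpha_\xi(\chi H_j(\cdot,\ha))$ carries over with $\hat z$ in place of $\ha$ and is in fact \emph{easier}, since the factor $\xi$ in $\hat z$ raises every power of $|\xi|$ by one: all symbols become $|\xi|^{\sigma}$ with $\sigma\geq 1$, handled by \eqref{xi}--\eqref{eta} of Lemma \ref{Dchip}, except the single borderline symbol $|\xi|^{a}$ (coming from $H_1$ and from the $\widehat{u^2}$-part of $\partial_\xi\hat z$), handled by \eqref{Da}; Plancherel, Lemma \ref{interx} and Young's inequality take care of the remaining factors. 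Writing $u^2=u\cdot u$ and invoking the algebra property, together with $\|\lanx^{4+\alpha}u^2\|\leq\|\lanx^{4}u\|\,\|\lanx^{\alpha}u\|_{L^\infty_{xy}}\lesssim M_0^2$ (the first factor being finite precisely because $4<7/2+a$, and the second by \eqref{xalpha}), each such $\mathcal D_j$ is bounded by $\int_0^tP(\|u(\tau)\|_{\mathrm{Z}^s_{4,r_2}})\,d\tau<\infty$. The term $\mathcal D_1$ is the analogue of $D_1$ but simpler: because $\hat z$ vanishes linearly at $\xi=0$, its would-be singular symbol becomes $\eta^2|\xi|^{a-1}\hat z=\tfrac{i}{2}\eta^2\,\si\,|\xi|^{a}\widehat{u^2}$, which carries the everywhere-continuous weight $|\xi|^a$; so --- unlike for $D_1$, where the Taylor expansion \eqref{taylor} was needed --- no expansion is required and \eqref{Da} gives directly
\[
\|\mathcal D_1\|\lesssim\int_0^t\big(\|J_x^{2a}(u^2)\|+\|\lanx^2 u^2\|+\|\lany^{r_2}u^2\|\big)\,d\tau\lesssim_T M_0^2<\infty.
\]

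Finally, the terms $\mathcal D_6,\mathcal D_{12}$ are deliberately excluded: after the $\xi$-upgrade they still contain $\si|\xi|^{a-1}\widehat{u^2}$, whose $\alpha$-th Stein derivative near $\xi=0$ behaves like $\mathcal D^\alpha\big(|\xi|^{a-1}\varphi(\xi)\big)\,\widehat{u^2}(0,\eta)$, which --- by Proposition \ref{DsteinL2} with $\gamma=\alpha=a-1/2$ --- is \emph{not} in $L^2$; these are exactly the terms carrying the unique-continuation obstruction, to be exploited in the argument that follows Claim \ref{claim4}. The one genuinely delicate point in the proof above is therefore the observation that every negative power of $|\xi|$ occurring in $\mathcal C$ and in the admissible $\mathcal D_j$ is tamed by the $\xi$ coming from $\hat z=\tfrac{i\xi}{2}\widehat{u^2}$, the hypothesis $a>1/2$ being used both here and to guarantee $u\in C([0,T];\mathrm{Z}^s_{4,r_2})$; everything else is the routine, if lengthy, bookkeeping of weighted and high-regularity estimates for $u^2$.
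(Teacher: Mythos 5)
Your argument is correct and follows essentially the same route as the paper: the commutator $\mathcal C$ is reduced via Proposition \ref{Comu}'s analogue Proposition \ref{C} and bounded through Kato--Ponce, weighted and Sobolev estimates for $uu_x$ using $u\in C([0,T];\mathrm{Z}^s_{4,r_2})$ (valid since $4<7/2+a$), while the $\mathcal D_j$'s are handled exactly as in the paper by exploiting $\hat z=\tfrac{i\xi}{2}\widehat{u^2}$ to upgrade the negative powers of $|\xi|$, using \eqref{xi}--\eqref{Da} of Lemma \ref{Dchip}, \eqref{xalpha}, and the bound $\|\lanx^{4+\alpha}u^2\|\lesssim\|\lanx^4u\|\,\|\lanx^\alpha u\|_{L^\infty_{xy}}$. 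Your direct bound for $\||\xi|^{a-1}\widehat{u^2}\|$ via $\sup_\xi\|\widehat{u^2}(\xi,\cdot)\|_{L^2_\eta}$ is a harmless cosmetic variant of the paper's Sobolev-embedding argument, so no substantive difference remains.
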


The idea to prove the claim is similar to that in Claim \ref{claim2}. In fact, as in \eqref{dsC}, with $z=\frac12 \p_x u^2$ instead of $\phi$,
\begin{equation}
\begin{split}\label{math4}
\|[\va;D_{\xi}^{\alpha}]\p_\xi^4(\va(\eta)\psi(\xi,\eta,t-\tau)\hat{z})\|
\les \|J_x^{4(1+a)}(uu_x)\|+\|\lanx^4 uu_x\|+\|\lany^{r_2}uu_x\|.
\end{split}
\end{equation} 
By using  Remark 1.5 in \cite{dong} again, we deduce
\begin{equation}
\begin{split}\label{4a}
\|J_x^{4(1+a)}(uu_x)\|&\les \|uu_x\|+\|D_x^{4(1+a)}(uu_x)\|\\
&\les \|u\|_{L^\infty_{xy}}\|u_x\|+\|u_x\|_{L^\infty_{xy}}\|D_x^{3+4a}u_x\|+\|u_x\|_{L^\infty_{xy}}\|D_x^{4(1+a)}u\|\\
&\lesssim \|J_x^{4(1+a)} u\|^2\\
&\les  \|u\|_{\mathrm{Z}_{4,r_2}^s }^2.
\end{split}
\end{equation}
From Holder's inequality and Sobolev's embedding 
\begin{equation}\label{ux1}
\|\lanx^4 uu_x\|\les \|\lanx^4 u\|\|u_x\|_{L^\infty_{xy}}\les \|\lanx^4 u\|\|u\|_{E^s},
\end{equation}
and
\begin{equation}\label{uxr2}
\|\lany^{r_2}uu_x\|\les \|\lany^{r_2}u\|\|u_x\|_{L^\infty_{xy}}\les \|\lany^{r_2}u\|\|u\|_{E^s}.
\end{equation}
Then by \eqref{math4}--\eqref{uxr2}
\begin{equation*}
\begin{split}
\|\mathcal C\|\les\int_0^t \|u(\tau)\|_{\mathrm{Z}_{4,r_2}^s }^2 d\tau
 \les \sup_{[0,T]}\|u\|_{\mathrm{Z}_{4,r_2}^s }^2.
 \end{split}
\end{equation*}

With respect to $\mathcal D_j's$ we will only estimate  $\mathcal D_1$ and $\mathcal D_{25}$. The other terms can be treated as in Claim \ref{claim3}.
In view of \eqref{Da} (with $\sigma_4=2$),
\begin{equation*}
\begin{split}\label{D11}
\|\Dta(\chi \psi(\xi,\eta,t-\tau)\sgn(\xi)\eta^2 |\xi|^{a-1}\xi \widehat{u^2})\|&\les \|\Dta(\chi \psi(\xi,\eta,t-\tau)\eta^2 |\xi|^{a} \widehat{u^2})\|\\
&\les \|J_x^{2(1+a)}u^2\|+\|\lanx^2 u^2\|+\|\lany^2 u^2\|\\
&\les(\|J_x^{2(1+a)}u\|+\|\lanx^2 u\|+\|\lany^2 u\|)\|u\|_{L^\infty_{xy}}\\
&\les\|u\|_{\mathrm{Z}_{2,2}^s}^2,
\end{split}
\end{equation*}
where we also used the product estimate $\|J^\sigma_x(fg)\|\les \|f\|_\infty\|J^\sigma_xg\|+\|g\|_\infty\|J^\sigma_xf\|$, $\sigma>0$ (see, for instance Lemma X4 in \cite{KP} or Proposition 1.1 (page 105) in \cite{Taylor}).

Hence,
\begin{equation*}
\|\mathcal D_1\|\les \sup_{[0,T]}\|u\|_{\mathrm{Z}_{2,2}^s}^2.
\end{equation*}
Also, from \eqref{xi} and \eqref{xalpha},
\begin{equation*}
\begin{split}\label{29}
\|\Dta(\chi \psi \p_\xi^4 \hat z)\|&\les \|\Dta(\chi \psi \p_\xi^2 \widehat{ u^2})\|+\|\Dta(\chi \psi \p_\xi^3 \widehat{u^2})\|+\|\Dta(\chi \psi \xi \p_\xi^4 \widehat{u^2})\|\\
&\les \|x^2 u^2\|+\||x|^\alpha x^2 u^2\|+\|x^3 u^2\|+\||x|^\alpha x^3 u^2\|+\|x^4 u^2\|+\||x|^\alpha x^4 u^2\|\\
&\les \|\lanx^{4+\alpha}u^2\|\\
&\les\|\lanx^4 u\|\|\lanx^\alpha u\|_{L^\infty_{xy}}\\
&\les \|u\|_{\mathrm{Z}_{4,0}^s}^2,
\end{split}
\end{equation*}
implying that
\begin{equation*}
\|\mathcal D_{25}\|\les \sup_{[0,T]}\|u\|_{\mathrm{Z}_{4,0}^s}^2.
\end{equation*}
 This finishes the proof of Claim \ref{claim4}.\\

Next we will deal with terms $D_6$, $D_{12}$, $\mathcal{D}_6$ and $\mathcal{D}_{12}$. First, for  $c_6=-ia(a^2-1)(a+2)$, using \eqref{taylor} we write
\begin{equation}
\begin{split}\label{d6}
D_6&=c_6 t\dt(\sgn(\xi)|\xi|^{a-2}\ha \chi \psi)\\
&= c_6 t\dt \Big(|\xi|^{a-1} \chi\psi \p_\xi \ha(0,\eta)\Big)+c_6 t\dt \Big(\underbrace{\sgn(\xi)|\xi|^{a-2}\chi\psi\int_0^\xi (\xi-\zeta)\partial_\xi^2 \hat{\phi}(\zeta,\eta)d\zeta}_{R}\Big)\\
&=:D_{6,1}+D_{6,2},
\end{split}
\end{equation}
and decompose
\begin{equation}
\begin{split}\label{d62}
D_{6,1}&=tc_6 \dt(\underbrace{|\xi|^{a-1}\p_\xi\ha(0,\eta) \chi (\psi-1)}_{S})+tc_6 \dt(|\xi|^{a-1}\p_\xi\ha(0,\eta) \chi) \\
&=:D_{6,1}^1+D_{6,1}^2.
\end{split}
\end{equation}
Now, using \eqref{phi} and Sobolev's embedding we obtain
\begin{equation*}
\begin{split}\label{S}
\|S\|\lesssim t^2\||\xi|^{a} \chi(\eta^2-|\xi|^{1+a})\|_{L^\infty_{\xi\eta}}\|\p_\xi \ha\|
\lesssim \|x\phi\|,
\end{split}
\end{equation*}
and
%Also using $\p_\xi \psi=it(\eta^2-|\xi|^{1+a})\psi$
\begin{equation*}
\begin{split}\label{dS}
\|\p_\xi S\|&\leq\||\xi|^{a-2}\p_\xi \ha(0,\eta)\chi(\psi-1)\|+\||\xi|^{a-1}\p_\xi\ha(0,\eta)\p_\xi\chi(\psi-1)\|+
\||\xi|^{a-1}\p_\xi\ha(0,\eta)\chi\p_\xi\psi\|\\
&\lesssim \Big(\||\xi|^{a-1}\chi(\eta^2-|\xi|^{1+a})\|+\||\xi|^{a}(\eta^2-|\xi|^{1+a})\p_\xi\chi \|\Big)\|\p_\xi \ha\|_{L^\infty_{\xi\eta}}\\
%&+\||\xi|^{a-1}\sgn(\xi)\p_\xi \ha(0,\eta)\p_\xi \chi\psi\|\\
&\lesssim \|\p_\xi \ha\|_{L^\infty_{\xi\eta}}\\
&\les\La.
\end{split}
\end{equation*}
Hence interpolation gives that $D_{6,1}^1\in L^2$. By using similar arguments we obtain
\begin{equation}
\begin{split}\label{R}
\|R\|\leq \left\| |\xi|^{a-2}\psi \chi\int_0^\xi (\xi-\zeta)\partial_\xi^2 \hat{\phi}(\zeta,\eta)d\zeta\right\|
                            \leq  \|\partial_\xi^2 \hat{\phi}\|_{L^\infty_{\xi \eta}}\||\xi|^{a-2}\xi^2 \psi \chi\|
                            \lesssim \La,
\end{split}
\end{equation}
and
\begin{equation}
\begin{split}\label{pR}
\|\partial_\xi R\|&\leq \left\|\psi|\xi|^{a-3}\psi \chi\int_0^\xi (\xi-\zeta)\partial_\xi^2 \hat{\phi}(\zeta,\eta)d\zeta \right\|+
                     \left\||\xi|^{a-2}\psi\chi\int_0^\xi \partial_\xi^2 \hat{\phi}(\zeta,\eta)d\zeta\right\|+ \\
                    &\quad+\left\||\xi|^{a-2}\chi \p_\xi \psi\int_0^\xi (\xi-\zeta)\partial_\xi^2 \hat{\phi}(\zeta,\eta)d\zeta\right\|+\left\||\xi|^{a-2}\p_\xi\chi\psi\int_0^\xi (\xi-\zeta)\partial_\xi^2 \hat{\phi}(\zeta,\eta)d\zeta\right\|\\
                            &\leq \big(\||\xi|^{a-3}\xi^2 \chi \|+\||\xi|^{a}\p_\xi \psi \chi\|+\| |\xi|^{a}\p_\xi\chi\|\big)\|\partial_\xi^2 \hat{\phi}\|_{L^{\infty}_{\xi\eta}}\\
                            &\lesssim  \|\partial_\xi^2 \hat{\phi}\|_{L^{\infty}_{\xi\eta}}\\
                            &\lesssim \La,
\end{split}
\end{equation}
from which we also obtain $D_{6,2}\in L^2$.

For $D_{12}$, we first note that
\begin{equation}
\begin{split}\label{d14}
D_{12}&=tc_{12} \dt(|\xi|^{a-1}\p_\xi\ha \chi \psi)\\
&= tc_{12} \dt  \big(\underbrace{|\xi|^{a-1} \p_\xi \ha\chi(\psi-1)}_{W}\big)+tc_{12}\dt \big(|\xi|^{a-1}\p_\xi \ha \chi\big)\\
&=:D_{12}^1+D_{12}^2,
\end{split}
\end{equation}
where $c_{12}=-4ia(2+a)(1+a)$. But using \eqref{phi}
\begin{equation*}
\begin{split}
\|W\|\lesssim\||\xi|^{a-1} \p_\xi \ha\chi\xi(\eta^2-|\xi|^{1+a})\|
\lesssim\||\xi|^{a} \chi (\eta^2-|\xi|^{1+a})\|_{L^\infty_{\xi\eta}}\|\p_\xi \ha\|
\lesssim \|x \phi\|
\end{split}
\end{equation*}
and
\begin{equation*}
\begin{split}
\|\p_\xi W\|&\lesssim\||\xi|^{a-1}\p_\xi \ha\chi(\eta^2-|\xi|^{1+a})\|+\||\xi|^{a}\p_\xi^2 \ha\chi (\eta^2-|\xi|^{1+a})\|+\\
&\quad+\||\xi|^{a-1}\p_\xi \ha \chi\p_\xi\psi\|+\||\xi|^{a-1}\p_\xi\ha(\psi-1)\p_\xi \chi\|\\
&\lesssim \|\p_\xi \ha\|_{L^\infty_{\xi\eta}}+\|x^2\phi\|\\
&\les\La,
\end{split}
\end{equation*}
where we used that $a\in (1/2,1)$ to see that $|\xi|^{a-1}\in L^2_\xi$. Hence $D_{12}^1\in L^2$. 

We may also write
\begin{equation}
\begin{split}\label{d1322}
D_{12}^2&=tc_{12}\Big(\dt \big(\underbrace{|\xi|^{a-1}(\p_\xi \ha(\xi,\eta)-\p_\xi \ha(0,\eta)\chi}_{U} \big)+ \dt \big( |\xi|^{a-1}\p_\xi \ha(0,\eta)\chi\big)\Big)\\
&=:D_{12}^{2,1}+D_{12}^{2,2}.
\end{split}
\end{equation}
Then, following the arguments above,
\begin{equation}
\begin{split}\label{T}
\|U\|\leq \Big\| |\xi|^{a}\frac{\p_\xi \ha(\xi,\eta)-\p_\xi \ha(0,\eta)}{|\xi|}\chi\Big\|
                            \lesssim  \pphi \|\xi|^{a}\chi\|
                            \lesssim \pphi
                           \les\La,
\end{split}
\end{equation}
and
\begin{equation}
\begin{split}\label{pT}
\|\partial_\xi U\|&\leq \Big\||\xi|^{a-1} \frac{\p_\xi \ha(\xi,\eta)-\p_\xi \ha(0,\eta)}{|\xi|}\chi \Big\|+
                     \Big\||\xi|^{a-1}\partial_\xi^2 \hat{\phi}(\xi,\eta) \chi \Big\|+ \\
                    &\quad+\Big\||\xi|^{a-1}(\p_\xi \ha(\xi,\eta)-\p_\xi \ha(0,\eta)) \p_\xi\chi  \Big\|\\
                            &\leq \Big(\||\xi|^{a-1}\chi \|+\||\xi|^{a}\p_\xi\chi\|\Big)\|\partial_\xi^2 \hat{\phi}\|_{L^{\infty}_{\xi\eta}}\\
                            &\lesssim  \|\partial_\xi^2 \hat{\phi}\|_{L^{\infty}_{\xi\eta}},\\
                            &\lesssim \La.
\end{split}
\end{equation}
 Thus from \eqref{d1322}--\eqref{pT} and interpolation, it follows that $D_{12}^{2,1} \in L^2$.

From \eqref{D4} and proceeding similarly as in \eqref{d6} and \eqref{d14} we can write
\begin{equation*}
\mathcal D_{6}=\mathcal D_{6,1}^1+\mathcal D_{6,1}^2+\mathcal D_{6,2} \quad \mbox{and} \quad \mathcal D_{12}=\mathcal D_{12}^1+\mathcal D_{12}^{2,1}+\mathcal D_{12}^{2,2}.
\end{equation*}
Also, by using the above arguments, with $z$ instead of $\phi$ it is not difficult to  conclude that
 $$\mathcal D_{6,1}^1, \mathcal D_{6,2}, \mathcal D_{12}^1, \mathcal D_{12}^{2,1}\in L^2 .$$
Hence, putting $t=t_2$ and setting $\tilde{D}=D_{6,1}^2-\mathcal{D}_{6,1}^2+D_{12}^{2,2}-\mathcal{D}_{12}^{2,2}$, from \eqref{intp}, \eqref{chiD}, \eqref{D4}, Claims \ref{claim3} and \ref{claim4}, and gathering the information above, we obtain that
\begin{equation*}
D_{\xi}^{\alpha}\p_\xi^4 \hat{u}(\cdot,\cdot,t_2)\in L^2 (\R^2)
\end{equation*}
if and only if
\begin{equation}
\begin{split}\label{tD}
\tilde{D}&=c_6\Big(t_2\dt\Big(|\xi|^{a-1}\chi\p_\xi \ha(0,\eta)-|\xi|^{a-1}\chi\int_0^{t_2} (t_2-\tau)\p_\xi \hat{z}(0,\eta,\tau)d\tau \Big)\Big)\\
&\quad+c_{12}\Big(t_2\dt\Big(|\xi|^{a-1}\chi\p_\xi \ha(0,\eta)-|\xi|^{a-1}\chi\int_0^{t_2} (t_2-\tau)\p_\xi \hat{z}(0,\eta,\tau)d\tau\Big)
\Big)\\
&=(c_6+c_{12})\dt \Big(|\xi|^{a-1}\chi\Big(t_2\p_\xi \ha(0,\eta)-\int_0^{t_2} (t_2-\tau)\p_\xi \hat{z}(0,\eta,\tau\Big)d\tau\Big)\in L^2(\R^2).
\end{split}
\end{equation}

Now by using the definition of the Fourier transform and integration by parts we deduce
\begin{equation}\label{pxiz}
\p_\xi \hat z (0,\eta,\tau)=\frac{i}{2}\int e^{-i\eta y}u^2(x,y,\tau)dxdy.
\end{equation}
Also, from \eqref{gbozk}, it is easily seen that
\begin{equation}\label{dtgb}
\frac{d}{d\tau}\int xe^{-i\eta y}u(x,y,\tau)dxdy=\frac12\int e^{-i\eta y}u^2(x,y,\tau)dxdy, \quad \eta\in\R.
\end{equation}
By combining  \eqref{pxiz} and \eqref{dtgb}

\begin{equation}\label{dtxi}
\p_\xi \hat z (0,\eta,\tau)=i\frac{d}{d\tau}\int xe^{-i\eta y}u(x,y,\tau)dxdy.
\end{equation}
By the definition of the Fourier transform
\begin{equation}\label{pxip}
\p_\xi \ha(0,\eta)=-i\int xe^{-i\eta y}\phi(x,y)dxdy, \quad \mbox{for all} \quad \eta\in \R.
\end{equation}
Then, using \eqref{dtxi}, \eqref{pxip} and integrating by parts
%On the other hand by way similar to \cite{AP}
\begin{equation*}\label{t2pxi}
\begin{split}
t_2\p_\xi \ha(0,\eta)-\int_0^{t_2} (t_2-\tau)\p_\xi \hat{z}(0,\eta,\tau)d\tau&=t_2\p_\xi \ha(0,\eta)-i\int_0^{t_2}(t_2-\tau)\frac{d}{d\tau}\int xe^{-i\eta y}u(x,y,\tau)dxdyd\tau\\
&=-i\int_0^{t_2}\int x e^{-i\eta y}u(x,y,\tau)dxdyd\tau.
\end{split}
\end{equation*}
By replacing the last identity in \eqref{tD} we obtain
\begin{equation*}
\dt (|\xi|^{a-1}\chi) \int_{0}^{t_2}\int x e^{-i\eta y}u(x,y,\tau)dxdyd\tau \in L^2(\R^2).
\end{equation*}
Therefore from Fubini's theorem and \eqref{equiv} (recall that $a-1=\alpha-\frac{1}{2}$)
\begin{equation*}
\mathcal{D}_\xi^{\alpha}(|\xi|^{\alpha-1/2}\varphi) \int_{0}^{t_2}\int x e^{-i\eta y}u(x,y,\tau)dxdyd\tau \in L^2_\xi, \ \ \mbox{a.e.} \ \ \eta\in \R.
\end{equation*}

Thus from Proposition \ref{DsteinL2} we obtain
\begin{equation}\label{tauu}
\int_{0}^{t_2}\int x e^{-i\eta y}u(x,y,\tau)dxdyd\tau=0, \ \ \mbox{a.e.} \ \ \eta\in \R.
\end{equation}
This last identity allows us to obtain $\tau_1\in(0,t_2)$ such that
\begin{equation}\label{tau1}
\int x u(x,y,\tau_1)dxdy=0
\end{equation}
Performing a similar analysis we may also find $\tau_2\in(t_2,t_3)$ such that
\begin{equation}\label{tau2}
\int x u(x,y,\tau_2)dxdy=0
\end{equation}
Using \eqref{dtgb} (with $\eta=0$), \eqref{tau1}, \eqref{tau2} and the fact that the $L^2$ norm is a conserved quantity for \eqref{gbozk} we conclude that $\|\phi\|=0$, implying the desired. This finishes the proof of the theorem \ref{UCP} in the case $a\in(1/2,1)$.

If $a=1/2$ then $7/2+a=4$. Hence, using \eqref{F5} and following the same strategy as above we arrive to
\begin{equation*}
|\xi|^{-1/2}\varphi(\xi) \int_{0}^{t_2}\int x e^{-i\eta y}u(x,y,\tau)dxdyd\tau \in L^2_\xi, \ \ \mbox{a.e.} \ \ \eta\in \R.
\end{equation*}
Since $|\cdot|^{-1/2}\varphi(\cdot)\notin L^2$, we also obtain \eqref{tauu}.

Finally, if $a\in (0,1/2)$ we write $7/2+a=3+\alpha$ and use \eqref{F3} to obtain an expression similar to \eqref{intp}. After some calculations and the help of Proposition \ref{Dstein} we may also obtain \eqref{tauu}. Since it demands too many calculations following the arguments above we will omit the details. The proof of Theorem \ref{UCP} is thus completed. 
\end{proof}

\section*{Acknowledgment}

A.P. is partially supported by CNPq/Brazil grant  	303762/2019-5.

\end{document}